%

\documentclass[12pt, english]{article} 

\usepackage{color}
\usepackage[margin=0.5in]{geometry} 
\usepackage{graphicx} 
\usepackage{amsmath} 
\usepackage{amsfonts} 
\usepackage{amsthm} 
\usepackage{amssymb}
\usepackage{mathrsfs}
\usepackage{mathtools}
\usepackage{enumitem}
\usepackage{babel}
\usepackage{hyperref}
\newtheorem{thm}{Theorem}[section]
\newtheorem{lem}[thm]{Lemma}
\newtheorem{hyp}[thm]{Hypothesis}
\newtheorem{prop}[thm]{Proposition}

\newtheorem{remark}[thm]{Remark}
\newtheorem{example}[thm]{Example}
\newcommand{\de}{\mathrm{d}} 
\DeclareMathAlphabet{\mathpzc}{OT1}{pzc}{m}{it}
\newcommand{\avint}[1]{\mathop{\rlap{\tiny  ${\hspace{0.15em}{\diagup}}$}\!{\int_{#1}}}\nolimits}
\providecommand{\keywords}[1]{\textbf{\textit{Keywords: }}#1}


\title{Propagation of Chaos for Stochastic Spatially Structured Neuronal Networks with Delay driven by Jump Diffusions}

\author{Sima Mehri\footnotemark[1]\ \footnotemark[2]\ \footnotemark[3]\and Michael Scheutzow\footnotemark[1] \and Wilhelm Stannat\footnotemark[1]\and Bijan Z. Zangeneh\footnotemark[2]}


\begin{document}

\maketitle
\renewcommand{\thefootnote}{\fnsymbol{footnote}}
\footnotetext[1]{Institut f\"ur Mathematik, Technische Universit\"at Berlin, D-10623 Berlin, Germany}
\footnotetext[2]{Department of Mathematical Sciences, Sharif University of Technology, Tehran, Iran}
\footnotetext[3]{The work of this author was supported by the Hilda Geiringer Scholarship awarded by the Berlin Mathematical School}
\renewcommand{\thefootnote}{\arabic{footnote}}

\begin{abstract}
Spatially structured neural networks driven by jump diffusion noise with monotone coefficients, 
fully path dependent delay and with a disorder parameter are considered. Well-posedness for the 
associated McKean-Vlasov equation and a corresponding propagation of chaos result in the infinite 
population limit are proven. Our existence result for the McKean-Vlasov equation is based on the Euler approximation, that is applied to this type of equation for the first time.  
\end{abstract}

\keywords{Mean-field limits, McKean-Vlasov equations, propagation of chaos, spatially structured neural networks, monotone coefficients, fully path dependent delay.}


\bigskip

\section{Introduction and main results} 

The purpose of this paper is to prove a unified existence and uniqueness result for the 
McKean-Vlasov equation and a propagation of chaos result for a spatially structured coupled neural 
network of neural oscillators in the large population limit. Our mathematical framework covers all 
relevant modeling issues of networks of point neurons. In particular, we incorporate noise 
terms, both in the local dynamics of the neurons as well as in the synaptic transmission, in order to 
account for channel noise and synaptic noise in the neural dynamics. We also consider general 
delay terms modeling finite and variable propagation speed of neural signals. In order to cover 
all conductance-based neural oscillators and all types of delays being widely accepted in 
computational neuroscience, we consider stochastic delay differential 
equations with merely monotone coefficients. As stochastic forcing terms for 
the local dynamics and the synaptic transmission between individual neurons, we allow jump 
diffusions given as the independent sum of Brownian motions and Poisson processes.

We also incorporate spatial structure into our networks to take into account morphological 
properties of brain tissues. With a view towards spatial continuum limits, we consider 
the positions of the neurons as discrete subsets in a bounded subset $\Gamma\subset\mathbb R^k$, 
and introduce spatial dependence in the network dynamics in terms of an additional space 
parameter. We will then be in particular interested in the dynamical properties of the network in 
the infinite population limit, where the spatial distribution of the neurons is given in terms of 
a general Borel-measure on $\Gamma$.

With a view towards modeling brain networks, consisting of subpopulations of neurons, we also 
introduce a measurable partition of $\Gamma := \bigcup_{1\le \alpha\le P} \Gamma_\alpha $ and 
consider $\Gamma_\alpha$ as a given subpopulation. 

In order to incorporate variability in the neurons, and henceforth the associated neural dynamics, 
we finally introduce disorder in terms of a random parameter ${\omega^\prime}$. 

The above mentioned modeling issues lead to the following system of coupled delay-differential equations:  
\begin{equation}
\label{equ1}
\begin{split}
\de X^{r,\mathcal{A}_N}_t 
& = f(t,r,X_{t^-}^{r,\mathcal{A}_N},{\omega^\prime})\de t +g(t,r,X^{r,\mathcal{A}_N}_{t^-},{\omega^\prime})\de W^r_t 
+\int_{U}h(t,r,X^{r,\mathcal{A}_N}_{t^-},{\omega^\prime},\xi)\tilde{N}^r\left(\de t,\de \xi\right)\\ 
& \quad + \sum_{\alpha=1}^P\frac{1}{\mathcal{S}_{\mathcal{A}_N,\alpha}}\sum_{\tilde{r}\in \mathcal{A}_N\cap 
\Gamma_\alpha}\theta\left(t,r,\tilde{r},X^{r,\mathcal{A}_N}_{t^-},X^{\tilde{r},\mathcal{A}_N}_{(t-\tau)^-:{t^-}},
{\omega^\prime}\right)\de t\\ 
& \quad +\sum_{\alpha=1}^P\frac{1}{\mathcal{S}_{\mathcal{A}_N,\alpha}}\sum_{\tilde{r}\in \mathcal{A}_N\cap 
\Gamma_\alpha }\beta\left(t,r,\tilde{r},X^{r,\mathcal{A}_N}_{t^-}, X^{\tilde{r},\mathcal{A}_N}_{(t-\tau)^-:{t^-}},
{\omega^\prime}\right)\de B^{r,\alpha}_t\\ 
& \quad + \sum_{\alpha=1}^P\frac{1}{\mathcal{S}_{\mathcal{A}_N,\alpha}}\int_{U}\sum_{\tilde{r}\in \mathcal{A}_N
\cap \Gamma_\alpha }\eta\left(t,r,\tilde{r},X^{r,\mathcal{A}_N}_{t^-},X^{\tilde{r},\mathcal{A}_N}_{(t-\tau)^-:{t^-}},
{\omega^\prime},\xi\right)\tilde{N}^{r,\alpha}\left(\de t,\de \xi\right), \quad r\in \mathcal{A}_N
\\ 
X^{r,\mathcal{A}_N}_t & = z_t^r, \quad t\in [-\tau,0],\  r\in \mathcal{A}_N
\end{split}
\end{equation} 
for the time-evolution of the network. 

Here, $P$ is the number of different subpopulations, placed in space at disjoint measurable 
regions $\Gamma_\alpha$, $1\le \alpha\le P$, such that $\Gamma := \bigcup_{1\le\alpha\le P} 
\Gamma_\alpha $ is a bounded subset in $\mathbb R^k$. We suppose that the total number of neurons 
is $N$ and $\mathcal{A}_N\subset \Gamma$ denotes the position of neurons. In the equation \eqref{equ1}, the weight of neurons in subpopulation $\alpha$ is $1/\mathcal{S}_{\mathcal{A}_N,\alpha}$ with $\mathcal{S}_{\mathcal{A}_N,\alpha}\neq 0$. Finally, $X^{r,\mathcal{A}_N}_t\in \mathbb{R}^d$ 
denotes the state of neuron at position $r\in \mathcal{A}_N$ and at time $t\ge 0$.

The disorder ${\omega^\prime}$ is an element of a second probability space $\left(\Omega^\prime,\mathcal{F}^\prime, \mathbb{P}^\prime\right)$. We will denote with $\mathcal{E}$ the expectation with respect to $\mathbb{P}^\prime$.  

$\tau > 0$ is a fixed deterministic delay in the synaptic transmission between different neurons. 
For any path $y_t$ defined on some subinterval $I\subset \mathbb R$ we also use the following notation $y_{(t-\tau):t}$ for the path segment $y_s$, $s\in [t-\tau , t]$, for any $t$ with $[t-\tau , t]\subset I$. $W^r$ and $B^{r,\alpha}$, $r\in \mathcal{A}_N$, $1\le \alpha\le P$, are independent Brownian motions respectively in $\mathbb{R}^m$ and $\mathbb{R}^n$. 
$N^r$ and $N^{r,\alpha}$, $r\in \mathcal{A}_N$, $1\le \alpha\le P$, are independent time homogeneous Poisson measures on $[0,\infty)\times U$ with intensity measure 
$\de t \otimes \nu$, where $(U,\mathcal{U}, \nu)$ is an arbitrary $\sigma$-finite measure space.
$\tilde{N}^r = N^r - \de t \otimes \nu$ and $\tilde{N}^{r,\alpha}={N}^{r,\alpha}- \de t \otimes \nu$, 
$r\in \mathcal{A}_N$, $1\le \alpha\le P$, denote the associated compensated Poisson martingale 
measures.

The coefficients 
$$ 
\begin{aligned} 
f,g & : [0, \infty [\times \Gamma\times \mathbb{R}^d \times \Omega^\prime\to \mathbb{R}^d, \mathbb{R}^{d\times m} \\ 
h & : [0, \infty [\times \Gamma\times \mathbb{R}^d \times \Omega^\prime\times U\to \mathbb{R}^d \\ 
\theta,\beta & :  [0, \infty [\times \Gamma\times\Gamma\times\mathbb{R}^d \times \text{C\`agl\`ad}\,([-\tau,0];\mathbb{R}^d)\times \Omega^\prime \to \mathbb{R}^d, \mathbb R^{d\times n}\\
\eta & :  [0, \infty [\times \Gamma\times\Gamma\times\mathbb{R}^d \times \text{C\`agl\`ad}\,([-\tau,0];\mathbb{R}^d)\times \Omega^\prime\times U\to \mathbb{R}^d
\end{aligned} 
$$ 
are jointly measurable w.r.t. all variables and continuous w.r.t. $x\in\mathbb{R}^d$. Here, 
the space $\text{C\`adl\`ag}\,([-\tau,0];\mathbb{R}^d)$ is endowed with the supremum norm. 
We will specify appropriate monotonicity and growth conditions on the coefficients below (see 
Hypothesis \ref{hyp1}). 

The initial conditions $z^r$ for $r\in \Gamma_\alpha$
are independent and identically distributed copies of $\hat{z}^\alpha$ with $\hat{z}^\alpha\in 
L^2(\Omega,\mathbb{P};\text{C\`adl\`ag}\,([-\tau,0];\mathbb{R}^d))$. The space 
$\text{C\`agl\`ad}\,([-\tau,0];\mathbb{R}^d)$ being again endowed with the supremum norm.   
We assume that $\left\lbrace W^r\right\rbrace_{r\in \mathcal{A}_N}$, 
$\left\lbrace B^{r,\alpha}\right\rbrace_{\substack{r\in \mathcal{A}_N \\ 1\le \alpha\le P}}$, 
$\left\lbrace N^r\right\rbrace_{r\in \mathcal{A}_N}$, 
$\left\lbrace N^{r,\alpha}\right\rbrace_{\substack{r\in \mathcal{A}_N\\ 1\le \alpha\le P}}$ 
and the initial conditions $\left\lbrace z^r\right\rbrace_{r\in \mathcal{A}_N}$ 
are independent.

The reason for assuming the coefficients $\theta$, $\beta$ and $\eta$ as being defined on  
c\'agl\'ad-processes stems from our choice of an Euler approximation method \eqref{df Xn-} 
for the solution of \eqref{equ1}. We would like to mention that it is also possible to 
consider a c\`adl\`ag-version based on a different Euler approximation scheme.

We will prove in Theorem \ref{existunique} below for this general class of neural networks  
existence and uniqueness of a strong solution of the associated McKean-Vlasov equation, as well 
as a propagation of chaos result in Section \ref{SecPropagationChaos} under suitable assumptions on 
the spatial distribution of neurons.     


So far, the literature already contains a considerable amount of results concerning mean-field 
limits of interacting stochastic differential equations and also extensions to the delay case. 
However, a unified theory including all the above mentioned features of our model, monotonicity of 
the coefficients, general delay, jump diffusion forcing terms and spatial structure, is not 
available. 

Indeed, results under global Lipschitz assumptions have been obtained in 
\cite{quininao2015limits,touboul2014spatially,touboul2014propagation}, extensions to locally  
Lipschitz, resp. merely one-sided Lipschitz coefficients, have been obtained in 
\cite{baladron2012mean} (with clarification note \cite{bossy2015clarification}) and  
\cite{luccon2014mean,touboul2012limits}. The clarification \cite{bossy2015clarification} refers to an 
erroneous management of hitting times 
used in \cite{baladron2012mean} to localize the problem of existence of the McKean-Vlasov 
equations under local Lipschitz assumptions. Unfortunately, the same problem arises in the paper  
\cite{touboul2012limits}. Our approach in the present manuscript is different from 
the approach in all these references, since we apply the Euler approximation to the construction 
of a solution. To the best of our knowledge, this is the first time in the existing literature 
this technique has been applied to McKean-Vlasov equations. 

As already mentioned, delay terms form an important modelling issue in the neuroscience 
applications, and therefore also have been considered in the literature, e.g., point delays in  
\cite{quininao2015limits,touboul2012limits,touboul2014spatially,touboul2014propagation} and 
other references. Continuum limits of spatially structured biological neural networks and 
various types of random delay and/or random locations of the single neurons have been considered 
in \cite{luccon2014mean} and also in 
\cite{touboul2014propagation,touboul2014spatially}.  

Note that in all the previous mentioned results, the noise is assumed to be diffusive only. 
The only existing result on McKean-Vlasov limits of biological neural networks driven by L\'evy 
noise is the recent paper \cite{andreis2018mckean} for local dynamics under global Lipschitz assumptions or local dynamics of gradient type. 

Let us next specify the precise assumptions on the coefficients of \eqref{equ1} that we assume for the well-posedness of associated McKean-Vlasov equations.
 
\begin{hyp}
\label{hyp1} 
There exist a probability measure $\lambda$ on $[-\tau , 0]$ and nonnegative measurable functions 
$K_t ({\omega^\prime} )$, $L_t ({\omega^\prime} )$, $\bar{K}_t({\omega^\prime} )$, 
$\bar{L}_t({\omega^\prime} )$ and $\tilde{K}_t (R, {\omega^\prime} )$ in 
$L^1_{loc} ([0,\infty[,dt)$, for all $R > 0$ and all $\omega^\prime\in \Omega^\prime$, such that the 
following conditions concerning local dynamics, synaptic transmissions and disorder hold: 
\begin{itemize}
\item {Assumptions concerning local dynamics}
\begin{enumerate}[label=(H\theenumi)]
\item \label{H1} 
$2\left\langle x-\tilde{x},f(t,r,x,{\omega^\prime})-f(t,r,\tilde{x},{\omega^\prime})\right\rangle +\left\lvert g(t,r,x,{\omega^\prime}) 
 -g(t,r,\tilde{x},{\omega^\prime})\right\rvert^2$
 \[\qquad\qquad +\int_{U}\left\lvert h(t,r,x,{\omega^\prime},\xi)-h(t,r,\tilde{x},{\omega^\prime},\xi)\right\rvert^2\nu 
(\de \xi)
\le L_t({\omega^\prime})\left\lvert x-\tilde{x}\right\rvert^2 , 
\]
\item \label{H2}
$2\left\langle  x,f(t,r,x,{\omega^\prime})\right\rangle +\left\lvert g(t,r,x,{\omega^\prime})\right\rvert^2+\int_{U}\left\lvert h(t,r,x,{\omega^\prime},\xi)\right\rvert^2\nu(\de \xi)\le K_t({\omega^\prime})(1+\left\lvert x\right\rvert^2)$, 
\item \label{H6}
$\sup_{\left\lvert x\right\rvert\le R}\left[\left\lvert f(t,r,x,{\omega^\prime})\right\rvert  
+\left\lvert g(t,r,x,{\omega^\prime})\right\rvert^2 
+\int_U\left\lvert h(t,r,x,{\omega^\prime} ,\xi )
\right\rvert^2\nu(\de \xi)\right]
\le \tilde{K}_t(R,{\omega^\prime})$. 
\end{enumerate}
\item {Assumptions concerning synaptic transmissions}
\begin{enumerate}[label=(H\theenumi)]
\setcounter{enumi}{3}
\item \label{H3}
$ 
\sum_{\Theta\in \left\lbrace \theta,\beta\right\rbrace}\left\lvert \Theta(t,r,r^\prime,x,y_{-\tau:0},{\omega^\prime})-
\Theta(t,r,r^\prime,\tilde{x},\tilde{y}_{-\tau:0},{\omega^\prime})\right\rvert^2 
$
\begin{flalign*}
&\qquad +\int_U\left\lvert \eta(t,r,r^\prime,x,y_{-\tau:0}, 
{\omega^\prime},\xi)-\eta(t,r,r^\prime,\tilde{x},\tilde{y}_{-\tau:0},{\omega^\prime},\xi)\right\rvert^2\nu 
(\de \xi)\\&\le \bar{L}_t({\omega^\prime})\Big[\left\lvert x-\tilde{x}\right\rvert^2+\int_{-\tau}^0\left(\left\lvert y_s-\tilde{y}_s\right\rvert^2+\mathbf{1}_{\left\lbrace s<0\right\rbrace}\left\lvert y_{s^+}-\tilde{y}_{s^+}\right\rvert^2\right)\lambda(\de s)\Big],
\end{flalign*}
\item \label{H4''}
$\sum_{\Theta\in \left\lbrace \theta,\beta\right\rbrace}\left\lvert\Theta(t,r,r^\prime,x,y_{-\tau:0},{\omega^\prime})\right\rvert^2+\int_U\left\lvert \eta(t,r,r^\prime,x,y_{-\tau:0}, 
{\omega^\prime},\xi)\right\rvert^2\nu 
(\de \xi)$\[ \le \bar{K}_t({\omega^\prime})\left[1+\left\lvert x\right\rvert^2+\int_{-\tau}^0 \left(\left\lvert y_s\right\rvert^{2}+\mathbf{1}_{\left\lbrace s<0\right\rbrace}\left\lvert y_{s^+}\right\rvert^{2}\right)\lambda(\de s)\right],\]
\end{enumerate}
\item{Assumption referring to the disorder}
\begin{enumerate}[label=(H\theenumi)]
\setcounter{enumi}{5}
\item \label{H9}  for all $T>0$ the following expectation value is finite:
$$
\mathcal{E}\left\lbrace\exp\int_0^T\left[2L_s({\omega^\prime}) 
+ \bar{L}_s({\omega^\prime})\left(P+6\sup_{N\in\mathbb{N}}
\sum_{\alpha=1}^P\frac{\left(\#\mathcal{A}_N\cap\Gamma_\alpha\right)^2}{\mathcal{S}
_{\mathcal{A}_N,\alpha}^2}\right)+K_s({\omega^\prime})+3P\bar{K}_s({\omega^\prime})\right]\de s\right\rbrace < \infty\, .
$$ 
\end{enumerate} 
\end{itemize}
\end{hyp}

\begin{prop}
Under Hypothesis \ref{hyp1}, equation \eqref{equ1} has a unique strong solution.
\end{prop}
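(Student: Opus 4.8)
The plan is to fix the population size $N$ and view the finite system \eqref{equ1} as a single finite-dimensional stochastic delay equation with jumps for the lumped process $\mathbf{X}_t=(X^{r,\mathcal{A}_N}_t)_{r\in\mathcal{A}_N}\in\mathbb{R}^{d\cdot\#\mathcal{A}_N}$, driven by the independent noises $W^r$, $B^{r,\alpha}$, $N^r$, $N^{r,\alpha}$. Since the disorder $\omega^\prime$ enters only as a fixed parameter, I would first work for $\mathbb{P}^\prime$-almost every fixed $\omega^\prime$, so that the bounds $L_t(\omega^\prime),\bar L_t(\omega^\prime),K_t(\omega^\prime),\bar K_t(\omega^\prime)$ become deterministic locally integrable functions of $t$; Hypothesis \ref{H9} is then the device that upgrades the resulting pathwise-in-$\omega^\prime$ estimates to finite moments after integrating against $\mathbb{P}^\prime$. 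Because the coefficients are merely monotone (one-sided Lipschitz drift, Lipschitz-type diffusion and jump parts) rather than globally Lipschitz, the argument should follow the monotone-equation scheme, with the Euler approximation \eqref{df Xn-} supplying existence and the monotonicity bounds \ref{H1}, \ref{H3} supplying uniqueness.

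For uniqueness I would take two solutions $\mathbf{X},\tilde{\mathbf{X}}$ sharing the initial segment $z$ and apply the jump-It\^o formula to $\sum_{r\in\mathcal{A}_N}\lvert X^{r,\mathcal{A}_N}_t-\tilde X^{r,\mathcal{A}_N}_t\rvert^2$. The local-dynamics contribution (drift cross term together with the compensators of the diffusion and jump martingale parts) is controlled by \ref{H1}, producing the factor $2L_t$; the coercive bound \ref{H2} contributes the $K_t$ term. The synaptic contribution is handled by expanding each weighted sum $\frac{1}{\mathcal{S}_{\mathcal{A}_N,\alpha}}\sum_{\tilde r\in\mathcal{A}_N\cap\Gamma_\alpha}$ and applying Young's inequality to the drift cross term and Cauchy--Schwarz over the $P$ subpopulations and over the neurons in each $\Gamma_\alpha$; this is exactly where the combinatorial factor $P+6\sup_N\sum_\alpha \#(\mathcal{A}_N\cap\Gamma_\alpha)^2/\mathcal{S}_{\mathcal{A}_N,\alpha}^2$ and the term $3P\bar K_t$ of \ref{H9} are produced, in combination with the Lipschitz bound \ref{H3} and growth bound \ref{H4''}. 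The delay enters through the $\lambda$-integral in \ref{H3}; using that $\lambda$ is a probability measure on $[-\tau,0]$ one dominates $\int_{-\tau}^0\lvert X_{t+u}-\tilde X_{t+u}\rvert^2\,\lambda(\de u)$ by $\sup_{s\le t}\sum_r\lvert X^r_s-\tilde X^r_s\rvert^2$, turning the estimate into a delay-type Gronwall inequality with the random coefficient appearing in \ref{H9}. Solving the Gronwall inequality pathwise in $\omega^\prime$ and then taking $\mathcal{E}$, the exponential integrability \ref{H9} yields $\mathbf{X}=\tilde{\mathbf{X}}$, i.e.\ pathwise uniqueness.

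For existence I would use the Euler scheme \eqref{df Xn-}, which is well defined precisely because the synaptic coefficients act on c\`agl\`ad (hence left-continuous, predictable) path segments, so the approximations $\mathbf{X}^n$ are adapted. The first step is to derive moment bounds uniform in the mesh, $\sup_n\mathcal{E}\,\mathbb{E}\sup_{t\le T}\lvert\mathbf{X}^n_t\rvert^2<\infty$, from the growth conditions \ref{H2} and \ref{H4''}, with \ref{H6} guaranteeing that each step is well-posed and \ref{H9} supplying the exponential integrability needed to close the Gronwall estimate after integrating the random coefficient against $\mathbb{P}^\prime$. The second step is to pass to the limit: applying the monotonicity estimates \ref{H1} and \ref{H3} to the difference of two approximations controls them in $L^2$ up to a discretization error that vanishes with the mesh, producing a convergent (sub)sequence whose limit is adapted. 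One then identifies the limit as a solution of \eqref{equ1}, using continuity of the coefficients in $x$ together with the monotone bounds to pass to the limit in the drift, diffusion and jump integrals; combined with pathwise uniqueness this gives a unique strong solution.

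The main obstacle is the simultaneous presence of three features. First, the coefficients are only monotone, which rules out a Picard contraction and forces the convergence of the Euler scheme to be established through the monotonicity/It\^o--Gronwall machinery rather than a Lipschitz estimate. Second, the disorder coefficients $L_t,\bar L_t,K_t,\bar K_t$ are random and merely locally integrable, so every Gronwall step must be run pathwise in $\omega^\prime$ and then closed by the exponential-moment condition \ref{H9}. Third, the full path-segment delay couples the current increments to the history over $[-\tau,0]$ and must be absorbed through the $\lambda$-integral. I expect the technical heart to be the bookkeeping in the Gronwall estimate — matching the Cauchy--Schwarz expansion of the weighted synaptic sums exactly to the constant in \ref{H9} — while the conceptual heart is establishing the $L^2$-convergence of the Euler approximations under mere monotonicity.
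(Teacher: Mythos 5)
Your proposal takes essentially the same route as the paper: the authors prove this proposition by casting the fixed-$N$ network \eqref{equ1} as a single finite-dimensional path-dependent delay SDE with monotone coefficients (the lumped process you describe, with the stacked noises, for $\mathbb{P}^\prime$-a.e.\ fixed $\omega^\prime$) and invoking the general Euler-scheme existence--uniqueness result of Theorem \ref{thmA2} in Appendix \ref{Appendix}, whose conditions \ref{C1}--\ref{C4} follow from \ref{H1}--\ref{H6} by exactly the Young/Cauchy--Schwarz bookkeeping over the weighted synaptic sums that you sketch, with stopping-time localization and stopped moment bounds of the form $\sup_{t}\mathbb{E}\lvert X^{n}_{t\wedge\tau_R^{n}}\rvert^2$ playing the role of your claimed uniform $\mathbb{E}\sup_t$ bounds. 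One minor misattribution: Hypothesis \ref{H9} plays no role in this proposition --- for fixed $\omega^\prime$ the local integrability of $K$, $L$, $\bar{K}$, $\bar{L}$ already closes every pathwise Gronwall argument, and the exponential integrability in \ref{H9} is needed only later, in the proof of Theorem \ref{PropagationChaos}, where the Gronwall exponential is integrated against $\mathbb{P}^\prime$.
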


The proof of this proposition is rather standard. However, we could not find a reference in the 
literature that covers our setting completely. Therefore, we have incorporated a general existence 
and uniqueness result for stochastic delay differential equations with monotone  
coefficients driven by jump diffusions in the Appendix \ref{Appendix}. 

 
\begin{example} 
[FitzHugh Nagumo model with electrical synapses and simple maximum conductance variation]

Let us briefly discuss as an important example for networks of conductance-based point neuron 
models a network of FitzHugh-Nagumo neurons. In this model, two variables, the voltage variable 
$V$ having a cubic nonlinearity and a slower recovery variable $w$ describe the state of each 
neuron, that is reduced to one single point. We consider external current acting on the neuron 
placed at $r\in\Gamma$ with $\de I^r_{ext}=\lambda_1^r\de t+\lambda_2^r\de W^r_t$. To account 
for the time for the signal of the presynaptic neuron to travel down the axon, we incorporate 
delay in the presynaptic voltage, so that the current $I_t^{r,\tilde{r}}$ of the presynaptic 
neuron at position $\tilde{r}$ acting on the neuron at position $r$ at time $t$ is given by the 
following differential equation $\de I_t^{r,\tilde{r}}= \left(V^{r,\mathcal{A}_N}_{t^-} 
-V^{\tilde{r},\mathcal{A}_N}_{t-\tau}\right)\de J^{r,\tilde{r}}_t$ where $J^{r,\tilde{r}}_t$ 
denotes the maximum conductance which we assume to be given as the following jump diffusion 
equation: 
\[
\de J^{r,\tilde{r}}_t=\frac{1}{\mathcal{S}_{\mathcal{A}_N,\alpha}}\left[A_1^{r,\tilde{r}} 
\de t + A_2^{r,\tilde{r}}\de B^{r,\alpha}_t+\int_U\bar{\eta}^{r,\tilde{r}}(\xi)\tilde{N}^{r,\alpha} (\de t,\de \xi)\right] . 
\]
The network equation in this example is then given as  
\begin{equation} 
\label{FHNNetwork1} 
\begin{split}
\begin{dcases}
\de V^{r,\mathcal{A}_N}_t 
& =\left(-\frac{1}{3}\left(V^{r,\mathcal{A}_N}_{t^-}\right)^3+V^{r,\mathcal{A}_N}_{t^-} 
- w^{r,\mathcal{A}_N}_{t^-}+\lambda_1^r\right)\de t
+ \lambda_2^r \de W^r_t \\ 
& \quad-\sum_{\alpha=1}^P\frac{1}{\mathcal{S}_{\mathcal{A}_N,\alpha}}\sum_{\tilde{r}\in \mathcal{A}_N\cap \Gamma_\alpha}\left( V^{r,\mathcal{A}_N}_{t^-}-V^{\tilde{r},\mathcal{A}_N}_{t-\tau}\right)A_1^{r,\tilde{r}}\de t
\\ 
& \quad-\sum_{\alpha=1}^P\frac{1}{\mathcal{S}_{\mathcal{A}_N,\alpha}}\sum_{\tilde{r}\in \mathcal{A}_N\cap \Gamma_\alpha}\left( V^{r,\mathcal{A}_N}_{t^-}-V^{\tilde{r},\mathcal{A}_N}_{t-\tau}\right)A_2^{r,\tilde{r}}\de B^{r,\alpha}_t
\\ 
& \quad -\sum_{\alpha=1}^P\frac{1}{\mathcal{S}_{\mathcal{A}_N,\alpha}}\sum_{\tilde{r}\in \mathcal{A}_N\cap \Gamma_\alpha}\int_U\left( V^{r,\mathcal{A}_N}_{t^-}-V^{\tilde{r},\mathcal{A}_N}_{t-\tau}\right)\bar{\eta}^{r,\tilde{r}}(\xi)\tilde{N}^{r,\alpha}(\de t,\de \xi),\\
\de w^{r,\mathcal{A}_N}_t&=\lambda_3^r(V^{r,\mathcal{A}_N}_t+\lambda_4^r-\lambda_5^r w^{r,\mathcal{A}_N}_{t^-})\de t
\end{dcases}\end{split}
\end{equation} 
(see e.g. \cite{baladron2012mean}). Here, the measurable functions $A_i^{r,\tilde{r}}$,  
$1\leq i\leq 2$ and $\lambda_i^r, 1\leq i \leq 2$ are real valued and $\lambda_i^r, 3\leq i 
\leq 5$ are positive. The measurable function $(r,\tilde{r})\mapsto \bar{\eta}^{r,\tilde{r}}$ 
takes values in $L^2(U,\mathcal{U},\nu)$.

To obtain a continuum limit for \eqref{FHNNetwork1} we now assume the existence of a finite 
Borel measure $\mathcal{R}$ on $\Gamma$ with the following property: For every  
$\varepsilon > 0$ there exists a (finite) partition 
$\left\lbrace\Gamma_\alpha^{m,\varepsilon}, 1\le m\le M_\alpha^{(\varepsilon)}\right\rbrace$ 
of $\Gamma_\alpha$ such that 
\begin{equation}
\lim_{N\to\infty}
\frac{\#\mathcal{A}_N\cap \Gamma_\alpha^{m,\varepsilon}}{\mathcal{S}_{\mathcal{A}_N,\alpha}} 
= \mathcal{R}(\Gamma_\alpha^{m,\varepsilon}) , \quad 1\le m\le M_\alpha^{(\varepsilon)}    
\end{equation} 
and $\lim_{N\to\infty} \mathcal{S}_{\mathcal{A}_N,\alpha} =\infty$, $1\le \alpha\le P$ and for 
every $r,\tilde{r}\in \Gamma_\alpha^{m,\varepsilon}$ and $r^\prime,\tilde{r}^\prime\in 
\Gamma_{\alpha^\prime}^{m^\prime,\varepsilon}$
\[
\left\lvert A_i^{r,r^\prime}-A_i^{\tilde{r}, \tilde{r}^\prime}\right\rvert<\varepsilon,  
\quad 1\leq i\leq 2;\qquad
\left\lvert \lambda_i^{r}-\lambda_i^{\tilde{r}}\right\rvert<\varepsilon,\quad 1\leq i\leq 5;
\qquad
\int_U\left\lvert \bar{\eta}^{r,r^\prime}(\xi)-\bar{\eta}^{\tilde{r}, \tilde{r}^\prime}
(\xi)\right\rvert^2 \nu(\de \xi)<\varepsilon. 
\]
Under these assumptions the solution of \eqref{FHNNetwork1} converges to the solution of the 
following McKean-Vlasov equation:
\begin{equation}
\begin{split}
\begin{dcases}
\de V^{r}_t&=\left(-\frac{1}{3}\left(V^{r}_{t^-}\right)^3+V^{r}_{t^-}-w^{r}_{t^-}+\lambda_1^r\right)\de t
+\lambda_2^r\de W^r_t\\&\quad-\sum_{\alpha=1}^P\int_{\Gamma_\alpha}\left( V^{r}_{t^-}-
V^{\tilde{r}}_{t-\tau}\right)A_1^{r,\tilde{r}}\mathcal{R}(\de \tilde{r})\,\de t
\\
& \quad 
-\sum_{\alpha=1}^P\int_{\Gamma_\alpha}\left( V^{r}_{t^-}-V^{\tilde{r}}_{t-\tau}\right)A_2^{r,
\tilde{r}}\mathcal{R}(\de \tilde{r})\,\de B^{r,\alpha}_t
\\
& \quad -\sum_{\alpha=1}^P\int_U\int_{\Gamma_\alpha}\left( V^{r}_{t^-} 
-V^{\tilde{r}}_{t-\tau}\right)\bar{\eta}^{r,\tilde{r}}(\xi)\mathcal{R}(\de \tilde{r})\,
\tilde{N}^{r,\alpha}(\de t,\de \xi), 
\\
\de w^{r}_t&=\lambda_3^r(V^{r}_t+\lambda_4^r-\lambda_5^r w^{r}_{t^-})\de t \, . 
\end{dcases} 
\end{split}
\end{equation}

\end{example}

\subsection{Well-posedness of the McKean-Vlasov equation}

For a given finite Borel measure $\mathcal{R}$ on $\Gamma$, specifying the spatial (unnormalized) 
distribution of neurons, the  McKean-Vlasov equation  for  the infinite population limit of the 
network \eqref{equ1} is given by the following equation:
\begin{equation} 
\label{equ3}
\begin{split}
\de \bar{X}^{r}_t&= f\left(t,r,\bar{X}^{r}_{t^-},{\omega^\prime}\right)\de t+g\left(t,r,\bar{X}^{r}_{t^-},{\omega^\prime}\right)\de W^{r}_t+\int_U h\left(t,r,\bar{X}^{r}_{t^-},{\omega^\prime},\xi\right)\tilde{N}^r(\de t,\de \xi)\\&\quad+\sum_{\alpha=1}^P\int_{\Gamma_\alpha}\tilde{\mathbb{E}}\left[\theta\left(t,r,r^\prime,\bar{X}^{r}_{t^-},\hat{X}^{r^\prime}_{(t-\tau)^-:t^-},{\omega^\prime}\right)\right]\mathcal{R}(\de r^\prime) \de t\\&\quad+\sum_{\alpha=1}^P\int_{\Gamma_\alpha}\tilde{\mathbb{E}}\left[\beta\left(t,r,r^\prime,\bar{X}^{r}_{t^-},\hat{X}^{r^\prime}_{(t-\tau)^-:t^-},{\omega^\prime}\right)\right]\mathcal{R}(\de r^\prime) \de B^{r,\alpha}_t\\&\quad+\sum_{\alpha=1}^P\int_U\int_{\Gamma_\alpha}\tilde{\mathbb{E}}\left[\eta\left(t,r,r^\prime,\bar{X}^{r}_{t^-},\hat{X}^{r^\prime}_{(t-\tau)^-:t^-},{\omega^\prime},\xi\right)\right]\mathcal{R}(\de r^\prime) \tilde{N}^{r,\alpha}(\de t, \de \xi),\\
\bar{X}^{r}_t&=z^r_t, \quad  t\in [-\tau,0]
\end{split}
\end{equation}
for independent Brownian motions $W^{r}$, $B^{r,\alpha}$ and independent compensated Poisson 
measures $\tilde{N}^{r}$ and $\tilde{N}^{r,\alpha}$.  Here $\hat{X}$ is an independent copy of $X$, the solution of 
\begin{equation}
\label{equ2}
\begin{split}
\de {X}^{r}_t & = f\left(t,r,{X}^{r}_{t^-},{\omega^\prime}\right)\de t+g\left(t,r,{X}^{r}_{t^-},{\omega^\prime}\right) 
\de W_t+\int_U h\left(t,r,{X}^{r}_{t^-},{\omega^\prime},\xi\right)\tilde{N}(\de t,\de \xi)\\
& \quad+\sum_{\alpha=1}^P\int_{\Gamma_\alpha}\tilde{\mathbb{E}} 
\left[\theta\left(t,r,r^\prime,{X}^{r}_{t^-},\tilde{X}^{r^\prime}_{(t-\tau)^-:{t^-}},{\omega^\prime}\right)\right]\mathcal{R}(\de r^\prime) 
 \de t \\
& \quad+\sum_{\alpha=1}^P\int_{\Gamma_\alpha}\tilde{\mathbb{E}} 
\left[\beta\left(t,r,r^\prime,{X}^{r}_{t^-},\tilde{X}^{r^\prime}_{(t-\tau)^-:{t^-}},{\omega^\prime}\right)\right] 
\mathcal{R}(\de r^\prime) \de B_t^\alpha\\ 
& \quad + \sum_{\alpha=1}^P\int_{\Gamma_\alpha}\int_U\tilde{\mathbb{E}} 
\left[\eta\left(t,r,r^\prime,{X}^{r}_{t^-},\tilde{X}^{r^\prime}_{(t-\tau)^-:{t^-}},{\omega^\prime},
\xi\right)\right]\mathcal{R}(\de r^\prime) \tilde{N}^\alpha(\de t, \de \xi), 
\\
{X}^{r}_t & =\hat{z}^{\zeta}_t\, ,  \quad r\in \Gamma_\zeta, \zeta = 1, \ldots , P\, , 
t\in [-\tau,0]\, , 
\end{split}
\end{equation}
where $\tilde{X}=\hat{X}$ is a copy of $X$, defined on another probability space, say 
$\left(\tilde{\Omega},\tilde{\mathcal{F}}, \tilde{\mathbb{P}}\right)$ and $\tilde{\mathbb{E}}$ 
denotes  expectation with respect to $\tilde{\mathbb{P}}$. To avoid unnecessary notations, we 
assume $\mathcal{R}(\Gamma_\alpha)=1$, for all $1\le \alpha\le P$. 

Note that a solution to \eqref{equ2} requires in particular the measurability of 
$\tilde{X}$ w.r.t. $r'$, since otherwise the integrals of the expectation values w.r.t.  
$\tilde{\mathbb{P}}$ are not well-defined. In Theorem \ref{existunique} below we will therefore prove 
existence and uniqueness of a strong solution of equation \eqref{equ2} that is also measurable 
w.r.t. $r$. Note that this implies in particular the existence of an independent measurable copy 
$\tilde{X}$ or $\hat{X}$ of $X$. Therefore  the integrals w.r.t. $\mathcal{R}(\de r^\prime)$ in \eqref{equ3} and \eqref{equ2} are well-defined. Since for each $r\in\Gamma$, $\bar{X}^{r}$ and $X^{r}$ have the same law, so that  
$\hat{X}^r$ can be assumed to be a copy of $\bar{X}^r$ too. 

In the reference \cite{touboul2014propagation} 
a representation of the McKean-Vlasov equation was given in terms of a continuum family of independent Brownian motions, so called spatially 
chaotic. In the statement of Theorem \ref{PropagationChaos}, we only use the solution of equation \eqref{equ3} for $r$ in the countable set  $ \bigcup_{N\in\mathbb{N}}\mathcal{A}_N$ and for $\hat{X}$ being an independent copy of $X$, the solution of equation \eqref{equ2}.   In this sense our solution coincides 
with the solution constructed in \cite{touboul2014propagation}, but we had circumvented  
measurability issues of $\bar{X}^{r}$ w.r.t. $r$. 

\begin{lem}
\label{boundedness}
Assume Hypothesis \ref{hyp1}. For every measurable strong solution $X$ of equation \eqref{equ2} which belongs to $L^\infty([-\tau,T],\de t; L^2(\Omega\times \Gamma, \mathbb{P}\otimes \mathcal{R}; \mathbb{R}^d))$ for $\mathbb{P}^\prime$-almost all ${\omega^\prime}\in\Omega^\prime$, we have
\[
\sup_{s\in [-\tau,t]}\mathbb{E}\left[\left\lvert X^{r}_s \right\rvert ^2 \right] \le C_1(t,{\omega^\prime}),\quad r\in\Gamma\, , t\le T\, , 
\]
where 
\begin{equation}
\label{C_1 bound of slv2}
C_1(t,{\omega^\prime}):= \left(\sup_{\substack{{u\in[-\tau,0]}\\{1\leq \zeta\leq P}}}\mathbb{E}\left\lvert \hat{z}^\zeta(u)\right\rvert^2+1\right)\exp\left(\int_0^t\left(K_s({\omega^\prime})+3P\bar{K}_s({\omega^\prime})+P\right)\de s\right).
\end{equation}
\end{lem}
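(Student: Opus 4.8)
The plan is to apply the Itô formula for jump diffusions to $|X^r_t|^2$ along \eqref{equ2}, take expectations to remove the stochastic integrals, bound the resulting drift by means of Hypothesis \ref{hyp1}, and close the estimate with Gronwall's inequality applied to
$\Phi(t) := \sup_{r\in\Gamma}\sup_{s\in[-\tau,t]}\mathbb{E}\left[\left\lvert X^r_s\right\rvert^2\right]$.
Writing $Z := \sup_{u\in[-\tau,0],\,1\le\zeta\le P}\mathbb{E}\left\lvert\hat z^\zeta(u)\right\rvert^2$, the goal is the differential inequality $\Phi(t)\le Z+\int_0^t\bigl(K_s(\omega^\prime)+3P\bar K_s(\omega^\prime)+P\bigr)\bigl(1+\Phi(s)\bigr)\de s$, from which Gronwall applied to $1+\Phi$ gives exactly $\Phi(t)\le(Z+1)\exp\int_0^t(K_s+3P\bar K_s+P)\,\de s=C_1(t,\omega^\prime)$, and hence the stated pointwise bound since $\sup_{s\le t}\mathbb{E}|X^r_s|^2\le\Phi(t)$.

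After compensating the jumps, Itô's formula shows that the drift of $\mathbb{E}|X^r_t|^2$ equals the expectation of $2\langle X^r_{t^-},(\text{drift}) \rangle+\lvert(\text{diffusion})\rvert^2+\int_U\lvert(\text{jump})\rvert^2\nu(\de\xi)$, the $\de W^r$, $\de B^{r,\alpha}$ and $\tilde N$–integrals being local martingales. To obtain genuine martingales with vanishing expectation I would localise with $\sigma_n=\inf\{t:\lvert X^r_t\rvert\ge n\}$, derive the estimate for the stopped process, and pass to the limit via Fatou on the left and the growth bounds \ref{H2}, \ref{H4''} on the right. Here the standing assumption $X\in L^\infty([-\tau,T];L^2(\Omega\times\Gamma,\mathbb P\otimes\mathcal R))$ enters decisively: the delay contribution, after integration in $r'$, is governed by $\int_\Gamma\mathbb{E}|X^{r'}_v|^2\,\mathcal{R}(\de r')$, which the hypothesis keeps finite, so the passage to the limit is legitimate.

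For the drift bound I would separate local from synaptic coefficients. By \ref{H2} the local part is bounded by $K_t(\omega^\prime)(1+\lvert X^r_{t^-}\rvert^2)$, producing the $K_s$ term. The decisive point is the synaptic cross term: writing it as $2\sum_\alpha\int_{\Gamma_\alpha}\langle X^r_{t^-},\tilde{\mathbb{E}}[\theta]\rangle\,\mathcal{R}(\de r')$ and using $2\langle a,b\rangle\le\lvert a\rvert^2+\lvert b\rvert^2$ with $\mathcal{R}(\Gamma_\alpha)=1$ generates the term $P\lvert X^r_{t^-}\rvert^2$, the origin of the constant $P$ in the exponent. The remaining synaptic diffusion and jump contributions are treated by Jensen's inequality, first in $\tilde{\mathbb{E}}$ and then in $\mathcal{R}(\de r')$ over each $\Gamma_\alpha$, together with Fubini, so that $\lvert\theta\rvert^2+\lvert\beta\rvert^2+\int_U\lvert\eta\rvert^2\nu(\de\xi)$ enters under a single use of \ref{H4''} and is dominated by $\bar K_t(\omega^\prime)\bigl[1+\lvert X^r_{t^-}\rvert^2+\int_{-\tau}^0(\lvert y_s\rvert^2+\mathbf 1_{\{s<0\}}\lvert y_{s^+}\rvert^2)\lambda(\de s)\bigr]$ with $y=\tilde X^{r'}_{(t-\tau):t}$. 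Since $X$ is càdlàg the right limits are again controlled by $\Phi(t)$, and as $\lambda$ is a probability measure each of the two delay terms and the $\lvert X^r_{t^-}\rvert^2$ term is $\le\Phi(t)$; this yields the factor $3$, and summing over the $P$ subpopulations gives the coefficient $3P\bar K_s$. Collecting the estimates and using $\mathbb{E}|X^r_{t^-}|^2\le\Phi(t)$ I obtain, for every $r$,
\[
\frac{\de}{\de t}\mathbb{E}\left[\lvert X^r_t\rvert^2\right]\le\bigl(K_t(\omega^\prime)+3P\bar K_t(\omega^\prime)+P\bigr)\bigl(1+\Phi(t)\bigr),
\]
and taking the supremum over $r$ and over $s\le t$ (with $\mathbb{E}|X^r_s|^2\le Z$ on $[-\tau,0]$) closes the Gronwall argument.

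I expect the main obstacle to be precisely the localisation/limit step, because the hypothesis supplies only the $\mathcal{R}$–averaged $L^2$ bound while the conclusion and the Gronwall argument require the pointwise-in-$r$ finiteness of $\Phi(t)$. The remedy I would use is a bootstrap: first run the estimate with the delay term bounded crudely by the a priori integrated bound $\sup_{s\le T}\int_\Gamma\mathbb{E}|X^{r'}_s|^2\mathcal{R}(\de r')<\infty$, which produces an $r$-independent finite (but non-sharp) bound and hence shows $\Phi(t)<\infty$; only then do I insert the sharp estimate $\int_\Gamma\mathbb{E}|X^{r'}_v|^2\mathcal{R}(\de r')\le P\,\Phi(t)$ and rerun Gronwall to reach the exact constant $C_1(t,\omega^\prime)$.
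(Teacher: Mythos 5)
Your proposal is correct and arrives at exactly the constant $C_1(t,\omega^\prime)$, but it organizes the Gronwall step genuinely differently from the paper. The shared core is identical: It\^o's formula for $\lvert X^r_t\rvert^2$, localization and Fatou, the splitting $2\langle a,b\rangle\le\lvert a\rvert^2+\lvert b\rvert^2$ producing the $+P$, and Jensen in $\tilde{\mathbb{E}}$ and in $\mathcal{R}(\de r^\prime)$ over each $\Gamma_\alpha$ (with $\mathcal{R}(\Gamma_\alpha)=1$) so that \ref{H2} and \ref{H4''} are invoked exactly as in the paper. The paper, however, never runs Gronwall on a pointwise-in-$r$ quantity: it multiplies by the integrating factor $\psi_t(\omega^\prime)=\exp\bigl(-\int_0^t(K_s+P\bar K_s+P)\,\de s\bigr)$, which cancels the linear drift term identically, so that after Fatou the pointwise inequality contains on its right-hand side only the initial data and the $\mathcal{R}$-averaged delayed second moment; a single Gronwall is then applied to the $\mathcal{R}$-integrated moment $\sup_{u\in[-\tau,t]}\mathbb{E}\int_\Gamma\lvert X^r_u\rvert^2\,\mathcal{R}(\de r)$ — finite by the standing $L^\infty([-\tau,T];L^2(\Omega\times\Gamma))$ assumption — and the pointwise bound drops out by back-substitution, with no a priori pointwise finiteness ever required. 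You instead apply Gronwall directly to $\Phi(t)=\sup_{r}\sup_{s\in[-\tau,t]}\mathbb{E}\lvert X^r_s\rvert^2$, which is only legitimate once $\Phi<\infty$ is known; you correctly identify this as the crux, and your bootstrap does repair it, because the interaction enters the drift only through the $r$-independent quantity $\sup_{v\le s}\int_\Gamma\mathbb{E}\lvert X^{r^\prime}_v\rvert^2\,\mathcal{R}(\de r^\prime)$ (finite by hypothesis; note the essential supremum in the hypothesis upgrades to a genuine supremum via right-continuity of paths and Fatou, and \ref{H6} guarantees the stopped moments in your crude pass are finite so Gronwall applies at the stopped level). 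The second pass with $\int_\Gamma\le P\,\Phi$ then closes the sharp estimate, and the arithmetic matches the paper's: $(K_s+P\bar K_s+P)(1+\Phi)+2P\bar K_s(1+\Phi)=(K_s+3P\bar K_s+P)(1+\Phi)$, so Gronwall on $1+\Phi$ yields precisely \eqref{C_1 bound of slv2}. What each route buys: the paper's integrating-factor arrangement is slicker (one Gronwall, no bootstrap, no pointwise finiteness issue), while yours is more elementary — plain Gronwall twice, no exponential weight — and has the virtue of making completely explicit where the standing averaged-$L^2$ assumption is indispensable; your closing $\frac{\de}{\de t}$ formulation should of course be read as the integral inequality you state earlier, since $t\mapsto\mathbb{E}\lvert X^r_t\rvert^2$ need not be differentiable.
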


\begin{proof}
Let $\tau_{n,r}:=\inf\left\lbrace t \ge 0; \left\lvert X^{r}_t\right\rvert>n\right\rbrace$ and 
\[ 
\psi_t({\omega^\prime}):=\exp\left(-\int_0^t\left(K_s({\omega^\prime}) 
+ P\bar{K}_s({\omega^\prime})+P\right)\de s\right)  \, . 
\]
Using It\^o's formula, we get for $r\in \Gamma_\zeta$ that 
\begin{align*}
\psi_{t\wedge \tau_{n,r}}({\omega^\prime})\left\lvert X^{r}_{t\wedge \tau_{n,r}}\right\rvert^2 
& = \left\lvert \hat{z}^\zeta(0)\right\rvert^2 
+\int_0^{t\wedge \tau_{n,r}}\psi_{s}({\omega^\prime})\Big[ 2\left\langle X^{r}_{s^-},f(s,r,X^{r}_{s^-},{\omega^\prime})\right\rangle+\left\lvert g(s,r,X^{r}_{s^-},{\omega^\prime})\right\rvert^2 \Big] \de s  \\
& \quad
+ \int_0^{t\wedge \tau_{n,r}}\psi_{s}({\omega^\prime})\int_U\left\lvert h(s,r,X^{r}_{s^-},{\omega^\prime},\xi) 
   \right\rvert^2{N}(\de s,\de \xi)  \\ 
& \quad +\int_0^{t\wedge\tau_{n,r}}2\psi_{s}({\omega^\prime})\sum_{\alpha=1}^P\left\langle  X^{r}_{s^-},\tilde{\mathbb{E}}\int_{\Gamma_\alpha}\theta\left(s,r,r^\prime,X^{r}_{s^-},\tilde{X}^{r^\prime}_{(s-\tau)^-:s^-},{\omega^\prime}\right)\mathcal{R}(\de r^\prime)\right\rangle \de s  \\ 
& \quad  +\int_0^{t\wedge\tau_{n,r}}\psi_{s}({\omega^\prime})\sum_{\alpha=1}^P\left\lvert\tilde{\mathbb{E}}\int_{\Gamma_\alpha}\beta\left(s,r,r^\prime,X^{r}_{s^-},\tilde{X}^{r^\prime}_{(s-\tau)^-:s^-},{\omega^\prime}\right)\mathcal{R}(\de r^\prime)\right\rvert^2\de s  \\  
& \quad  +\int_0^{t\wedge\tau_{n,r}}\psi_{s}({\omega^\prime})\sum_{\alpha=1}^P\int_U\left\lvert\tilde{\mathbb{E}}\int_{\Gamma_\alpha}\eta\left(s,r,r^\prime,X^{r}_{s^-},\tilde{X}^{r^\prime}_{(s-\tau)^-:s^-},{\omega^\prime}\right)\mathcal{R}(\de r^\prime)\right\rvert^2{N}^\alpha(\de s, \de \xi)  \\  
& \quad -\int_0^{t\wedge\tau_{n,r}}\psi_{s}({\omega^\prime})\left(K_s({\omega^\prime})+P\bar{K}_s({\omega^\prime})+P\right)\left\lvert X^{r}_{s^-}\right\rvert^2\de s+M_{t\wedge \tau_{n,r}}
\end{align*}
where $M_{t\wedge\tau_{n,r}}$ is a martingale starting from zero. Taking expectation and 
using \ref{H2} and \ref{H4''} we get
\begin{align*}
\mathbb{E} \left( \psi_{t\wedge \tau_{n,r}}({\omega^\prime}) 
\left\lvert X^{r}_{t\wedge \tau_{n,r}}\right\rvert^2 \right) 
& \le \mathbb{E}\left\lvert \hat{z}^\zeta(0)\right\rvert^2+\mathbb{E}\int_0^{t\wedge\tau_{n,r}}\psi_s({\omega^\prime})\Big[
K_s({\omega^\prime})+P\bar{K}_s({\omega^\prime}) \\ 
& \qquad\qquad  
+ \bar{K}_s({\omega^\prime})\tilde{\mathbb{E}}\int_\Gamma\int_{-\tau}^0 \left(\left\lvert \tilde{X}^{r^\prime}_{(s+u)^-}\right\rvert^2+\mathbf{1}_{\left\lbrace u<0\right\rbrace}\left\lvert \tilde{X}^{r^\prime}_{s+u}\right\rvert^2\right)\lambda(\de u)\mathcal{R}(\de r^\prime)\Big]\de s \\ 
& \le \mathbb{E}\left\lvert 
\hat{z}^\zeta(0)\right\rvert^2+1+\mathbb{E}\int_0^{t\wedge\tau_{n,r}} 2
\psi_s({\omega^\prime})\bar{K}_s({\omega^\prime})\sup_{u\in[-\tau,s]} 
\tilde{\mathbb{E}}\int_\Gamma\left\lvert \tilde{X}^{r^\prime}_{u}\right\rvert^2\mathcal{R} 
(\de r^\prime)\de s
\end{align*}
and with Fatou's lemma it follows that
\begin{align}\label{ineq1-proofbddlem}
\psi_t({\omega^\prime})\mathbb{E}\left\lvert X^{r}_t\right\rvert^2 
& \le \liminf_{n\to \infty} \mathbb{E}\left( \psi_{t\wedge \tau_{n,r}} 
({\omega^\prime}) \left\lvert X^{r}_{t\wedge \tau_n}\right\rvert^2 \right) \nonumber \\ 
& \le \mathbb{E}\left\lvert \hat{z}^\zeta(0)\right\rvert^2 + 1
+ \mathbb{E}\int_0^{t}2 \psi_s({\omega^\prime})\bar{K}_s({\omega^\prime})
\sup_{u\in[-\tau,s]}\tilde{\mathbb{E}}\int_\Gamma\left\lvert \tilde{X}^{r^\prime}_{u}\right\rvert^2\mathcal{R}(\de r^\prime)\de s.
\end{align} 
Integrating w.r.t. $\mathcal{R}$ and using Gronwall's lemma we obtain that 
\begin{align}\label{ineq2-proofbddlem}
\MoveEqLeft[2]\psi_t({\omega^\prime})\sup_{u\in [-\tau,t]}\mathbb{E}\int_\Gamma\left\lvert X^{r}_u \right\rvert ^2 \mathcal{R}(\de r)\nonumber \\  
& \le \left(\sup_{u\in[-\tau,0]} \mathbb{E} \sum_{\zeta=1}^P\left\lvert \hat{z}^\zeta(u)\right\rvert^2 + P\right)  
\exp{\int_0^t\left(2P \bar{K}_s({\omega^\prime})\right)\de s}\, . 
\end{align}
By substituting \eqref{ineq2-proofbddlem} in \eqref{ineq1-proofbddlem}, we get
\begin{align*}
\psi_t({\omega^\prime})\mathbb{E}\left\lvert X^{r}_t\right\rvert^2 
\le \left(\sup_{\substack{{u\in[-\tau,0]}\\{1\leq \zeta\leq P}}}\mathbb{E}\left\lvert  
\hat{z}^\zeta(u)\right\rvert^2+1\right) 
\exp{\int_0^t\left(2P \bar{K}_s({\omega^\prime})\right)\de s}\, .
\end{align*}
Therefore
\[
\sup_{s\in[-\tau,t]}\mathbb{E}\left\lvert X^{r}_s\right\rvert^2 
\le \left(\sup_{\substack{{u\in[-\tau,0]}\\{1\leq \zeta\leq P}}}\mathbb{E}\left\lvert 
\hat{z}^\zeta(u)\right\rvert^2 + 1\right)\exp{\int_0^t\left(K_s({\omega^\prime}) + 3P\bar{K}_s({\omega^\prime})+P\right)\de s}=C_1(t,{\omega^\prime})\, . 
\]
\end{proof}

\begin{thm}
\label{existunique}
Equation \eqref{equ2} has a unique strong solution $X({\omega^\prime})\in  
L^\infty([-\tau,T], \de t; L^2(\Omega\times \Gamma, \mathbb{P}\otimes \mathcal{R}; 
\mathbb{R}^d))$ on $[-\tau,T]$ for any $T>0$ and $\mathbb{P}^\prime$-almost every 
${\omega^\prime}\in\Omega^\prime$ which is in particular measurable w.r.t. $(r,\omega^\prime)$.
\end{thm}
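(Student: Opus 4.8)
The plan is to construct the solution via the time-discrete Euler scheme \eqref{df Xn-}, exactly as announced, rather than by a Picard iteration on the flow of laws. On a uniform partition of $[-\tau,T]$ with mesh $\delta_n\to 0$ I would freeze, on each subinterval, the delayed and mean-field arguments of $\theta,\beta,\eta$ at the preceding grid point (this is where the c\`agl\`ad convention enters), so that the McKean--Vlasov self-reference becomes explicit: on each step the law of the already-computed past is known, and the expectation integrals $\tilde{\mathbb{E}}\int_{\Gamma_\alpha}(\cdots)\,\mathcal{R}(\de r')$ reduce to a deterministic, \ref{H3}-Lipschitz drift/diffusion in the current state. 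Each Euler step is then a genuine stochastic delay equation with monotone local coefficients $f,g,h$ and frozen interaction terms, whose unique strong solvability is furnished by the monotone-coefficient theory recorded in Appendix~\ref{Appendix}. Since every step is a measurable transformation of $(r,\omega')$ and of the previously constructed segment, the approximants $X^n(\omega')$ are jointly measurable in $(r,\omega')$, and this measurability passes to the limit, yielding in particular the independent measurable copy $\tilde X=\hat X$ required to make \eqref{equ2}--\eqref{equ3} meaningful.

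Second, I would establish a priori bounds uniform in $n$, namely $\sup_n\sup_{t\le T}\mathbb{E}\int_\Gamma|X^{n,r}_t|^2\,\mathcal{R}(\de r)\le C(T,\omega')<\infty$ together with the pointwise-in-$r$ analogue. The argument mirrors Lemma~\ref{boundedness}: apply It\^o to $\psi_t|X^{n,r}_t|^2$ for a suitable exponential weight $\psi$, bound the local terms by \ref{H2} and the interaction terms by \ref{H4''}, integrate against $\mathcal{R}$, and close by Gronwall. The integrability \ref{H9} of the accumulated coefficient exponent is precisely what makes these bounds finite for $\mathbb{P}'$-almost every $\omega'$ and uniform in $n$; it is also what lets every Gronwall step below hold $\omega'$-almost surely.

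Third comes the convergence. I would show that $(X^n)$ is Cauchy in $L^\infty([-\tau,T];L^2(\Omega\times\Gamma,\mathbb{P}\otimes\mathcal{R}))$ by applying It\^o to $|X^{n,r}_t-X^{m,r}_t|^2$. The decisive point is that the local contribution $2\langle X^{n,r}-X^{m,r},\,f(\cdot,X^{n,r})-f(\cdot,X^{m,r})\rangle+|g(\cdot,X^{n,r})-g(\cdot,X^{m,r})|^2+\int_U|h(\cdot,X^{n,r})-h(\cdot,X^{m,r})|^2\nu(\de\xi)$ is controlled by $L_t|X^{n,r}-X^{m,r}|^2$ via \ref{H1}, \emph{without} any global Lipschitz assumption, while the interaction differences are controlled by \ref{H3}, producing after integration in $\mathcal{R}$ a self-consistent Gronwall inequality in $\sup_{u\le t}\mathbb{E}\int_\Gamma|X^{n,r}_u-X^{m,r}_u|^2\,\mathcal{R}(\de r)$. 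The Euler freezing error, i.e.\ the discrepancy between the true and the frozen arguments, is estimated through the local-boundedness bound \ref{H6} and the a priori moments of the previous step, and tends to zero with the mesh. Passing to the limit in the Euler identity is then routine: the continuity of the coefficients in $x$, the uniform $L^2$ bounds and dominated convergence identify each integral term, and the \ref{H3}-Lipschitz dependence guarantees that the mean-field expectations along the copies converge to the same expectations along the limit, so the limit $X$ solves \eqref{equ2} with self-consistency holding automatically. Uniqueness follows from the identical It\^o-plus-Gronwall computation applied to the difference of two solutions.

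The hard part, I expect, is not any single estimate but the simultaneous management of three coupled difficulties: the local dynamics are merely monotone (so the squared-difference estimate must be organized so that only the one-sided combination bounded by \ref{H1} ever appears, and the freezing error must be absorbed using \ref{H6} rather than Lipschitz continuity); the equation is genuinely of McKean--Vlasov type with a path-dependent delay (so the Gronwall feedback mixes the present difference with its history and with the law of the copy); and the whole construction must remain jointly measurable in $(r,\omega')$ with all constants controlled $\omega'$-almost surely through the single exponential-moment hypothesis \ref{H9}. Keeping the Euler freezing error genuinely small for coefficients that are only monotone, as opposed to Lipschitz, is the step most likely to require care.
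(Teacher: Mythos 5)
Your architecture is the paper's: the construction in Section \ref{ProofOfThmExistunique} is exactly the semi-implicit Euler scheme you describe --- equation \eqref{df Xn} freezes only the delayed mean-field arguments $Y^{n,r'}_{\kappa(n,(s-\tau):s)}$ along an independent copy, keeps the monotone local coefficients $f,g,h$ evaluated at the current state, solves each step by the Appendix result (Theorem \ref{thmA2}), propagates joint measurability in $(r,\omega')$ inductively, obtains a priori second moments as in Lemma \ref{boundedness} (inequality \eqref{bddsquare}), and closes stability and uniqueness by It\^o plus Gronwall under \ref{H1}, \ref{H3} with all $\omega'$-dependent constants integrable by \ref{H9}. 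Up to that point the proposal and the paper coincide.

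There is, however, one genuine gap in your third step. You claim the scheme is Cauchy in $L^\infty([-\tau,T];L^2(\Omega\times\Gamma,\mathbb{P}\otimes\mathcal{R}))$, with the freezing error ``estimated through \ref{H6} and the a priori moments'' and vanishing with the mesh \emph{in that norm}. The hypotheses do not support this: \ref{H6} bounds $\sup_{|x|\le R}|f|$ only by $\tilde{K}_t(R,\omega')$, which blows up with $R$, and \ref{H2} controls $\langle x,f\rangle$ only one-sidedly, so there is no unconditional bound on $\mathbb{E}\lvert p^{n,r}_t\rvert^2$ for the remainder $p^{n,r}_t=X^{n,r}_{\kappa(n,t)}-X^{n,r}_{t^-}$ (for the cubic FitzHugh--Nagumo drift such a bound would require sixth moments, which are not available). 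Hence the inhomogeneous term in your proposed mean-square Gronwall estimate cannot be shown to tend to zero, and the unlocalized $L^2$-Cauchy claim fails. The paper circumvents this by localization: it introduces the stopping times $\tau^{n,r}_R$ (and their analogues $\tilde{\tau}^{n,r}_R$ for the copy $Y^n$), proves only that $\mathbf{1}_{(-\tau,\tau^{n,r}_R]}(t)\lvert p^{n,r}_t\rvert\to 0$ in probability, shows $\lim_{R\to\infty}\limsup_{n\to\infty}\mathbb{P}\{\tau^{n,r}_R<T\}=0$, establishes the Cauchy property merely \emph{in probability} uniformly on $[0,T]$ --- splitting the event $\{s>\tilde{\tau}^{n,m,r'}_R\}$ via the growth bound \ref{H4''} and the moment bound $C_1$, and running Gronwall on $\sup_{r'\in\Gamma}\sup_{u\le s}\mathbb{E}\lvert X^{n,r'}_{u\wedge\tau^{n,m,r'}_R}-X^{m,r'}_{u\wedge\tau^{n,m,r'}_R}\rvert^2$ --- and only afterwards recovers the stated $L^\infty([-\tau,T];L^2)$ membership of the limit from the uniform bound \eqref{bddsquare} via Fatou; the identification of the limit as a solution of \eqref{equ2} likewise proceeds through stopped uniform-integrability and dominated-convergence arguments rather than global $L^2$ estimates. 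Your plan becomes correct once the $L^2$-Cauchy claim is replaced by this localized convergence in probability; as written, that step would not go through.
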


The proof of the theorem is postponed to Section \ref{ProofOfThmExistunique}.

\subsection{Propagation of Chaos}
\label{SecPropagationChaos} 

We are now going to state a convergence result for the solution of the network equations 
\eqref{equ1} to the solution of the McKean-Vlasov equation \eqref{equ3} in the infinite 
population limit. To this end we first have to specify a condition on the spatial density of the 
approximating network populations and a statement concerning the dependence of $X_t^{r}$ w.r.t. 
the spatial parameter $r$. To this end consider the following hypothesis:

\begin{hyp}
\label{hyp2} 
The coefficients of the network equation \eqref{equ1} satisfy Hypothesis \ref{hyp1}. In addition 
we assume the existence of a finite Borel measure $\mathcal{R}$ on $\Gamma$ with the following property:  
For every $\varepsilon > 0$ there exists a (finite) partition 
$\left\lbrace\Gamma_\alpha^{m,\varepsilon}, 1\le m\le M_\alpha^{(\varepsilon)}\right\rbrace$ 
of $\Gamma_\alpha$ such that 
\begin{equation}\label{assumption}
\lim_{N\to\infty}
\frac{\#\left( \mathcal{A}_N\cap \Gamma_\alpha^{m,\varepsilon}\right)}{\mathcal{S}_{\mathcal{A}_N,\alpha}} 
= \mathcal{R}(\Gamma_\alpha^{m,\varepsilon}) , \quad 1\le m\le M_\alpha^{(\varepsilon)}    
\end{equation} 
and also $\lim_{N\to\infty} \mathcal{S}_{\mathcal{A}_N,\alpha} =\infty$, $1\le \alpha\le P$.  

W.r.t. this partition \ref{H1} and \ref{H3} of Hypothesis \ref{hyp1} are then replaced 
by the following stronger assumptions: 
\begin{itemize}
\item[(H1')] $\forall\ r,\tilde{r}\in \Gamma_\alpha^{m,\varepsilon}$, 
$$ 
\begin{aligned} 
& 2\left\langle x-y,f(t,r,x,{\omega^\prime})-f(t,\tilde{r},\tilde{x},{\omega^\prime})\right\rangle +\left\lvert g(t,r,x,
{\omega^\prime})-g(t,\tilde{r},\tilde{x},{\omega^\prime})\right\rvert^2 \\
& \qquad\qquad 
 + \int_{U}\left\lvert h(t,r,x,{\omega^\prime},\xi)-h(t,\tilde{r},\tilde{x},{\omega^\prime}, 
  \xi)\right\rvert^2\nu(\de \xi) 
\le L_t({\omega^\prime})\left[\left\lvert x-\tilde{x}\right\rvert^2 
+ \varepsilon \left(1+\left\lvert x\right\rvert^2\right)\right]\, , 
\end{aligned} 
$$ 
\item[(H4')] 
$\forall\ r,\tilde{r}\in \Gamma_\alpha^{m,\varepsilon}$, $\forall\ r^\prime ,
\tilde{r}^\prime\in \Gamma_{\alpha^\prime}^{m^\prime ,\varepsilon}$, 
$$ 
\begin{aligned} 
& \sum_{\Theta\in \left\lbrace \theta,\beta\right\rbrace} 
 \left\lvert \Theta(t,r,r^\prime,x,y_{-\tau:0},
 {\omega^\prime})-\Theta(t,\tilde{r},{\tilde{r}^\prime},\tilde{x},\tilde{y}_{-\tau:0}, 
  {\omega^\prime})\right\rvert^2 \\
& \qquad\qquad\qquad  
+\int_U \left\lvert \eta(t,r,r^\prime,x,y_{-\tau:0}, {\omega^\prime},\xi)
-\eta(t,\tilde{r},\tilde{r}^\prime, \tilde{x},\tilde{y}_{-\tau:0},{\omega^\prime},  
 \xi)\right\rvert^2\nu (\de \xi) \\
& 
\le \bar{L}_t({\omega^\prime})\Bigg[\left\lvert x-\tilde{x}\right\rvert^2+\int_{-\tau}^0\left[\left 
\lvert y_s-\tilde{y}_s\right\rvert^2+\mathbf{1}_{\left\lbrace s<0\right\rbrace}\left 
\lvert y_{s^+}-\tilde{y}_{s^+}\right\rvert^2\right]\lambda(\de s) 
\\ 
& \qquad\qquad\qquad  
+ \varepsilon \left(1+\left\lvert 
 x\right\rvert^2+\int_{-\tau}^0\left[\left\lvert y_s\right\rvert^2+\mathbf{1}_{\left\lbrace s<0\right\rbrace}\left\lvert y_{s^+}\right\rvert^2\right]\lambda(\de s)\right) \Bigg] \, . 
\end{aligned} 
$$ 
\end{itemize} 
\end{hyp}

\begin{lem}\label{continuity w.r.t r}
Under hypothesis \ref{hyp2} the solution $X({\omega^\prime})\in L^\infty([-\tau,T],\de t; 
L^2(\Omega\times \Gamma, \mathbb{P}\otimes \mathcal{R}; \mathbb{R}^d))$, ${\omega^\prime}\in 
\Omega^\prime$, of equation \eqref{equ2} satisfies
\[
\mathbb{E}\left\lvert X^{r}_t-X^{\tilde{r}}_t \right\rvert^2\le C_2(t,{\omega^\prime}) 
\varepsilon 
\qquad\forall\ r,\tilde{r}\in\Gamma_\alpha^{m,\varepsilon}\, , t \le T\, , 
\]
where
\[
C_2(t,{\omega^\prime}):=\exp\left[\int_0^t\left(L_s({\omega^\prime})+P\bar{L}_s({\omega^\prime})+P\right)\de s\right]\left(1+3C_1(t,{\omega^\prime})\right)\, . 
\]
\end{lem}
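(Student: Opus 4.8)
The plan is to exploit the special structure of equation \eqref{equ2}, in which \emph{all} spatial points $r$ are driven by the \emph{same} Brownian motions $W, B^\alpha$ and the same compensated Poisson measures $\tilde N, \tilde N^\alpha$, and in which the delay enters only through the common independent copy $\tilde X^{r'}$. Fix $r,\tilde r\in\Gamma_\alpha^{m,\varepsilon}$ and set $D_s:=X^r_s-X^{\tilde r}_s$. Since $r$ and $\tilde r$ lie in the same subpopulation $\Gamma_\alpha$ they share the initial datum $\hat z^\alpha$, so $D_s=0$ on $[-\tau,0]$ and in particular $\mathbb E|D_0|^2=0$. Moreover the delay paths $\tilde X^{r'}_{(s-\tau)^-:s^-}$ appearing in the $\theta,\beta,\eta$ terms are identical in the equations for $X^r$ and $X^{\tilde r}$, so $D$ solves an equation whose coefficients depend only on the current difference $D_{s^-}$ and on the spatial mismatch between $r$ and $\tilde r$; there is no memory term in $D$ itself.

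I apply It\^o's formula for jump diffusions to $\tilde\psi_t|D_t|^2$ with the weight
\[
\tilde\psi_t({\omega^\prime}):=\exp\left(-\int_0^t\left(L_s({\omega^\prime})+P\bar L_s({\omega^\prime})+P\right)\de s\right),
\]
localizing with $\sigma_n:=\tau_{n,r}\wedge\tau_{n,\tilde r}$ (the stopping times of Lemma \ref{boundedness}) and then sending $n\to\infty$ via Fatou's lemma. Taking expectations kills the martingale parts and leaves, besides $\mathbb E|D_0|^2=0$, the local-dynamics block $2\langle D_{s^-},f(s,r,X^r)-f(s,\tilde r,X^{\tilde r})\rangle+|g(s,r,X^r)-g(s,\tilde r,X^{\tilde r})|^2+\int_U|h(s,r,X^r,\xi)-h(s,\tilde r,X^{\tilde r},\xi)|^2\nu(\de\xi)$, the synaptic drift $2\langle D_{s^-},\sum_\alpha\Delta\theta_\alpha\rangle$, the synaptic diffusion/jump squares $\sum_\alpha\bigl(|\Delta\beta_\alpha|^2+\int_U|\Delta\eta_\alpha|^2\nu\bigr)$, and the negative weight term $-(L_s+P\bar L_s+P)|D_{s^-}|^2$, where $\Delta\Theta_\alpha:=\int_{\Gamma_\alpha}\tilde{\mathbb E}\bigl[\Theta(s,r,r',X^r,\tilde X^{r'})-\Theta(s,\tilde r,r',X^{\tilde r},\tilde X^{r'})\bigr]\mathcal R(\de r')$.

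The local-dynamics block is controlled by (H1') through $L_s\bigl[|D_{s^-}|^2+\varepsilon(1+|X^r_{s^-}|^2)\bigr]$. For the synaptic drift I use Young's inequality $2\langle D,\Delta\theta_\alpha\rangle\le|D|^2+|\Delta\theta_\alpha|^2$ and sum over the $P$ subpopulations, producing $P|D_{s^-}|^2+\sum_\alpha|\Delta\theta_\alpha|^2$. Each square $|\Delta\Theta_\alpha|^2$, and likewise $\int_U|\Delta\eta_\alpha|^2\nu$, is pulled inside the integrals by Jensen's inequality for $\tilde{\mathbb E}$ and for the measure $\mathcal R$ on $\Gamma_\alpha$ (recall $\mathcal R(\Gamma_\alpha)=1$), and then estimated by (H4') with the \emph{common} delay path, so that the $|y_s-\tilde y_s|^2$ terms drop out. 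Since $|D_{s^-}|^2$ does not depend on $r'$, integrating the $|x-\tilde x|^2$ part of (H4') over $\Gamma_\alpha$ and summing over $\alpha$ gives exactly $P\bar L_s|D_{s^-}|^2$. Collecting the coefficient of $|D_{s^-}|^2$ yields $L_s+P+P\bar L_s-(L_s+P\bar L_s+P)=0$: the weight $\tilde\psi$ is chosen precisely so that all quadratic self-terms cancel. This bookkeeping, forcing the exact cancellation, is the only genuine point requiring care; the localization/Fatou step and the Jensen estimates are routine.

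What remains are the $\varepsilon$-corrections $\varepsilon L_s(1+\mathbb E|X^r_s|^2)$ and $\varepsilon\bar L_s\sum_\alpha\int_{\Gamma_\alpha}\tilde{\mathbb E}\bigl(1+|X^r_s|^2+\int_{-\tau}^0(\cdots)\lambda(\de u)\bigr)\mathcal R(\de r')$. Invoking $\sup_{u\le t}\mathbb E|X^r_u|^2\le C_1(t,{\omega^\prime})$ from Lemma \ref{boundedness} (and the same for the copy $\tilde X^{r'}$), and using that $\lambda$ is a probability measure so the delay integral contributes at most $2C_1$, each bracket is at most $1+3C_1(t,{\omega^\prime})$; hence these corrections are dominated by $\varepsilon(1+3C_1(t,{\omega^\prime}))(L_s+P\bar L_s)$. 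Therefore
\[
\tilde\psi_t\,\mathbb E|D_t|^2\le\varepsilon\,(1+3C_1(t,{\omega^\prime}))\int_0^t\tilde\psi_s\,(L_s+P\bar L_s)\,\de s\le\varepsilon\,(1+3C_1(t,{\omega^\prime}))\,(1-\tilde\psi_t),
\]
because $\tilde\psi_s(L_s+P\bar L_s)\le\tilde\psi_s(L_s+P\bar L_s+P)=-\tilde\psi_s'$. Dividing by $\tilde\psi_t$ gives $\mathbb E|D_t|^2\le\varepsilon(1+3C_1(t,{\omega^\prime}))(\tilde\psi_t^{-1}-1)\le\varepsilon\,(1+3C_1(t,{\omega^\prime}))\exp\bigl(\int_0^t(L_s+P\bar L_s+P)\de s\bigr)=\varepsilon\,C_2(t,{\omega^\prime})$, which is the assertion.
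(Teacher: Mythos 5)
Your proposal is correct and takes essentially the same approach as the paper: the identical weight $\psi_t({\omega^\prime})=\exp\left[-\int_0^t\left(L_s({\omega^\prime})+P\bar{L}_s({\omega^\prime})+P\right)\de s\right]$ engineered so that the $\left\lvert X^r_{s^-}-X^{\tilde r}_{s^-}\right\rvert^2$ self-terms coming from (H1'), Young's inequality and (H4') cancel exactly, Jensen's inequality over $\tilde{\mathbb{E}}$ and $\mathcal{R}(\de r^\prime)$ with the common delay path $\tilde{X}^{r^\prime}$ killing the memory terms, and Lemma \ref{boundedness} bounding the $\varepsilon$-corrections by $\left(L_s({\omega^\prime})+P\bar{L}_s({\omega^\prime})\right)\left(1+3C_1(s,{\omega^\prime})\right)$. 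Your only deviations --- spelling out the vanishing initial difference, the localization/Fatou step, and the closing bound $\int_0^t\psi_s\left(L_s+P\bar{L}_s\right)\de s\le 1-\psi_t\le 1$ --- are explicit versions of steps the paper leaves implicit.
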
 

\begin{proof}
Let $\psi_t({\omega^\prime}):=\exp\left[-\int_0^t\left(L_s({\omega^\prime}) 
+ P\bar{L}_s({\omega^\prime})+P\right)\de s\right]$. To simplify notations, let 
$u := (s,r,X_{s^-}^r, \omega')$, $\tilde{u} := (s,\tilde{r},X_{s^-}^{\tilde{r}}, \omega')$,  
$v := (s,r,r', X_{s^-}^r, \tilde{X}_{(s-\tau )^- :s^-}^{r'}, \omega')$ and 
$\tilde{v} := (s,\tilde{r},r', X_{s^-}^{\tilde{r}}, 
\tilde{X}_{(s-\tau )^- :s^-}^{r'}, \omega')$.  
We then have

\begin{align*}
\psi_t({\omega^\prime})\mathbb{E}\left\lvert X^{r}_t-X^{\tilde{r}}_t\right\rvert^2  
& \le \mathbb{E}\int_0^t \psi_s({\omega^\prime})\Big[2\left\langle X^{r}_{s^-} 
- X^{\tilde{r}}_{s^-}, f\left(u\right) - f\left(\tilde{u}\right)\right\rangle 
+ \left\lvert g\left(u\right)-g\left(\tilde{u}\right)\right\rvert^2 \\  
& \qquad 
 + \int_U\left\lvert h\left(u,\xi\right)- h\left(\tilde{u},\xi\right)\right\rvert^2 \nu(\de \xi) 
 - \left(L_s({\omega^\prime})+P\bar{L}_s({\omega^\prime}) 
 + P\right)\left\lvert X^{r}_{s^-}-X^{\tilde{r}}_{s^-}\right\rvert^2\Big] \de s \\  
& \qquad 
 + \sum_{\alpha=1}^P\mathbb{E}\int_0^t \psi_s({\omega^\prime}) 
 \Bigg[ 2\left\langle X^{r}_{s^-} -X^{\tilde{r}}_{s^-}, \tilde{\mathbb{E}} 
 \int_{\Gamma_\alpha} \left( \theta \left( v\right) - \theta\left( \tilde{v}\right)\right) 
  \mathcal{R}(\de r^\prime)\right\rangle \\ 
 & \qquad  
 + \left\lvert \tilde{\mathbb{E}}\int_{\Gamma_\alpha} 
 \left(\beta\left(v \right) -\beta\left(\tilde{v}\right) \right) 
 \mathcal{R}(\de r^\prime)\right\rvert^2 
 + \int_U\left\lvert\tilde{\mathbb{E}}\int_{\Gamma_\alpha} 
 \left(\eta \left(v, \xi\right)-\eta\left(\tilde{v},\xi\right)\right) 
  \mathcal{R} (\de r^\prime)\right\rvert^2\nu (\de \xi)\Bigg]\de s \\ 
& \le \mathbb{E}\int_0^t \psi_s({\omega^\prime}) \varepsilon  
  \Big[L_s({\omega^\prime}) \left(1+\left\lvert X^{r}_{s^-}\right\rvert^2\right)
  + \bar{L}_s({\omega^\prime}) \Big( P+P\left\lvert X^{r}_{s^-}\right\rvert^2  \\ 
& \qquad\qquad   
 + \tilde{\mathbb{E}}\int_{\Gamma}\int_{-\tau}^0\left[\left\lvert
    \tilde{X}^{r^\prime}_{(s+u)^-}\right\rvert^2  
    +\mathbf{1}_{\left\lbrace u<0\right\rbrace} 
   \left\lvert \tilde{X}^{r^\prime}_{s+u}\right\rvert^2\right]\lambda(\de u) 
     \mathcal{R}(\de r^\prime)\Big)\Big] \de s \\ 
& \le \varepsilon \int_0^t \psi_s({\omega^\prime}) 
\left(P \bar{L}_s({\omega^\prime})+L_s({\omega^\prime})\right)\left(1+3C_1(s,{\omega^\prime}) 
\right)\de s \\
& \le \varepsilon (1+3C_1(t,{\omega^\prime})) \, . 
\end{align*}
Dividing by $\psi_t({\omega^\prime})$ we get the desired result.
\end{proof}

The following theorem now is our second main result: 

\begin{thm}
\label{PropagationChaos} 
Under Hypothesis \ref{hyp2} and the chaotic initial condition assumption (i.e.,
the initial conditions $z^r$ for $r\in \Gamma_\alpha\cap  
\left(\bigcup_{N\in\mathbb{N}} \mathcal{A}_N\right)$
are independent and identically distributed copies of $\hat{z}^\alpha$ with $\hat{z}^\alpha\in 
L^2(\Omega,\mathbb{P};\text{C\`adl\`ag}\,([-\tau,0];\mathbb{R}^d))$), the solution   
$\left(X^{r,\mathcal{A}_N}_t, -\tau\le t \le T\right)$ of the network equation \eqref{equ1} 
converges in the space $L^2 \left( \Omega^\prime, L^\infty \left( [0,T], L^2 \left( \Omega, 
\mathbb{E}\right) \right)\right)$ towards the process $\left(\bar{X}^r_t, -\tau\le t \le T\right)$ which 
is the solution of the mean-field equation \eqref{equ3}, i.e.
\[
\lim_{N\to\infty}\mathcal{E}\sup_{\substack{t\in [-\tau ,T] \\ r\in \mathcal{A}_N}} 
\mathbb{E}\left\lvert X^{r,\mathcal{A}_N}_t-\bar{X}^r_t\right\rvert^2=0\, . 
\]
\end{thm}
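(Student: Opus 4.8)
The plan is to run a synchronous coupling argument, reduce the problem to a law-of-large-numbers estimate for the empirical interaction, close the resulting inequality by Gronwall for each fixed disorder $\omega'$, and finally integrate over $\omega'$ by dominated convergence using \ref{H9}. Concretely, I would first use the measurability of the solution of \eqref{equ2} granted by Theorem \ref{existunique} together with the chaotic initial condition to attach to every network particle $X^{r,\mathcal{A}_N}$, $r\in\mathcal{A}_N$, an independent copy $\bar{X}^r$ of the mean-field solution, driven by exactly the same $W^r$, $B^{r,\alpha}$, $N^r$, $N^{r,\alpha}$ and the same initial path $z^r$. Setting $D^r_t:=X^{r,\mathcal{A}_N}_t-\bar{X}^r_t$ and choosing an exponential weight $\psi_t(\omega')$ in the spirit of Lemmas \ref{boundedness} and \ref{continuity w.r.t r}, with exponent tuned to absorb $L_s$ together with the $\bar{L}_s$-contributions of the interaction (so as to match the integrand of \ref{H9}), I would apply It\^o's formula to $\psi_t|D^r_t|^2$. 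Since the two processes share their driving noise and initial data, the initial term and all stochastic integrals vanish after taking $\mathbb{E}$, leaving the local-dynamics bracket, bounded by \ref{H1}, and the interaction brackets produced by $\theta$, $\beta$ and $\eta$.

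The core of the argument is the control of the interaction error. For each $\Theta\in\{\theta,\beta,\eta\}$ I would write the difference of the network and mean-field interactions as a sum of a \emph{coupling term} $A_\Theta^r$, in which $X^{r,\mathcal{A}_N}$ and $X^{\tilde{r},\mathcal{A}_N}$ are replaced by $\bar{X}^r$ and $\bar{X}^{\tilde{r}}$, and a \emph{law-of-large-numbers term}
\[
B_\Theta^r=\sum_{\alpha=1}^P\Big[\tfrac{1}{\mathcal{S}_{\mathcal{A}_N,\alpha}}\!\!\sum_{\tilde{r}\in\mathcal{A}_N\cap\Gamma_\alpha}\!\!\Theta\big(s,r,\tilde{r},\bar{X}^r_{s^-},\bar{X}^{\tilde{r}}_{(s-\tau)^-:s^-},\omega'\big)-\int_{\Gamma_\alpha}\tilde{\mathbb{E}}\big[\Theta\big(s,r,r',\bar{X}^r_{s^-},\hat{X}^{r'}_{(s-\tau)^-:s^-},\omega'\big)\big]\mathcal{R}(\de r')\Big].
\]
The coupling term is absorbed into the Gronwall functional through (H1') and (H4'): after integration against $\lambda$ and $\mathcal{R}$ it is dominated by $\bar{L}_s$ times $|D^r_{s^-}|^2$ and a time-averaged supremum of $\mathbb{E}|D^{\tilde{r}}|^2$, plus an $\varepsilon(1+|X^r|^2+\cdots)$ remainder controlled uniformly in $r$ by the second-moment bound $C_1$ of Lemma \ref{boundedness}.

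For $B_\Theta^r$ I would condition on $\bar{X}^r$ and split once more into a fluctuation part and a bias part. Conditionally on $\bar{X}^r$ the summands are independent in $\tilde{r}$ (distinct copies $\bar{X}^{\tilde{r}}$), so the conditional variance of the fluctuation is of order $\sum_\alpha \#(\mathcal{A}_N\cap\Gamma_\alpha)/\mathcal{S}_{\mathcal{A}_N,\alpha}^2$ times the second-moment bound furnished by \ref{H4''} and $C_1$; this tends to $0$ uniformly in $r$ since $\mathcal{S}_{\mathcal{A}_N,\alpha}\to\infty$ while $\#(\mathcal{A}_N\cap\Gamma_\alpha)/\mathcal{S}_{\mathcal{A}_N,\alpha}$ stays bounded. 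The bias part equals $\sum_\alpha\big[\tfrac{1}{\mathcal{S}_{\mathcal{A}_N,\alpha}}\sum_{\tilde{r}}\Psi(s,r,\tilde{r})-\int_{\Gamma_\alpha}\Psi(s,r,r')\mathcal{R}(\de r')\big]$ with $\Psi(s,r,r'):=\tilde{\mathbb{E}}[\Theta(s,r,r',\bar{X}^r,\hat{X}^{r'},\omega')]$; using the partition $\{\Gamma_\alpha^{m,\varepsilon}\}$ I would replace $\Psi(s,r,\cdot)$ by its value at a representative of each cell, the error being $O(\varepsilon)$ by (H4') and by the spatial regularity $\mathbb{E}|\bar{X}^{\tilde{r}}-\bar{X}^{r'}|^2\le C_2\varepsilon$ of Lemma \ref{continuity w.r.t r}, and then invoke the convergence \eqref{assumption} of the empirical masses over the finitely many cells, so that the bias tends to an $O(\varepsilon)$ quantity as $N\to\infty$, again uniformly in $r$.

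Collecting these bounds and setting $\phi_N(t):=\sup_{r\in\mathcal{A}_N}\sup_{u\in[-\tau,t]}\mathbb{E}|D^r_u|^2$, Gronwall's inequality applied for each fixed $\omega'$ yields $\phi_N(T)\le(\varepsilon_N+O(\varepsilon))\,e^{C(\omega',T)}$ with $\varepsilon_N\to0$; letting $N\to\infty$ and then $\varepsilon\to0$ gives $\phi_N(T)\to0$ for $\mathbb{P}^\prime$-a.e.\ $\omega'$. Since $\phi_N(T)$ is bounded uniformly in $N$ by a constant multiple of $C_1(T,\omega')$, the uniform-in-$N$ moment bound being precisely what the $\sup_N$ term in \ref{H9} provides, dominated convergence in $\omega'$, justified by the integrability \ref{H9} of the Gronwall exponent, upgrades this to $\mathcal{E}\,\phi_N(T)\to0$, which is the assertion. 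The main obstacle is the law-of-large-numbers step for $B_\Theta^r$: one must perform the fluctuation/bias split, the cell-wise replacement and the iterated passages $N\to\infty$, $\varepsilon\to0$ uniformly in $r\in\mathcal{A}_N$ and simultaneously for the drift ($\theta$), diffusion ($\beta$) and jump ($\eta$) interactions, all while retaining integrability in the disorder.
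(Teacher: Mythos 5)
Your proposal is correct and follows essentially the same route as the paper's proof: synchronous coupling with shared noise and initial data, the coupling/law-of-large-numbers split of the interaction error, the fluctuation--bias decomposition of the LLN term (conditioning on $\bar{X}^r$ so that cross terms vanish, giving the $O\bigl(\#(\mathcal{A}_N\cap\Gamma_\alpha)/\mathcal{S}_{\mathcal{A}_N,\alpha}^2\bigr)$ variance; cell-wise replacement via (H4') and Lemma \ref{continuity w.r.t r}; empirical-mass convergence \eqref{assumption}), then Gronwall pointwise in ${\omega^\prime}$ and integration in the disorder via \ref{H9}, with the iterated limits $N\to\infty$ then $\varepsilon\to 0$ --- exactly the paper's terms $I$, $II$, $III$. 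The one immaterial deviation is your closing dominated-convergence step with a uniform-in-$N$ moment dominator (whose integrability is not literally what \ref{H9} states, since the $\sup_N$ term there multiplies $\bar{L}_s$ rather than $\bar{K}_s$): the paper instead integrates its explicit Gronwall bound directly over ${\omega^\prime}$, for which \ref{H9} is precisely tailored, and thereby never needs moment estimates for the network solution itself --- your own explicit bound $(\varepsilon_N + O(\varepsilon))e^{C({\omega^\prime},T)}$ already permits this shortcut.
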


The proof of the theorem is given in Section \ref{ProofThmPropagationChaos}.

\section{Proof of Theorem \ref{existunique}}
\label{ProofOfThmExistunique}

\begin{proof}
\textbf{Existence:} Fix $n\in\mathbb{N}$ and define 
$\kappa(n,t):=\frac{k\tau}{n}$ for $t\in \left]\frac{k\tau}{n},\frac{(k+1)\tau}
{n}\right]$. We then define the process $X^{n,r}$ inductively as 
follows: Let $X^{n,r}_t := \hat{z}^\zeta_t$ for $t\in [-\tau,0]$ 
and $r\in \Gamma_\zeta$. Given that $X^{n,r}_t$ is defined 
for $t\le \frac{k\tau}{n}$ and for all $r\in\Gamma$ we extend 
$X^{n,r}_t$ for $t\in \left]\frac{k\tau}{n},\frac{(k+1)\tau}{n}\right]$ 
as the unique strong solution of
\begin{equation}
\label{df Xn}
\begin{split}
X^{n,r}_t &=X^{n,r}_{\frac{k\tau}{n}}+\int_{\frac{k\tau}{n}}^t f
\left(s,r, X^{n,r}_{s^-},{\omega^\prime}\right)\de s 
+ \int_{\frac{k\tau}{n}}^t g\left(s,r, X^{n,r}_{s^-},{\omega^\prime}
\right)\de W_s \\  
& \quad + \int_{\frac{k\tau}{n}}^t \int_U h\left(s,r, X^{n,r}_{s^-},{\omega^\prime}
,\xi\right)\tilde{N}(\de s,\de \xi) \\ 
& \quad + \sum_{\alpha=1}^P\int_{\frac{k\tau}{n}}^t
\tilde{\mathbb{E}}\int_{\Gamma_\alpha} \theta\left(s,r,r^\prime,X^{n,r}_{s^-} ,  Y^{n,r^\prime}_{\kappa(n,(s-\tau):s)},{\omega^\prime}
\right)\mathcal{R}(\de r^\prime)\de s \\ 
& \quad + \sum_{\alpha=1}^P\int_{\frac{k\tau}{n}}^t
\tilde{\mathbb{E}}\int_{\Gamma_\alpha} \beta
\left(s,r,r^\prime,X^{n,r}_{s^-},Y^{n,r^\prime}_{\kappa(n,(s-
\tau):s)},{\omega^\prime}\right)\mathcal{R}(\de r^\prime)\de B^\alpha_s \\ 
& \quad +\sum_{\alpha=1}^P\int_{\frac{k\tau}{n}}^t\int_U
  \tilde{\mathbb{E}}\int_{\Gamma_\alpha}  
  \eta \left(s,r,r^\prime,X^{n,r}_{s^-},Y^{n,r^\prime}_{\kappa(n,(s- \tau):s)},  
   {\omega^\prime},\xi\right)\mathcal{R}(\de r^\prime)\tilde{N}^\alpha(\de s,\de \xi)\, , 
\end{split}
\end{equation}
which exists and is measurable w.r.t. $(t,r,\omega, \omega^\prime )\in \left[\frac{k\tau}{n},\frac{(k+1)\tau}{n}\right]\times \Gamma\times \Omega\times \Omega^\prime$ and satisfies 
\[
1+\sup_{u\leq t}\mathbb{E}\left\lvert X^{n,r}_u\right\rvert^2\leq \left(1+\sup_{u\leq \frac{k\tau}{n}}\mathbb{E}\left\lvert X^{n,r}_u\right\rvert^2\right)\exp\left( \int_{k\tau/n}^t (K_s(\omega^\prime)+3P\bar{K}_s(\omega^\prime))\de s\right)
\]
according to Theorem \ref{thmA2}. Then for all $n\in \mathbb{N}$ and all $r\in \Gamma$,
\begin{equation}
\label{bddsquare}
\sup_{t\in[-\tau,T]}\mathbb{E}\left\lvert X^{n,r}_t\right\rvert^2\le C_1(T,{\omega^\prime}).
\end{equation}
Here $Y^{n,r^\prime}$ is an independent copy of $X^{n,r^\prime}$ on the probability space $(\tilde{\Omega},\tilde{\mathcal{F}},\tilde{\mathbb{P}})$ and the 
expectation $\tilde{\mathbb{E}}$ is taken with respect to $\tilde{\mathbb{P}}$ and 
\[Y^{n,r^\prime}_{\kappa(n,(s-\tau):s)}(u):=Y^{n,r^\prime}_{\kappa(n,s+u)},\quad u\in[-\tau,0].\]
We also use in the following the notation
\[Y^{n,r}_{\kappa(n,t^+)}:=\lim_{s\searrow t}Y^{n,r}_{\kappa(n,s)}.\]
For convenience we assume that $Y^{n,r}$ is obtained similar to 
$X^{n,r}$ using independent copies \[\left(\left\lbrace
\tilde{z}^\alpha\right\rbrace_{1\le \alpha\le P},\tilde{W},
\left\lbrace\tilde{B}^\alpha\right\rbrace_{1\le \alpha\le 
P},{\bar{N}},
\left\lbrace\bar{N}^\alpha\right\rbrace_{1\le \alpha\le 
P}\right)\] of $\left(\left\lbrace{z}^\alpha\right
\rbrace_{1\le \alpha\le P},{W},\left\lbrace{B}^\alpha\right
\rbrace_{1\le \alpha\le P},{N},
\left\lbrace {N}^\alpha\right\rbrace_{1\le \alpha\le 
P}\right)$. 
Note that $X^{n,r}$ is c\`adl\`ag, whereas the process $X^{n,r}_{\kappa(n,t)}$, $t\ge -\tau$, 
is c\`agl\`ad. It is easy to see, using induction w.r.t. to $k$, that 
$X^{(n)}_t$, $t\in \left] \frac{k\tau}n , \frac{(k+1)}n\right]$ is a.s. locally bounded 
and that the stochastic integrals are well-defined and local martingales up to time $+\infty$.  

\medskip 
\noindent 
Let us next define the remainder 
\[
p^{n,r}_t=X^{n,r}_{\kappa(n,t)}-X^{n,r}_{t^-}, \quad q^{n,r}_t=Y^{n,r}_{\kappa(n,t)}-Y^{n,r}_{t^-} , \quad t\in [-\tau,T]\, . 
\]
We can then write 
\begin{equation}  
\label{Xn=phi(Xn+pn)}
\begin{split}
X^{n,r}_t & =\hat{z}^\zeta(0)+\int_0^t f\left(s,r, X^{n,r}_{s^-} , 
{\omega^\prime}\right) \de s 
 +\int_0^t g\left(s,r, X^{n,r}_{s^-},{\omega^\prime}\right) 
\de W_s \\  
& \quad+\int_0^t \int_U h\left(s,r, X^{n,r}_{s^-} , \omega^\prime , \xi \right) 
\tilde{N}(\de s,\de \xi) \\ 
& \quad + \sum_{\alpha=1}^P \int_0^t\tilde{\mathbb{E}}\int_{\Gamma_\alpha}
\theta\left(s,r,r^\prime,X^{n,r}_{s^-} , Y^{n,r^\prime}_{(s-\tau)^-:s^-}
+q^{n,r^\prime}_{(s-\tau):s},{\omega^\prime}\right)\mathcal{R}(\de r^\prime)\de s \\  
& \quad + \sum_{\alpha=1}^P\int_0^t\tilde{\mathbb{E}}\int_{\Gamma_\alpha}
\beta\left(s,r,r^\prime,X^{n,r}_{s^-} , Y^{n,r^\prime}_{(s-
\tau)^-:s^-}+q^{n,r^\prime}_{(s-\tau):s},{\omega^\prime}\right)\mathcal{R}(\de r^\prime)\de B^
\alpha_s \\ 
& \quad + \sum_{\alpha=1}^P\int_0^t\int_U\tilde{\mathbb{E}}\int_{\Gamma_\alpha}
\eta\left(s,r,r^\prime,X^{n,r}_{s^-} , Y^{n,r^\prime}_{(s-
\tau)^-:s^-}+q^{n,r^\prime}_{(s-\tau):s},{\omega^\prime},\xi\right)\mathcal{R} 
(\de r^\prime)\tilde{N}^\alpha(\de s, \de \xi).
\end{split}
\end{equation}
In the next step let us define the stopping times 
\[
\tau^{n,r}_R:=\inf\left\lbrace t\ge 0: \left\lvert 
X^{n,r}_t\right\rvert>\frac{R}{3}\right\rbrace 
\]
for given $R > 0$. Then  
\[
\left\lvert p^{n,r}_{t}\right\rvert \le \frac{2R}{3}, \quad \left
\lvert X^{n,r}_{t^-}\right\rvert,\left
\lvert X^{n,r}_{\kappa(n,t)}\right\rvert \le \frac{R}{3},\qquad t\in(0,
\tau^{n,r}_R]. 
\]
We now prove the following properties which complete the 
existence proof. 
\begin{enumerate}[label=(\roman{*})]
\item \label{i} For all $t> -\tau$,   
$\mathbf{1}_{(-\tau,\tau^{n,r}_R]}(t)\left\lvert p^{n,r}_t\right\rvert \to 0$ in probability as   
$n\to\infty$. 
\item \label{ii} For any stopping time $\tau^*\le T\wedge\tau^{n,r}_R$ we have
$\mathbb{E}\left\lvert X^{n,r}_{\tau^*}\right\rvert^2\le C(T,{\omega^\prime})$. 
\item \label{iii} $\lim_{R\to \infty}\limsup_{n\to\infty}\mathbb{P}\left\lbrace \tau^{n,r}_R < T\right\rbrace=0$.
\item \label{iv} $\forall \varepsilon>0, \lim_{n,m\to\infty}\mathbb{P}\left\lbrace \sup_{t\in[0,T]}\left\lvert X^{n,r}_t-X^{m,r}_t\right\rvert >\varepsilon\right\rbrace=0$.
\item \label{v}  $\exists X: \forall \varepsilon>0, \lim_{n\to\infty}\mathbb{P}\left\lbrace \sup_{t\in[0,T]}\left\lvert X^{n,r}_t-X^{r}_t\right\rvert >\varepsilon\right\rbrace=0$ and $X$ is a strong solution of equation \eqref{equ2}.
\end{enumerate}

\paragraph*{Proof of \ref{i}:} Since $\hat{z}^\zeta$ is c\`adl\`ag w.r.t. time, it follows that $\mathbf{1}_{(-\tau,0]}(t)\left\lvert p^{n,r}_t\right\rvert \to 0$ almost surely. Using \eqref{df Xn} and Hypothesis \ref{hyp1}, we have
\begin{align*}
\mathbb{P}\big\lbrace \left\lvert p^{n,r}_t\right\rvert 
&  \ge \varepsilon, 0<t \le 
\tau^{n,r}_R\big\rbrace 
\\ 
& \le \mathbb{P}\left\lbrace \int_{\kappa(n,t)}^t \left(\sup_{\left\lvert x\right\rvert 
\le R}\left\lvert f(s,r,x,{\omega^\prime})\right\rvert  
+ P\sqrt{\bar{K}_s({\omega^\prime})\left(1+R^2+2C_1(t,{\omega^\prime})\right)}\right)\de s\ge 
\varepsilon/5\right\rbrace
\\ 
& \quad + \mathbb{P}\left\lbrace \left\lvert\int_{\kappa(n,t)}^t  
g\left(s,r,X^{n,r}_{s^-},
{\omega^\prime}\right)\de W_s\right\rvert\ge \varepsilon/5 , t \le \tau^{n,r}_R \right\rbrace 
\\ 
& \quad + \mathbb{P}\left\lbrace \left\lvert\int_{\kappa(n,t)}^{t^-}\int_U 
h\left(s,r,X^{n,r}_{s^-},\omega^\prime,\xi\right) 
\tilde{N}(\de s,\de \xi)\right\rvert\ge \varepsilon/5 , t \le 
\tau^{n,r}_R \right\rbrace 
\\ 
& \quad +\sum_{\alpha=1}^P\mathbb{P}\left\lbrace \left\lvert 
\int_{\kappa(n,t)}^t\tilde{\mathbb{E}}\int_{\Gamma_\alpha}\beta\left(s,r,r^\prime ,  
X^{n,r}_{s^-},Y^{n,r^\prime}_{\kappa(n,(s-\tau):s)},{\omega^\prime}\right)
\mathcal{R}(\de r^\prime)\de B^\alpha_s\right\rvert\ge \frac{\varepsilon}{5P}, 
t\le\tau^{n,r}_R \right\rbrace 
\\ 
& \quad + \sum_{\alpha=1}^P\mathbb{P}\left\lbrace \left\lvert \int_{\kappa(n,t)}^{t^-}\int_U 
\tilde{\mathbb{E}}\int_{\Gamma_\alpha}\eta\left(s,r,r^\prime,X^{n,r}_{s^-}, 
Y^{n,r^\prime}_{\kappa(n,(s-\tau):s)},{\omega^\prime},\xi\right)\mathcal{R}(\de 
r^\prime)\tilde{N}^\alpha(\de s,\de \xi)\right\rvert\ge \frac{\varepsilon}{5P} , t \le 
\tau^{n,r}_R \right\rbrace 
\end{align*} 
which can be further estimated from above by 
\begin{align*}
\hphantom{klklklkl} 
& \le\mathbb{P}\left\lbrace\int_{\kappa(n,t)}^t \left( \tilde{K}_s(R,{\omega^\prime})  
+ P\sqrt{\bar{K}_s({\omega^\prime})\left(1+R^2+2C_1(t,{\omega^\prime})\right)}\right)\de s 
\ge \varepsilon/5\right\rbrace 
\\ 
& \quad + \frac{25}{\varepsilon^2}\mathbb{E}\left(\left\lvert\int_{\kappa(n,t)}^t 
\mathbf{1}_{\left\lbrace s\le\tau^{n,r}_R\right\rbrace} g\left(s,r,X^{n,r}_{s^-},{\omega^\prime}\right)\de W_s\right\rvert^2 
\right)
\\ 
& \quad + \frac{25}{\varepsilon^2}\mathbb{E}\left( \left\lvert \int_{\kappa(n,t)}^{t^-} 
\int_U\mathbf{1}_{\left\lbrace s\le\tau^{n,r}_R\right\rbrace} h\left(s,r,X^{n,r}_{s^-},{\omega^\prime},\xi\right)
\tilde{N}(\de s,\de \xi)\right\rvert^2 \right)
\\ 
& \quad + \frac{25P^2}{\varepsilon^2}\sum_{\alpha=1}^P\mathbb{E}\left(\left\lvert 
\int_{\kappa(n,t)}^t \tilde{\mathbb{E}} \int_{\Gamma_\alpha}\mathbf{1}_{\left\lbrace s\le\tau^{n,r}_R\right\rbrace}\beta\left(s,r,r^\prime , 
X^{n,r}_{s^-}, Y^{n,r^\prime}_{\kappa(n,(s-\tau):s)},{\omega^\prime}\right)
\mathcal{R}(\de r^\prime)\de B^\alpha_s\right\rvert^2 \right) 
\\ 
& \quad + \frac{25P^2}{\varepsilon^2}\sum_{\alpha=1}^P\mathbb{E} \left( \left\lvert 
\int_{\kappa(n,t)}^{t^-}\int_U \tilde{\mathbb{E}} \int_{\Gamma_\alpha} \mathbf{1}_{\left\lbrace s\le\tau^{n,r}_R\right\rbrace}\eta 
\left(s,r,r^\prime,X^{n,r}_{s^-}, Y^{n,r^\prime}_{\kappa(n,(s-\tau):s)},\omega^\prime,
\xi\right)\mathcal{R}(\de r^\prime)\tilde{N}^\alpha(\de s,\de \xi)\right\rvert^2 
\right) 
\end{align*} 
and finally by 
\begin{align*} 
\hphantom{klklkl}  
& \le \mathbb{P}\left\lbrace\int_{\kappa(n,t)}^t \left(\tilde{K}_s(R,
{\omega^\prime})+P\sqrt{\bar{K}_s({\omega^\prime})\left(1+R^2+2C_1(t,
{\omega^\prime})\right)}\right)\de s\ge \varepsilon/5\right\rbrace\\&\quad +\frac{25}
{\varepsilon^2}\int_{\kappa(n,t)}^t \tilde{K}_s(R,{\omega^\prime})\de s+\frac{25P^3}
{\varepsilon^2}\int_{\kappa(n,t)}^t \bar{K}_s({\omega^\prime})\left(1+R^2+2C_1(t,
{\omega^\prime})\right)\de s\, . 
\end{align*}
So
\[
\limsup_{n\to \infty}\mathbb{P}\left\lbrace \left\lvert p^{n,r}_t\right\rvert\ge \varepsilon, -\tau<t\le \tau^{n,r}_R\right\rbrace = 0
\]
which implies \ref{i}.

\paragraph*{Proof of \ref{ii}:} Let $\tau^*$ be a stopping time such that  
$\tau^*\le T\wedge \tau^{n,r}_R$. Similar to the proof of the corresponding statement 
in Theorem \ref{thmA2} we have that 
\begin{align*}
\mathbb{E}\left\lvert X^{n,r}_{\tau^*}\right\rvert^2
& \le \mathbb{E}\left\lvert \hat{z}^\zeta_0\right\rvert^2+\mathbb{E}\int_0^{\tau^*} 
\bigg[2\left\langle X^{n,r}_{s^-} ,f\left(s,r,X^{n,r}_{s^-}, \omega^\prime 
\right)\right\rangle   
\\ 
& \qquad\qquad\qquad\qquad\qquad 
+ \left\lvert g\left(s,r,X^{n,r}_{s^-}, \omega^\prime \right) \right\rvert^2 
+ \int_U \left\lvert h\left(s,r,X^{n,r}_{s^-}, \omega^\prime ,\xi \right) 
\right \rvert^2\nu(\de \xi)\bigg]\de s \\ 
& \quad + \sum_{\alpha=1}^P\mathbb{E}\int_0^{\tau^*} 
\tilde{\mathbb{E}}\int_{\Gamma_\alpha}\bigg[2\left\langle X^{n,r}_{s^-}, 
\theta\left(s,r,r^\prime,X^{n,r}_{s^-}, Y^{n,r^\prime}_{\kappa(n,(s -\tau):s)} ,
\omega^\prime \right)\right\rangle 
\\ 
& \qquad\qquad\qquad\qquad\qquad  
+ \left\lvert \beta\left(s,r,r^\prime,X^{n,r}_{s^-},  
Y^{n,r^\prime}_{\kappa(n,(s-\tau):s)}, \omega^\prime \right)\right\rvert^2
\\
& \qquad\qquad\qquad\qquad\qquad 
+ \int_U \left\lvert \eta \left(s,r,r^\prime,X^{n,r}_{s^-},  
Y^{n,r^\prime}_{\kappa(n,(s-\tau):s)},  
\omega^\prime ,\xi\right)\right\rvert^2\nu (\de \xi)\bigg]\mathcal{R}(\de r^\prime)\de s 
\\ 
& \le \mathbb{E}\left\lvert \hat{z}^\zeta_0\right\rvert^2 
+ \mathbb{E}\int_0^T\bigg[ \left(K_s({\omega^\prime}) + P\bar{K}_s({\omega^\prime}) + P\right) 
\left(1+\left\lvert X^{n,r}_{s^-}\right\rvert^2 \right) 
\\ 
& \qquad\qquad\qquad\qquad\qquad 
+ \bar{K}_s({\omega^\prime})
\tilde{\mathbb{E}}\int_\Gamma\int_{-\tau}^0\left[\left\lvert 
Y^{n,r^\prime}_{\kappa(n,s+u)}\right\rvert^2 
+ \mathbf{1}_{\left\lbrace u<0\right\rbrace}\left\lvert Y^{n,r^\prime}_{\kappa(n,(s+u)^+)} 
\right \rvert^2\right] \lambda(\de u)\mathcal{R}(\de r^\prime)\bigg]\de s
\\
&  \le  \mathbb{E}\left\lvert \hat{z}^\zeta_0\right\rvert^2 + \left(1+C_1(T,{\omega^\prime}) 
\right) \int_0^T \left(K_s({\omega^\prime})+3P\bar{K}_s({\omega^\prime})+P\right)\de s
\\ 
& \le C(T,{\omega^\prime})\, . 
\end{align*}
Here we have used that for $t\in (0,T]$  
$$
\mathbb{E}\lvert X_{t-}^{n,r}\rvert^2 
\leq \liminf_{\delta\downarrow 0} \mathbb{E}\lvert X_{t-\delta}^{n,r}\rvert^2 
\le \sup_{s\in [-\tau ,T]} \mathbb{E}\lvert X_s^{n,r}\rvert^2  \le C_1 (T, \omega '). 
$$ 
\paragraph*{Proof of \ref{iii}:}
Let 
\[
\tau^* = T\wedge\tau^{n,r}_R\wedge \inf\left\lbrace t\ge 0:\left\lvert X^{n,r}_t\right\rvert 
\ge a\right\rbrace 
\]
in \ref{ii}, we get
\[ 
\mathbb{P}\left\lbrace \sup_{t\in[0,T\wedge\tau^{n,r}_R]}\left\lvert X^{n,r}_t\right\rvert 
\ge a\right\rbrace\le \frac{1}{a^2}\mathbb{E}\left\lvert X^{n,r}_{\tau^*} \right \rvert^2 \le 
\frac{C(T,{\omega^\prime})}{a^2}. 
\]
So
\begin{align*}
\limsup_{R\to\infty} \limsup_{n\to \infty}\, & \mathbb{P}\left\lbrace \sup_{t\in[0,
\tau^{n,r}_R]}\left\lvert X^{n,r}_t\right\rvert\ge \frac{R}{4};\tau^{n,r}_R\le T \right\rbrace  
\le \limsup_{R\to\infty}\limsup_{n\to \infty}\mathbb{P}\left\lbrace 
\sup_{t\in[0,T\wedge\tau^{n,r}_R]}\left\lvert X^{n,r}_t\right\rvert\ge \frac{R}{4}\right\rbrace 
\\  
& = \limsup_{R\to\infty}\limsup_{n\to\infty}\left(\frac{16C(T,{\omega^\prime})}{R^2}\right)=0.
\end{align*}
Hence we have 
\begin{align*}
\limsup_{R\to\infty}\limsup_{n\to \infty}\mathbb{P}\left\lbrace \tau^{n,r}_R\le T\right\rbrace 
& \le\limsup_{R\to\infty}\limsup_{n\to \infty}\mathbb{P}\left\lbrace\sup_{t\in[0,\tau^{n,r}_R]}  
\left\lvert X^{n,r}_t\right\rvert\ge \frac{R}{4};\tau^{n,r}_R\le T \right\rbrace=0
\end{align*}
which completes the proof of \ref{iii}. 

\paragraph*{Proof of \ref{iv}:} Let $\tau^{n,m,r}_R:=T\wedge 
\tau^{n,r}_R \wedge \tau^{m,r}_R$ and let  
\[
\tilde{\tau}^{n,r}_R:=\inf\left\lbrace t\ge 0:\left\lvert Y^{n,r}_t\right\rvert 
 > \frac{R}{3}\right\rbrace , \quad \tilde{\tau}^{n,m,r}_R:=T\wedge \tilde{\tau}^{n,r}_R 
\wedge \tilde{\tau}^{m,r}_R. 
\]
To shorten notations let $u^n_s := (s,r,X_{s^-}^{n,r}, \omega')$ and 
$v^n_s := (s,r,r', X_{s^-}^{n,r})$. Using It\^o's formula, we have for any stopping 
time $\bar{\tau}\le t\wedge\tau^{n,m,r}_R$, $t\in[0,T]$, that
\begin{align*} 
\mathbb{E}\big\lvert X^{n,r}_{\bar{\tau}} 
& - X^{m,r}_{\bar{\tau}}\big\rvert^2 
\le \mathbb{E}\int_0^{\bar{\tau}} 2\left\langle X^{n,r}_{s^-}-X^{m,r}_{s^-} , 
f\left(u_s^n\right)-f\left(u_s^m\right) \right\rangle\de s 
\\ 
&  + \mathbb{E}\int_0^{\bar{\tau}} \left\lvert g\left(u_s^n\right)-g\left(u_s^m\right) 
\right\rvert^2\de s 
+ \mathbb{E}\int_0^{\bar{\tau}}\int_U \left\lvert h\left(u_s^n, \xi\right) 
 -h\left(u_s^m,\xi\right)\right\rvert^2\nu(\de \xi)\de s 
\\ 
&  + \sum_{\alpha=1}^P\mathbb{E}\int_0^{\bar{\tau}} 2\bigg\langle  
X^{n,r}_{s^-}- X^{m,r}_{s^-},
\tilde{\mathbb{E}}\int_{\Gamma_\alpha}\bigg[\theta\left(v_s^n, 
Y^{n, r^\prime}_{\kappa (n,(s-\tau):s)}, \omega^\prime \right) 
-\theta\left(v_s^m, Y^{m,r^\prime}_{\kappa (m,(s-\tau):s)}, \omega^\prime \right) \bigg] 
\mathcal{R}(\de r^\prime)\bigg\rangle\de s 
\\
& + \sum_{\alpha=1}^P\mathbb{E}\int_0^{\bar{\tau}}\bigg\lvert 
\tilde{\mathbb{E}}\int_{\Gamma_\alpha}\bigg[\beta\left(v_s^n, 
Y^{n,r^\prime}_{\kappa (n,(s-\tau):s)},{\omega^\prime}\right) 
-\beta\left(v_s^m , Y^{m,r^\prime}_{\kappa (m,(s-\tau):s)},
\omega^\prime\right)\bigg]\mathcal{R}(\de r^\prime)\bigg\rvert^2\de s 
\\ 
&  + \sum_{\alpha=1}^P\mathbb{E}\int_0^{\bar{\tau}}\int_U\bigg\lvert 
\tilde{\mathbb{E}}\int_{\Gamma_\alpha}\bigg[\eta\left(v_s^n, 
Y^{n,r^\prime}_{\kappa (n, (s-\tau):s)}, \omega^\prime ,\xi\right) 
-\eta\left(v_s^m, Y^{m,r^\prime}_{\kappa (m,(s-\tau):s)}, \omega^\prime,\xi\right) \bigg] 
\mathcal{R}(\de r^\prime)\bigg\rvert^2\nu (\de \xi)\de s\, . 
\end{align*}
Hypothesis \ref{hyp1} implies
\begin{align*}
\mathbb{E}\big\lvert X^{n,r}_{\bar{\tau}} 
& - X^{m,r}_{\bar{\tau}}\big\rvert^2  
\\  
& \le \int_0^t \left(L_s({\omega^\prime}) + P + P 
\bar{L}_s({\omega^\prime}) \right)\sup_{u\in [0,s]} \mathbb{E}  
\left\lvert X^{n,r}_{u\wedge\bar{\tau}}-X^{m,r}_{u\wedge\bar{\tau}}  \right\rvert^2  
\de s \\
& \quad 
+ \sum_{\alpha=1}^P\mathbb{E}\int_0^{\bar{\tau}}  2\bar{L}_s({\omega^\prime}) 
 \tilde{\mathbb{E}}\int_{\Gamma_\alpha} 
\Bigg(\mathbf{1}_{\left\lbrace s\le{\tilde{\tau}}^{n,m,r^\prime}_R 
\right\rbrace}\int_{-\tau}^0\Big[\left\lvert Y^{n,r^\prime}_{\kappa (n,s+u)} 
-Y^{m,r^\prime}_{\kappa (m,s+u)}\right\rvert^2 
\\ 
& \qquad\qquad\qquad\qquad\qquad 
+ \mathbf{1}_{\left\lbrace u<0\right\rbrace}\left\lvert Y^{n,r^\prime}_{\kappa (n,(s+u)^+)} 
- Y^{m,r^\prime}_{\kappa (m,(s+u)^+)} 
\right\rvert^2\Big]\lambda(\de u)\Bigg)\mathcal{R}(\de r^\prime)\de s 
\\ 
& \quad 
+ \sum_{\alpha=1}^P\mathbb{E}\int_0^{\bar{\tau}}\Bigg[4\bar{K}_s ({\omega^\prime}) 
\tilde{\mathbb{E}}\left(\int_{\Gamma_\alpha}  \mathbf{1}_{\left\lbrace s >  
{\tilde{\tau}}^{n,m,r^\prime}_R \right \rbrace}\mathcal{R}(\de r^\prime)\right)\cdot\tilde{\mathbb{E}} \int_{\Gamma_\alpha} 
\Big( 2 + \left\lvert X^{n,r}_{s^-}\right\rvert^2 
+ \left\lvert X^{m,r}_{s^-} \right\rvert^2 +  
\int_{-\tau}^0 \Big[\left \lvert Y^{n,r^\prime}_{\kappa(n,s+u)} \right\rvert^2
\\ 
& \qquad\qquad\qquad  
+\left\lvert Y^{m,r^\prime}_{\kappa(m,s+u)}\right\rvert^2 
+ \mathbf{1}_{\left\lbrace u<0\right\rbrace}\left(\left\lvert Y^{n,r^\prime}_{\kappa(n,
(s+u)^+)}\right\rvert^2 + \left\lvert Y^{m,r^\prime}_{\kappa(m,(s+u)^+)}\right\rvert^2 
\right)\Big] \lambda(\de u) \Big) \mathcal{R}(\de r^\prime ) \Bigg]\de s 
\end{align*}
where we separate the two cases ${\left\lbrace s>{\tilde{\tau}}^{n,m,r^\prime}_R\right\rbrace}$  
and ${\left\lbrace s\le {\tilde{\tau}}^{n,m,r^\prime}_R\right\rbrace}$ in order to apply 
Gronwall's inequality to the difference $\mathbb{E}\big\lvert X^{n,r}_{s\wedge\bar{\tau}} 
- X^{m,r}_{s\wedge\bar{\tau}}\big\rvert^2$. Using \ref{ii} we obtain that
\begin{equation} 
\label{beforegronwall}
\begin{aligned}
\mathbb{E} & 
\left\lvert X^{n,r}_{\bar{\tau}} - X^{m,r}_{\bar{\tau}}\right\rvert^2 
\\
& \le \Bigg[\int_0^t 6 \bar{L}_s({\omega^\prime}) 
\mathbb{E}\int_{\Gamma}\int_{-\tau}^0\mathbf{1}_{[0,\tau^{n,m,r^\prime}_R]}(s)\left[ 
\left\lvert p^{n,r^\prime}_{s+u}\right\rvert^2 
+ \left\lvert p^{m,r^\prime}_{s+u}\right\rvert^2+\mathbf{1}_{\left\lbrace u<0 \right \rbrace} 
\left(\left\lvert p^{n,r^\prime}_{(s+u)^+}\right\rvert^2 
+ \left\lvert p^{m,r^\prime}_{(s+u)^+}\right\rvert^2\right)\right]\lambda(\de u)  
\mathcal{R}(\de r^\prime)\de s
\\
& \qquad + 8\left(1+3C_1(T,{\omega^\prime})\right) 
\int_0^t\bar{K}_s({\omega^\prime}) 
\int_\Gamma\mathbb{P}\left\lbrace s>\tau^{n,m,r^\prime}_R\right\rbrace  
\mathcal{R}(\de r^\prime)\de s\Bigg]
\\  
& \quad + \int_0^t\left[ \left(L_s({\omega^\prime}) 
+ 13 P \bar{L}_s({\omega^\prime})+P\right)\sup_{r^\prime\in \Gamma} 
\sup_{u\in [0,s]}\mathbb{E}\left\lvert X^{n,r^\prime}_{u\wedge \tau^{n,m,r^\prime}_R} 
-X^{m,r^\prime}_{u\wedge \tau^{n,m,r^\prime}_R}\right\rvert^2\right]\de s \\ 
& = I^{n,m}_{R,T}({\omega^\prime})  
+ \int_0^t\left[ \left(L_s({\omega^\prime}) 
+ 13 P\bar{L}_s({\omega^\prime}) + P\right)\sup_{r^\prime\in \Gamma} 
\sup_{u\in [0,s]}\mathbb{E}\left\lvert X^{n,r^\prime}_{u\wedge \tau^{n,m,r^\prime}_R}-
X^{m,r^\prime}_{u\wedge \tau^{n,m,r^\prime}_R}\right\rvert^2\right]\de s
\end{aligned}
\end{equation} 
Choosing $\bar{\tau} = t\wedge\tau^{n,m,r}_R$ for $t\in [0,T]$ we obtain by Gronwall's 
inequality 
\begin{equation}\label{intgronwall}
\sup_{r\in \Gamma}\sup_{t\in[0,T]}\mathbb{E}\left\lvert X^{n,r}_{t\wedge\tau^{n,m,r}_R}- X^{m,r}_{t\wedge\tau^{n,m,r}_R}\right\rvert^2\le C(T,{\omega^\prime}) I^{n,m}_{R,T}({\omega^\prime}).
\end{equation}
Inserting this bound in the left hand side of \eqref{beforegronwall} implies 
\[
\mathbb{E}\left\lvert X^{n,r}_{\bar{\tau}}- X^{m,r}_{\bar{\tau}}\right\rvert^2 
\le  C(T,{\omega^\prime}) I^{n,m}_{R,T}({\omega^\prime})\, . 
\]
Note that $C(T,{\omega^\prime})$ may differ from a line to another line but always 
$T\mapsto C(T,{\omega^\prime})$ is an increasing function. By setting 
\[
\bar{\tau} := \tau^{n,m,r}_R\wedge \inf\left\lbrace t\ge 0:\left\lvert 
X^{n,r}_t-X^{m,r}_t\right\rvert\ge \varepsilon\right\rbrace, 
\]
we have
\begin{align*}
\mathbb{P}\left\lbrace \sup_{t\in[0,T]}\left\lvert X^{n,r}_t-X^{m,r}_t\right\rvert 
\ge \varepsilon\right\rbrace 
& 
\le \mathbb{P}\left\lbrace T>\tau^{n,r}_R\right\rbrace 
+ \mathbb{P}\left\lbrace T>\tau^{m,r}_R\right\rbrace 
+ \mathbb{P}\left\lbrace \sup_{t\in[0,\tau^{n,m,r}_R]}\left\lvert X^{n,r}_t-X^{m,r}_t \right 
\rvert \ge \varepsilon\right\rbrace 
\\ 
& \le \mathbb{P}\left\lbrace T>\tau^{n,r}_R\right\rbrace  
+ \mathbb{P}\left\lbrace T>\tau^{m,r}_R\right \rbrace + \frac{1}{\varepsilon^2} \mathbb{E} 
\left\lvert X^{n,r}_{\bar{\tau}}- X^{m,r}_{\bar{\tau}}\right\rvert^2 
\\ 
& \le \mathbb{P} \left\lbrace T>\tau^{n,r}_R\right\rbrace 
+ \mathbb{P} \left \lbrace T>\tau^{m,r}_R\right\rbrace 
+ \frac{C(T,{\omega^\prime}) I^{n,m}_{R,T}}{\varepsilon^2}\, .
\end{align*}
From \ref{i} and \ref{iii}, one can obtain that 
\[
\lim_{R\to\infty}\limsup_{n,m\to \infty}I^{n,m}_{R,T}=0 
\]
and so
\begin{align*}
\limsup_{n,m\to\infty}\mathbb{P}\left\lbrace \sup_{t\in[0,T]} 
\left\lvert X^{n,r}_t-X^{m,r}_t\right\rvert\ge \varepsilon\right\rbrace 
\le \lim_{R\to\infty}\limsup_{n,m\to\infty}\left[\mathbb{P} 
\left\lbrace T>\tau^{n,r}_R\right\rbrace 
+ \mathbb{P}\left\lbrace T>\tau^{m,r}_R\right\rbrace 
+ \frac{C(T,{\omega^\prime})I^{n,m}_{R,T}}{\varepsilon^2} \right]=0\, . 
\end{align*}
So \ref{iv} is obtained.

\paragraph*{Proof of \ref{v}:} Since the space 
$L^2\left(\Omega, \text{C\`adl\`ag}([-\tau,T],E)\right)$ is 
complete with respect to the topology of convergence in probability, \ref{iv} yields that there exist  
$X^{r}, Y^r\in L^2\left(\Omega, \text{C\`adl\`ag}([-\tau,T],E)\right)$ such that
\[
\lim_{n\to\infty}\mathbb{P}\left\lbrace \sup_{t\in[0,T]} 
\left\lvert X^{n,r}_t-X^{r}_t\right\rvert\ge \varepsilon\right\rbrace=0,\quad \lim_{n\to\infty}\tilde{\mathbb{P}}\left\lbrace \sup_{t\in[0,T]} 
\left\lvert Y^{n,r}_t-Y^{r}_t\right\rvert\ge \varepsilon\right\rbrace=0. 
\]
We next have to show that all terms of equation \eqref{Xn=phi(Xn+pn)} for a subsequence of 
$n\in\mathbb{N}$ converge almost surely to the terms of equation \eqref{equ2}. We have
\begin{align*}
\lim_{n\to\infty}\mathbb{P} & \left\lbrace \sup_{t\in[0,T]} 
\left\lvert Y^{n,r}_{\kappa(n,t)}-Y^{r}_{t^-}\right\rvert\ge \varepsilon\right\rbrace 
\\ 
& \le \lim_{n\to\infty}\mathbb{P}\left\lbrace \sup_{t\in[0,T]} 
\left\lvert Y^{n,r}_{\kappa(n,t)}-Y^{r}_{\kappa(n,t)}\right\rvert \ge \varepsilon/2\right\rbrace 
 +\lim_{n\to\infty}\mathbb{P}\left\lbrace \sup_{t\in[0,T]} 
\left\lvert Y^{r}_{\kappa(n,t)}-Y^{r}_{t^-}\right\rvert\ge \varepsilon/2\right\rbrace=0 \, . 
\end{align*} 
So there exists a subsequence, say $\left\lbrace n_l\right\rbrace_{l\in\mathbb{N}}$, such that,  
as $l\to\infty$, 
\begin{equation}\label{kappalimit}
\sup_{t\in[0,T]}\left[\left\lvert X^{n_l,r}_{t^-}-X^{r}_{t^-}\right\rvert+\left\lvert Y^{n_l,r}_{\kappa(n_l,t)}-Y^{r}_{t^-}\right\rvert\right]\to 0, 
\quad \mathbb{P}\otimes \tilde{\mathbb{P}}-a.s. 
\end{equation}
for all  $(r,\omega^\prime)$ in a subset $D_0$  of $\Gamma\times \Omega^\prime$ of full $\mathcal{R}\otimes \mathbb{P}^\prime$-measure. Now let us define
\[ 
S_r(t) := \sup_{l\in\mathbb{N}}\left\lvert X^{n_l,r}_{t^-}\right\rvert, \quad  \tilde{S}_r(t) := \sup_{l\in\mathbb{N}}\left\lvert Y^{n_l,r}_{t^-}\right\rvert\, . 
\]
Then 
\[ 
\sup_{t\in[0,T]}S_r(t)<\infty\quad \mathbb{P}-a.s., 
\quad \sup_{t\in[0,T]}\tilde{S}_r(t)<\infty\quad \tilde{\mathbb{P}}-a.s. 
\]
for all $(r,\omega^\prime)\in D_0$. So by  \ref{H4''} and inequality \eqref{bddsquare}, for  $(r,\omega^\prime)\in D_0$, 
\begin{align*}
\MoveEqLeft[5]\int_0^t \tilde{\mathbb{E}}\int_{\Gamma_\alpha} \left\lvert\theta\left( s,r,r^\prime, X^{n_l,r}_{s^-},Y^{n_l,r^\prime}_{\kappa(n_l,(s-\tau):s)},\omega^\prime\right)\right\rvert^{2}\mathcal{R}(\de r^\prime)\de s<\infty \quad \mathbb{P}-a.s.
\end{align*}
Using continuity of $\theta$ and $L^1([0,T]\times\tilde{\Omega}\times\Gamma,\de t \otimes \tilde{\mathbb{P}}\otimes\mathcal{R})$-uniform integrability of \[(s, \tilde{\omega}, r^\prime)\mapsto \theta\left( s,r,r^\prime, X^{n_l,r}_{s^-},Y^{n_l,r^\prime}_{\kappa(n_l,(s-\tau):s)},\omega^\prime\right)\] for all $(r,\omega^\prime)\in D_0$, we 
obtain that
\[
\lim_{l\to\infty}\int_0^t \tilde{\mathbb{E}}\int_{\Gamma_\alpha} \theta\left( s,r,r^\prime, X^{n_l,r}_{s^-},Y^{n_l,r^\prime}_{\kappa(n_l,(s-\tau):s)},\omega^\prime\right)\mathcal{R}(\de r^\prime)\de s 
= \int_0^t \tilde{\mathbb{E}}\int_{\Gamma_\alpha} \theta\left(s,r,r^\prime, X^{r}_{s^-},Y^{r^\prime}_{(s-\tau)^-:s^-},{\omega^\prime}\right)\mathcal{R}(\de r^\prime)\de s\quad 
\]
 $\mathbb{P}$-almost surely for $\mathcal{R}\otimes \mathbb{P}^\prime$-almost all $(r,\omega^\prime)$. Let 
 \[\tau_{r,R}:=\inf\left\lbrace t\ge 0:S_r(t)>R\right\rbrace 
\wedge T, \quad \tilde{\tau}_{r,R}:=\inf\left\lbrace t\ge 0:\tilde{S}_r(t)>R\right\rbrace 
\wedge T.\]
 For all $t\in [0,T]$ and for all $(r,\omega^\prime)\in D_0$, we have
\begin{align*}
\MoveEqLeft[0]\mathbb{E}\left\lvert  
\int_0^{t\wedge \tau_{r,R}}\tilde{\mathbb{E}}\int_{\Gamma_\alpha}\left[\beta\left( s,r,r^\prime, X^{n_l,r}_{s^-},Y^{n_l,r^\prime}_{\kappa(n_l,(s-\tau):s)},\omega^\prime\right)
-\beta\left( s,r,r^\prime, X^{r}_{s^-},Y^{r^\prime}_{(s-\tau)^-:s^-},\omega^\prime\right)\right]\mathcal{R}(\de r^\prime)\de 
B^\alpha_s\right\rvert^{2} 
\\
&\leq 2\,\mathbb{E}\int_0^t\tilde{\mathbb{E}}\int_{\Gamma_\alpha}\mathbf{1}_{\left\lbrace s\leq \tilde{\tau}_{r,R}\wedge \tau_{r,R}\right\rbrace}\left\lvert\beta\left( s,r,r^\prime, X^{n_l,r}_{s^-},Y^{n_l,r^\prime}_{\kappa(n_l,(s-\tau):s)},\omega^\prime\right)
-\beta\left( s,r,r^\prime, X^{r}_{s^-},Y^{r^\prime}_{(s-\tau)^-:s^-},\omega^\prime\right)\right\rvert^2\mathcal{R}(\de r^\prime)\de  s\\&\quad + 4\,\mathbb{E}\int_0^t\mathbf{1}_{\left\lbrace s\leq  \tau_{r,R}\right\rbrace}\bar{K}_s(\omega^\prime)(2+\left\lvert X^{n_l,r}_{s^-}\right\rvert^2+\left\lvert X^{r}_{s^-}\right\rvert^2 +4C_1(s,\omega^\prime))\tilde{\mathbb{E}}\int_{\Gamma_\alpha}\mathbf{1}_{\left\lbrace s> \tilde{\tau}_{r,R}\right\rbrace}\mathcal{R}(\de r^\prime) \de s\, . 
\end{align*}
So
\begin{align*}
\MoveEqLeft[1]\mathbb{P}\left\lbrace \left\lvert 
\int_0^t\tilde{\mathbb{E}}\int_{\Gamma_\alpha}\left[\beta\left( s,r,r^\prime, X^{n_l,r}_{s^-},Y^{n_l,r^\prime}_{\kappa(n_l,(s-\tau):s)},\omega^\prime\right)
-\beta\left( s,r,r^\prime, X^{r}_{s^-},Y^{r^\prime}_{(s-\tau)^-:s^-},\omega^\prime\right)\right]\mathcal{R}(\de r^\prime)\de 
B^\alpha_s\right\rvert > \varepsilon\right\rbrace 
\\ 
& \le \frac{1}{\varepsilon^2}\mathbb{E}\left\lvert 
\int_0^{t\wedge\tau_{r,R}}\tilde{\mathbb{E}}\int_{\Gamma_\alpha}\left[\beta\left( s,r,r^\prime, X^{n_l,r}_{s^-},Y^{n_l,r^\prime}_{\kappa(n_l,(s-\tau):s)},\omega^\prime\right)
-\beta\left( s,r,r^\prime, X^{r}_{s^-},Y^{r^\prime}_{(s-\tau)^-:s^-},\omega^\prime\right)\right]\mathcal{R}(\de r^\prime)\de 
B^\alpha_s\right \rvert^2 
\\&\quad+\mathbb{P}\left\lbrace t > \tau_{r,R} \right \rbrace.
\\& \leq \frac{2}{\varepsilon^2}\mathbb{E}\int_0^t\tilde{\mathbb{E}}\int_{\Gamma_\alpha}\mathbf{1}_{\left\lbrace s\leq \tilde{\tau}_{r,R}\wedge \tau_{r,R}\right\rbrace}\left\lvert\beta\left( s,r,r^\prime, X^{n_l,r}_{s^-},Y^{n_l,r^\prime}_{\kappa(n_l,(s-\tau):s)},\omega^\prime\right)
-\beta\left( s,r,r^\prime, X^{r}_{s^-},Y^{r^\prime}_{(s-\tau)^-:s^-},\omega^\prime\right)\right\rvert^2\mathcal{R}(\de r^\prime)\de  s\\&\quad + \left[\frac{8}{\varepsilon^2}\int_0^t \bar{K}_s(\omega^\prime) (1+3C_1(s,\omega^\prime))\int_{\Gamma_\alpha} \mathbb{P}\left\lbrace s> \tilde{\tau}_{r,R}\right\rbrace\mathcal{R}(\de r^\prime)\de s+\mathbb{P}\left\lbrace t > \tau_{r,R} \right \rbrace\right]
\end{align*}
For given $\delta > 0$ we can now find $R$ sufficiently large, such that the second term on 
the right hand side is less than $\delta$. Taking the limit $l\to\infty$ now implies that 
\[ 
\lim_{l\to \infty}\mathbb{P}\left\lbrace \left\lvert 
\int_0^t\tilde{\mathbb{E}}\int_{\Gamma_\alpha}\left[\beta\left( s,r,r^\prime, X^{n_l,r}_{s^-},Y^{n_l,r^\prime}_{\kappa(n_l,(s-\tau):s)},\omega^\prime\right)
-\beta\left( s,r,r^\prime, X^{r}_{s^-},Y^{r^\prime}_{(s-\tau)^-:s^-},\omega^\prime\right)\right]\mathcal{R}(\de r^\prime)\de 
B^\alpha_s\right\rvert>\varepsilon\right\rbrace\le \delta \, .
\]
Therefore 
\[
\int_0^t\tilde{\mathbb{E}}\int_{\Gamma_\alpha}\beta\left( s,r,r^\prime, X^{n_l,r}_{s^-},Y^{n_l,r^\prime}_{\kappa(n_l,(s-\tau):s)},\omega^\prime\right)
\mathcal{R}(\de r^\prime)\de 
B^\alpha_s\to 
\int_0^t\tilde{\mathbb{E}}\int_{\Gamma_\alpha}\beta\left( s,r,r^\prime, X^{r}_{s^-},Y^{r^\prime}_{(s-\tau)^-:s^-},\omega^\prime\right)\mathcal{R}(\de r^\prime)\de 
B^\alpha_s
\]
in probability. The same argument implies
\begin{align*}
\MoveEqLeft[1]\int_0^t\int_U \tilde{\mathbb{E}}\int_{\Gamma_\alpha}\eta\left( s,r,r^\prime, X^{n_l,r}_{s^-},Y^{n_l,r^\prime}_{\kappa(n_l,(s-\tau):s)}, \omega^\prime,\xi\right)
\mathcal{R}(\de r^\prime)\tilde{N}^\alpha(\de s,
\de \xi)\\&\to\int_0^t\int_U \tilde{\mathbb{E}}\int_{\Gamma_\alpha}\eta\left( s,r,r^\prime, X^{r}_{s^-},Y^{r^\prime}_{(s-\tau)^-:s^-}, \omega^\prime,\xi\right)
\mathcal{R}(\de r^\prime)\tilde{N}^\alpha(\de s,
\de \xi)\quad \text{in probability} 
\end{align*}
and for some further subsequence $n_{l_k}$ the above convergences are $\mathbb{P}-a.s.$ 
The convergence of the terms concerning the local dynamics in \eqref{Xn=phi(Xn+pn)} to the respective terms 
of  \eqref{equ2} follow from dominated convergence for the stopped solution (using $\tau_{r,R}$) and \ref{H6}. 
Therefore $X$ is a solution of equation \eqref{equ2} for $\mathcal{R}\times \mathbb{P}^\prime$-almost all $(r,\omega^\prime)$. 

\paragraph*{Uniqueness:} Let $X$ and $Y$ be two strong solutions of equation \eqref{equ2}. 
To shorten the notation again, let $u^r_s = (s,r,X_{s^-}^r )$, $u^{r,r'}_s  
= (s,r,r',X_{s^-}^r )$, $v_s^r = (s,r,Y_{s^-}^r)$ and $v_s^{r,r'} = (s,r,r',Y_{s^-}^r)$. We 
then have
\begin{align*}
\big\lvert X^{r}_t & - Y^{r}_t\big\rvert^2 
\\ 
& = M_t + \int_0^t \big[ 2\left\langle X^{r} _{s^-}-Y^{r} _{s^-},f\left( u_s^r, 
\omega^\prime\right) - f\left( v_s^r , \omega^\prime\right) \right \rangle 
+ \left\lvert g\left( u_s^r , \omega^\prime \right) - g\left(v_s^r ,\omega^\prime \right) 
  \right \rvert^2\big]\de s  
\\ 
& \quad 
+ \int_0^t\int_U \left\lvert h\left( u_s^r ,{\omega^\prime},\xi\right) - h \left( v_s^r ,
\omega^\prime , \xi \right) \right\rvert^2 N (\de s,\de \xi) 
\\ 
& \quad 
+ \sum_{\alpha=1}^P\int_0^t\bigg[2\left\langle X^{r} _{s^-}-Y^{r} _{s^-},
\int_{\Gamma_\alpha}\tilde{\mathbb{E}}\left[\theta\left(u_s^{r,r'},
\tilde{X}^{r^\prime}_{(s-\tau)^-:s^-},{\omega^\prime}\right)  
- \theta \left( v_s^{r,r'}, \tilde{Y}^{r^\prime}_{(s-\tau)^-:s^-},\omega^\prime 
\right) \right] \mathcal{R}(\de r^\prime)\right\rangle 
\\ 
& \qquad\qquad\qquad 
+ \left\lvert \int_{\Gamma_\alpha}\tilde{\mathbb{E}}\left[\beta\left(u_s^{r,r'},
\tilde{X}^{r^\prime}_{(s-\tau)^-:s^-},{\omega^\prime}\right)-\beta\left(v_s^{r,r'},
\tilde{Y}^{r^\prime}_{(s-\tau)^-:s^-},{\omega^\prime}\right)\right]\mathcal{R}(\de 
r^\prime)\right\rvert^2\bigg]\de s 
\\ 
& \quad 
+ \sum_{\alpha=1}^P\int_0^t\int_U\left\lvert \int_{\Gamma_\alpha} \tilde{\mathbb{E}} 
\left[\eta\left(u_s^{r,r'},\tilde{X}^{r^\prime}_{(s-\tau)^-:s^-},{\omega^\prime},\xi \right) 
- \eta\left(v_s^{r,r'},\tilde{Y}^{r^\prime}_{(s-\tau)^-:s^-},{\omega^\prime}, \xi \right) 
\right] \mathcal{R}(\de r^\prime)\right\rvert^2 N^\alpha(\de s,\de \xi)
\end{align*}
where $\left(\tilde{X},\tilde{Y}\right)$ are independent copies of $\left(X,Y\right)$ and $M_t$ 
is a local martingale w.r.t. some localizing sequence $\sigma_n$, $n\ge 1$, of stopping times. 
Using Fatou's lemma and Hypothesis \ref{hyp1} we then have
\begin{align*}
\mathbb{E}\left\lvert X^{r} _t-Y^{r} _t\right\rvert^2 
& \le  \liminf_{l\to\infty}\mathbb{E}\left\lvert X^{r} _{t\wedge\sigma_l} 
-Y^{r}_{t\wedge\sigma_l}\right\rvert^2 
\\ 
& \le \mathbb{E} \int_0^t \left( L_s({\omega^\prime}) + P\bar{L}_s ({\omega^\prime}) + P\right) 
\left\lvert X^{r} _{s^-}-Y^{r} _{s^-}\right\rvert^2\de s
\\ 
& \quad  
+ \int_0^t\bar{L}_s({\omega^\prime})\tilde{\mathbb{E}}\int_\Gamma\int_{-\tau}^0 
\left[\left \lvert \tilde{X}^{r^\prime}_{(s+u)^-}-\tilde{Y}^{r^\prime}_{(s+u)^-}\right\rvert^2 
+ \mathbf{1}_{\left\lbrace u<0\right\rbrace} \left \lvert \tilde{X}^{r^\prime}_{s+u} 
-\tilde{Y}^{r^\prime}_{s+u} \right\rvert^2\right] \lambda(\de u)\mathcal{R}(\de r^\prime)\de s
\end{align*}
so
\begin{align*}
\sup_{s\le t}\mathbb{E}\int_\Gamma\left\lvert X^{r}_s-Y^{r}_s\right\rvert^2\mathcal{R}(\de r) 
\le  
& 
\int_0^t\left(L_s({\omega^\prime})+3P\bar{L}_s({\omega^\prime})+P\right) 
\sup_{u\le s}\mathbb{E}\int_\Gamma\left\lvert X^{r}_u-Y^{r}_u\right\rvert^2\mathcal{R}(\de r) 
\de s\, . 
\end{align*}
By Gronwall's lemma and Lemma \ref{boundedness} we have
\[
\sup_{s\le T}\mathbb{E}\int_\Gamma\left\lvert X^{r}_s-Y^{r}_s\right\rvert^2 
\mathcal{R}(\de r)=0  \, , 
\] 
which proves uniqueness.
\end{proof}

\section{Proof of Theorem \ref{PropagationChaos}}
\label{ProofThmPropagationChaos}

\begin{proof}
Let us first introduce the notation  
\[
\avint{A} \psi(r^\prime)\mathcal{R}(\de r^\prime) 
:= \begin{dcases}\frac{1}{\mathcal{R}(A)}\int_A \psi(r^\prime)\mathcal{R}(\de r^\prime), 
& \mathcal{R}(A)\neq 0\, ,  
\\ 0, & \mathcal{R}(A)= 0\, . 
\end{dcases}
\]
To shorten the notation again, let $u^r_s = (s,r,X_{s^-}^{r,\mathcal{A}_N}, \omega')$, 
$u^{r,\tilde{r}}_s = (s,r,\tilde{r}, X_{s^-}^{r,\mathcal{A}_N}$, $X_{(s-\tau)^-:s^-}^{\tilde{r},
\mathcal{A}_N}, \omega')$, $v^r_s = (s,r,\bar{X}_{s^-}^{r}, \omega')$ and 
$v^{r,r'}_s = (s,r,r', \bar{X}_{s^-}^{r}, \tilde{X}_{(s-\tau)^-:s^-}^{r'}, \omega')$.  
Then 
\begin{align*}
\left\lvert X^{r,\mathcal{A}_N}_t-\bar{X}^r_t\right\rvert^2 
& =M_t+\int_0^t\bigg[2\left\langle X^{r,\mathcal{A}_N}_{s^-}-\bar{X}^r_{s^-}, 
f\left( u^r_s\right)-f\left(v^r_s \right)\right\rangle 
+ \left\lvert g\left(u^r_s\right) - g\left(v^r_s\right)\right\rvert^2\bigg]\de s 
\\ 
& \quad + \int_0^t\int_U\left\lvert h\left(u^r_s,\xi\right)- h\left(v^r_s,\xi \right) 
\right\rvert^2{N}^r(\de s,\de \xi)
\\ 
& \quad 
+ \sum_{\alpha=1}^P\int_0^t2\Bigg\langle  X^{r,\mathcal{A}_N}_{s^-}-\bar{X}^r_{s^-}, 
\frac{1}{\mathcal{S}_{\mathcal{A}_N,\alpha}}\sum_{\tilde{r}\in \mathcal{A}_N\cap  
\Gamma_\alpha }\theta\left(u_s^{r,\tilde{r}}\right) 
-\tilde{\mathbb{E}} \int_{\Gamma_\alpha}\theta\left(v_s^{r,r'} \right) 
\mathcal{R}(\de r^\prime)\Bigg\rangle\de s 
\\ 
& \quad 
+ \sum_{\alpha=1}^P\int_0^t\Bigg\lvert \frac{1}{\mathcal{S}_{\mathcal{A}_N,
\alpha}}\sum_{\tilde{r}\in \mathcal{A}_N\cap \Gamma_\alpha }\beta\left( 
u_s^{r,\tilde{r}} \right) 
- \tilde{\mathbb{E}} \int_{\Gamma_\alpha}\beta \left(v_s^{r,r'}\right) 
\mathcal{R}(\de r^\prime)\Bigg\rvert^2\de s \\
& \quad 
 + \sum_{\alpha=1}^P\int_0^t\int_U\Bigg\lvert \frac{1}{\mathcal{S}_{\mathcal{A}_N,
\alpha}}\sum_{\tilde{r}\in \mathcal{A}_N\cap \Gamma_\alpha } 
\eta\left(u_s^{r,\tilde{r}},\xi\right) 
- \tilde{\mathbb{E}}\int_{\Gamma_\alpha}\eta \left(v_s^{r,r'},\xi\right) 
\mathcal{R} (\de r^\prime)\Bigg\rvert^2N^{r,\alpha}(\de s,\de \xi)
\end{align*}
where $M_t$ is a local martingale up to time $T$ starting from zero with localizing sequence 
$\sigma_n$, $n\ge 1$, of stopping times. Taking expectation, using Fatou's lemma and Hypothesis 
\ref{hyp1} we obtain that 
\begin{equation}
\label{thm1.6:eq1}
\begin{aligned} 
\mathbb{E} \big\lvert X^{r,\mathcal{A}_N}_t & -\bar{X}^r_t\big\rvert^2 
 \le \liminf_{n\to\infty}  \mathbb{E} \left\lvert X^{r,\mathcal{A}_N}_{t\wedge\sigma_n} 
- \bar{X}^r_{t\wedge\sigma_n} \right\rvert^2 
\\
& \le \int_0^t L_s (\omega') \mathbb{E} \left\lvert X^{r,\mathcal{A}_N}_{s^-}-\bar{X}^r_{s^-} 
\right\rvert^2 \de s 
\\
& \qquad + P \int_0^t \mathbb{E}  \left\lvert X^{r,\mathcal{A}_N}_{s^-}-\bar{X}^r_{s^-} 
\right\rvert^2 \de s + \sum_{\alpha=1}^P \int_0^t \mathbb{E} \left\lvert
\frac{1}{\mathcal{S}_{\mathcal{A}_N,\alpha}}\sum_{\tilde{r}\in \mathcal{A}_N\cap  
\Gamma_\alpha }\theta\left(u_s^{r,\tilde{r}}\right) 
-\tilde{\mathbb{E}} \int_{\Gamma_\alpha}\theta\left(v_s^{r,r'} \right) 
\mathcal{R}(\de r^\prime)\right\rvert^2 \de s  
\\
& \qquad + \sum_{\alpha=1}^P\int_0^t\mathbb{E} \Bigg\lvert \frac{1}{\mathcal{S}_{\mathcal{A}_N,
\alpha}}\sum_{\tilde{r}\in \mathcal{A}_N\cap \Gamma_\alpha }\beta\left( 
u_s^{r,\tilde{r}} \right) 
- \tilde{\mathbb{E}} \int_{\Gamma_\alpha}\beta \left(v_s^{r,r'}\right) 
\mathcal{R}(\de r^\prime)\Bigg\rvert^2\de s 
\\
& \qquad + \sum_{\alpha=1}^P\int_0^t\int_U\mathbb{E} \Bigg\lvert \frac{1}{\mathcal{S}_{\mathcal{A}_N,
\alpha}}\sum_{\tilde{r}\in \mathcal{A}_N\cap \Gamma_\alpha } 
\eta\left(u_s^{r,\tilde{r}},\xi\right) 
- \tilde{\mathbb{E}}\int_{\Gamma_\alpha}\eta \left(v_s^{r,r'},\xi\right) 
\mathcal{R} (\de r^\prime)\Bigg\rvert^2\nu (\de \xi) \de s\, . 
\end{aligned}
\end{equation} 
Note that 
\begin{align*}
\sum_{\alpha=1}^P  \mathbb{E}\int_0^t & \left\lvert
\frac{1}{\mathcal{S}_{\mathcal{A}_N,\alpha}}\sum_{\tilde{r}\in \mathcal{A}_N\cap  
\Gamma_\alpha }\theta\left(u_s^{r,\tilde{r}}\right) 
-\tilde{\mathbb{E}} \int_{\Gamma_\alpha}\theta\left(v_s^{r,r'} \right) 
\mathcal{R}(\de r^\prime)\right\rvert^2 \de s 
\\
& \le 2\sum_{\alpha=1}^P  \mathbb{E}\int_0^t  \left\lvert
\frac{1}{\mathcal{S}_{\mathcal{A}_N,\alpha}}\sum_{\tilde{r}\in \mathcal{A}_N\cap  
\Gamma_\alpha } \left( \theta\left(u_s^{r,\tilde{r}}\right) 
- \theta\left(s,r,\tilde{r},\bar{X}^r_{s^-},\bar{X}^{\tilde{r}}_{(s-\tau)^-:s^-},{\omega^\prime} 
\right)\right) \right\rvert^2 \de s 
\\
& \qquad\qquad 
+ 2\sum_{\alpha=1}^P  \mathbb{E}\int_0^t \left\lvert
\frac{1}{\mathcal{S}_{\mathcal{A}_N,\alpha}}\sum_{\tilde{r}\in \mathcal{A}_N\cap  
\Gamma_\alpha }\theta\left(s,r,\tilde{r},\bar{X}^r_{s^-},\bar{X}^{\tilde{r}}_{(s-\tau)^-:s^-},
{\omega^\prime} \right) - \tilde{\mathbb{E}} \int_{\Gamma_\alpha}\theta\left(v_s^{r,r'} \right) 
\mathcal{R}(\de r^\prime)\right\rvert^2 \de s 
\\
& \le 2\, \mathbb{E} \int_0^t \bar{L}_s (\omega ') \frac{\#\mathcal{A}_N\cap\Gamma_\alpha}
{\mathcal{S}_{\mathcal{A}_N,\alpha}^2}\sum_{\tilde{r}\in \mathcal{A}_N\cap \Gamma_\alpha }
\Big[  |X_{s^-}^{r, \mathcal{A}_N}- \bar{X}^r_{s^-}|^2 
\\
& \qquad\qquad 
+ \int_{-\tau}^0\left(\left\lvert X^{\tilde{r},\mathcal{A}_N}_{(s+u)^-} 
- \bar{X}^{\tilde{r}}_{(s+u)^-} \right\rvert^2+\mathbf{1}_{\left\lbrace u<0 \right \rbrace} 
\left\lvert X^{\tilde{r},\mathcal{A}_N}_{s+u} - \bar{X}^{\tilde{r}}_{s+u} \right\rvert^2 
\right)\lambda(\de u)\Big] \de s 
 + 2 \sum_{\alpha=1}^P I_\alpha^\theta 
\end{align*}
with 
$$ 
I_\alpha^\theta :=  \mathbb{E}\int_0^t \left\lvert
\frac{1}{\mathcal{S}_{\mathcal{A}_N,\alpha}}\sum_{\tilde{r}\in \mathcal{A}_N\cap  
\Gamma_\alpha }\theta\left(s,r,\tilde{r},\bar{X}^r_{s^-},\bar{X}^{\tilde{r}}_{(s-\tau)^-:s^-},
{\omega^\prime} \right) - \tilde{\mathbb{E}} \int_{\Gamma_\alpha}\theta\left(v_s^{r,r'} \right) 
\mathcal{R}(\de r^\prime)\right\rvert^2 \de s \, . 
$$ 
The remaining terms on the right hand side of \eqref{thm1.6:eq1} can be estimated from above 
similarly so that \eqref{thm1.6:eq1} now yields the following estimate 
\begin{equation}
\label{thm1.6:eq2}
\begin{aligned} 
\mathbb{E} \big\lvert X^{r,\mathcal{A}_N}_t  -\bar{X}^r_t\big\rvert^2 
& \le \int_0^t (L_s (\omega') + P) \mathbb{E} \left\lvert X^{r,\mathcal{A}_N}_{s^-} 
- \bar{X}^r_{s^-} \right\rvert^2 \de s  
+ 2\sum_{\Theta\in\left\lbrace \theta,\beta\right\rbrace}\sum_{\alpha=1}^P I^{\Theta}_\alpha 
+ 2\sum_{\alpha=1}^P I^\eta_\alpha
\\ 
& \qquad  
+ 2 \sum_{\alpha=1}^P \int_0^t \bar{L}_s (\omega^\prime )  
\frac{\# \mathcal{A}_N\cap\Gamma_\alpha}{\mathcal{S}_{\mathcal{A}_N,\alpha}^2} 
\sum_{\tilde{r}\in \mathcal{A}_N\cap \Gamma_\alpha }\mathbb{E} \Bigg[ \left\lvert X^{r,\mathcal{A}_N}_{s^-}-\bar{X}^r_{s^-}\right\rvert^2
\\ 
& \qquad\qquad\qquad 
+ \int_{-\tau}^0\left(\left\lvert X^{\tilde{r},\mathcal{A}_N}_{(s+u)^-} 
- \bar{X}^{\tilde{r}}_{(s+u)^-}\right\rvert^2 + \mathbf{1}_{\left\lbrace u<0\right\rbrace}\left\lvert X^{\tilde{r},\mathcal{A}_N}_{s+u}-\bar{X}^{\tilde{r}}_{s+u}\right\rvert^2\right)\lambda(\de u)\Bigg]\de s\, . 
\end{aligned}
\end{equation} 
where
\begin{align*}
I_{\alpha}^\Theta 
& = \mathbb{E}\int_0^t\left\lvert \frac{1}{\mathcal{S}_{\mathcal{A}_N,\alpha}}  
\sum_{\tilde{r}\in \mathcal{A}_N\cap \Gamma_\alpha }\Theta\left(s,r,\tilde{r}, 
\bar{X}^r_{s^-},\bar{X}^{\tilde{r}}_{(s-\tau)^-:s^-},\omega^\prime \right) - \tilde{\mathbb{E}} 
\int_{\Gamma_\alpha}\Theta\left(s,r,r^\prime,\bar{X}^r_{s^-},
\tilde{X}^{r^\prime}_{(s-\tau)^-:s^-}, \omega^\prime\right)\mathcal{R} (\de r^\prime) 
\right\rvert^2\de s
\\ 
& \le 3\mathbb{E}\int_0^t \left\lvert \frac{1}{\mathcal{S}_{\mathcal{A}_N,\alpha}} 
\sum_{\tilde{r} \in \mathcal{A}_N\cap \Gamma_\alpha }\left[\Theta\left(s,r,\tilde{r},
\bar{X}^r_{s^-},\bar{X}^{\tilde{r}}_{(s-\tau)^-:s^-},\omega^\prime \right) - \tilde{\mathbb{E}} \Theta 
\left(s,r,\tilde{r},\bar{X}^r_{s^-},\tilde{X}^{\tilde{r}}_{(s-\tau)^-:s^-},\omega^\prime \right) 
\right] \right\rvert^2\de s 
\\ 
& \quad
 + 3 \mathbb{E}\int_0^t \Bigg\lvert \sum_{m=1}^{M_\alpha^{(\varepsilon)}}\frac{1}{\mathcal{S}_{\mathcal{A}_N,
\alpha}} \sum_{\tilde{r}\in \mathcal{A}_N\cap\Gamma^{m,\varepsilon}_\alpha} \tilde{\mathbb{E}} 
\Bigg[\Theta\left(s,r,\tilde{r},\bar{X}^r_{s^-},\tilde{X}^{\tilde{r}}_{(s-\tau)^-:s^-},
\omega^\prime\right)
\\ 
& \qquad\qquad\qquad\qquad\qquad\qquad  
-\avint{\Gamma_\alpha^{m,\varepsilon}}\Theta\left(s,r,r^\prime,\bar{X}^r_{s^-},
\tilde{X}^{r^\prime}_{(s-\tau)^-:s^-}, \omega^\prime\right)\mathcal{R}(\de r^\prime)\Bigg] 
\Bigg\rvert^2 \de s 
\\ 
& \quad
+ 3 \mathbb{E}\int_0^t \left\lvert \sum_{m=1}^{M_\alpha^{(\varepsilon)}} 
\left(\frac{\# \mathcal{A}_N \cap \Gamma^{m,\varepsilon}_\alpha } 
{\mathcal{S}_{\mathcal{A}_N,\alpha}} - \mathcal{R} \left(\Gamma^{m,\varepsilon}_\alpha 
\right)\right)\tilde{\mathbb{E}}\avint{\Gamma_\alpha^{m, \varepsilon}} 
\Theta\left(s,r,r^\prime,\bar{X}^r_{s^-},\tilde{X}^{r^\prime}_{(s-\tau)^-:s^-},
\omega^\prime\right)\mathcal{R}(\de r^\prime)\right\rvert^2 \de s \\ 
& = I + II + III\, , \text{ say.} 
\end{align*}
We can now further estimate the integrals $I-III$ from above as follows:  
\begin{equation} 
\label{thmPC:eq1}
\begin{aligned}
I & = \frac 3{\mathcal{S}_{\mathcal{A}_N,\alpha}^2} 
\sum_{\tilde{r}_1 , \tilde{r}_2\in\mathcal A_N\cap\Gamma_\alpha} \int_0^t 
\mathbb{E} \text{ tr } \Big[\Big( \Theta \left( s,r, \tilde{r}_1, \bar{X}^r_{s^-}, 
\bar{X}^{\tilde{r}_1}_{(s-\tau)^-:s^-},{\omega^\prime}\right)  - 
\tilde{\mathbb{E}} \Theta \left( s,r, \tilde{r}_1, \bar{X}^r_{s^-},  
\tilde{X}^{\tilde{r}_1}_{(s-\tau)^-:s^-}, {\omega^\prime}\right) \Big)^T \\ 
& \qquad\qquad\qquad\qquad\qquad 
\Big( \Theta \left( s,r, \tilde{r}_2, \bar{X}^r_{s^-}, 
\bar{X}^{\tilde{r}_2}_{(s-\tau)^-:s^-},{\omega^\prime}\right)  - 
\tilde{\mathbb{E}} \Theta \left( s,r, \tilde{r}_2, \bar{X}^r_{s^-}, 
\tilde{X}^{\tilde{r}_2}_{(s-\tau)^-:s^-} ,
{\omega^\prime}\right) \Big)\Big] \de s \\
& =\frac 3{\mathcal{S}_{\mathcal{A}_N,\alpha}^2} 
\sum_{\tilde{r}\in \mathcal{A}_N\cap\Gamma_\alpha} 
\int_0^t 
\mathbb{E} \Big[\left\lvert \Theta \left( s,r, \tilde{r}, \bar{X}^r_{s^-}, 
\bar{X}^{\tilde{r}}_{(s-\tau)^-:s^-},{\omega^\prime}\right)  - 
\tilde{\mathbb{E}} \Theta \left( s,r, \tilde{r}, \bar{X}^r_{s^-},  
\tilde{X}^{\tilde{r}}_{(s-\tau)^-:s^-} ,{\omega^\prime}\right) \right\rvert^2 \Big]  
\de s 
\end{aligned}
\end{equation} 
since for distinct $\tilde{r}_1$ and $\tilde{r}_2$, if $\tilde{r}_1\neq r$ or $\tilde{r}_2 
\neq r$ then $\bar{X}^{\tilde{r}_1}$ or $\bar{X}^{\tilde{r}_2}$ are independent of each other 
and also independent of $\bar{X}^r$, and therefore for arbitrary $i$-th component of $\Theta$, 
\begin{align*} 
\mathbb{E} \Big[\Big( & \Theta_i \left( s,r, \tilde{r}_1, \bar{X}^r_{s^-}, 
\bar{X}^{\tilde{r}_1}_{(s-\tau)^-:s^-},{\omega^\prime}\right)  - 
\tilde{\mathbb{E}} \Theta_i \left( s,r, \tilde{r}_1, \bar{X}^r_{s^-},  
\tilde{X}^{\tilde{r}_1}_{(s-\tau)^-:s^-} ,{\omega^\prime}\right) \Big) \\
& \qquad\qquad\qquad 
\times\Big( \Theta_i \left( s,r, \tilde{r}_2, \bar{X}^r_{s^-}, 
\bar{X}^{\tilde{r}_2}_{(s-\tau)^-:s^-},{\omega^\prime}\right)  - 
\tilde{\mathbb{E}} \Theta_i \left( s,r, \tilde{r}_2, \bar{X}^r_{s^-}, 
\tilde{X}^{\tilde{r}_2}_{(s-\tau)^-:s^-} ,
{\omega^\prime}\right) \Big) \mid \bar{X}^r \Big] 
\\ 
& = \mathbb{E} \Big[ \Big( \Theta_i \left( s,r, \tilde{r}_1, \bar{X}^r_{s^-}, 
\bar{X}^{\tilde{r}_1}_{(s-\tau)^-:s^-},{\omega^\prime}\right)  - 
\tilde{\mathbb{E}} \Theta_i \left( s,r, \tilde{r}_1, \bar{X}^r_{s^-},  
\tilde{X}^{\tilde{r}_1}_{(s-\tau)^-:s^-} ,{\omega^\prime}\right) \Big)\mid \bar{X}^r \Big] \\ 
& \qquad\qquad\qquad 
\times \mathbb{E} \Big[
\Big( \Theta_i \left( s,r, \tilde{r}_2, \bar{X}^r_{s^-}, 
\bar{X}^{\tilde{r}_2}_{(s-\tau)^-:s^-},{\omega^\prime}\right)  - 
\tilde{\mathbb{E}} \Theta_i \left( s,r, \tilde{r}_2, \bar{X}^r_{s^-}, 
\tilde{X}^{\tilde{r}_2}_{(s-\tau)^-:s^-} ,
{\omega^\prime}\right) \Big) \mid \bar{X}^r \Big]
= 0\, . 
\end{align*} 
Using Lemma \ref{boundedness} we can then further estimate the right hand side of 
\eqref{thmPC:eq1} from above by  
\begin{align*} 
\hphantom{kl} & \frac 3{\mathcal{S}_{\mathcal{A}_N,\alpha}^2} 
\sum_{\tilde{r}\in \mathcal{A}_N\cap\Gamma_\alpha} 
\int_0^t 
\mathbb{E} \Big[\left\lvert \Theta \left( s,r, \tilde{r}, \bar{X}^r_{s^-}, 
\bar{X}^{\tilde{r}}_{(s-\tau)^-:s^-},{\omega^\prime}\right)  - 
\tilde{\mathbb{E}} \Theta \left( s,r, \tilde{r}, \bar{X}^r_{s^-},  
\tilde{X}^{\tilde{r}}_{(s-\tau)^-:s^-}, {\omega^\prime}\right) \right\rvert^2 \Big]  
\de s 
\\ 
& \qquad\le 
\frac{12}{\mathcal{S}_{\mathcal{A}_N,\alpha}^2}
\sum_{\tilde{r}\in {\mathcal A}_N\cap \Gamma_\alpha} 
\int_0^t \bar{K}_s (\omega' ) 
\left[1+ \mathbb{E} \lvert \bar{X}^r_{s^-}\rvert^2  
+\int_{-\tau}^0 \mathbb{E} \lvert\bar{X}^r_{(s+u)^-}\rvert^2 
+ \mathbf{1}_{\{ u < 0\}} \mathbb{E} \lvert\bar{X}^r_{s+u}\rvert^2 
\lambda (du)\right]\de s 
\\ 
& \qquad\le 
\frac{\#{\mathcal A}_N\cap \Gamma_\alpha}{\mathcal{S}_{\mathcal{A}_N,\alpha}^2}  
\int_0^t 12 \bar{K}_s (\omega' ) ( 1 + 3C_1 (s, \omega '))\de s  
\end{align*}
The next term can be estimated from above as follows: 
\begin{align*} 
II 
& \le 3 \int_0^t \sum_{m=1}^{M_\alpha^{(\varepsilon)}} \sum_{\tilde{r}\in \mathcal{A}_N  
\cap \Gamma_\alpha^{m,\varepsilon}} \bar{L}_s({\omega^\prime}) \frac{\# \mathcal{A}_N 
\cap \Gamma_\alpha}{\mathcal{S}_{\mathcal{A}_N,\alpha}^2}
\\ 
& \qquad\qquad 
\times\bigg[ \tilde{\mathbb{E}}\avint{\Gamma_\alpha^{m,\varepsilon}} \int_{-\tau}^0  
\bigg[\left\lvert \tilde{X}^{\tilde{r}}_{(s+u)^-}- \tilde{X}^{r^\prime}_{(s+u)^-}\right\rvert^2 
+ \mathbf{1}_{\left\lbrace u<0\right\rbrace}\left\lvert \tilde{X}^{\tilde{r}}_{s+u} 
- \tilde{X}^{r^\prime}_{s+u}\right\rvert^2\bigg]\lambda(\de u)\mathcal{R}(\de r^\prime)
\\ 
& \qquad\qquad\qquad\qquad\qquad 
+ \varepsilon\left( 1 + \mathbb{E}\left\lvert \bar{X}^r_{s^-}\right\rvert^2 
+ \tilde{\mathbb{E}}\int_{-\tau}^0\bigg[\left\lvert\tilde{X}^{\tilde{r}}_{(s+u)^-}\right\rvert^2  
+ \mathbf{1}_{\left\lbrace u<0\right\rbrace}\left\lvert \tilde{X}^{\tilde{r}}_{s+u} 
 \right\rvert^2\bigg] \lambda(\de u)\right)\bigg] \de s \\
& \le 3\varepsilon 
\frac{(\# \mathcal{A}_N\cap\Gamma_\alpha)^2}{\mathcal{S}_{\mathcal{A}_N,\alpha}^2} \int_0^t 
\bar{L}_s ({\omega^\prime}) \left( 1 + 3 C_1(s,{\omega^\prime}) + 
C_2(s,{\omega^\prime})\right)\de s 
\end{align*}
using Lemma \ref{boundedness} and Lemma \ref{continuity w.r.t r}. Finally, 
\begin{align*} 
III 
& \le 3 M_\alpha^{(\varepsilon )} 
\sum_{m=1}^{M_\alpha^{(\varepsilon)}} 
\left(\frac{\# \mathcal{A}_N \cap \Gamma^{m,\varepsilon}_\alpha } 
{\mathcal{S}_{\mathcal{A}_N,\alpha}} - \mathcal{R} \left(\Gamma^{m,\varepsilon}_\alpha 
\right) \right)^2 
\mathbb{E}\int_0^t\left\lvert \tilde{\mathbb{E}}\avint{\Gamma_\alpha^{m, \varepsilon}} 
\Theta\left( s,r,r^\prime,\bar{X}^r_{s^-},\tilde{X}^{r^\prime}_{(s-\tau)^-:s^-},
\omega^\prime\right) \mathcal{R}(\de r^\prime)\right\rvert^2\de s 
\\ 
& \le  3 M_\alpha^{(\varepsilon )} 
\sum_{m=1}^{M_\alpha^{(\varepsilon )}} 
\left(\frac{\# \mathcal{A}_N \cap \Gamma^{m,\varepsilon}_\alpha } 
{\mathcal{S}_{\mathcal{A}_N,\alpha}} - \mathcal{R} \left(\Gamma^{m,\varepsilon}_\alpha 
\right) \right)^2 
\mathbb{E}\int_0^t \bar{K}_s (\omega') \Big( 1
+ \left\lvert \bar{X}^r_{s^-}\right\rvert^2   
\\  
& \qquad\qquad\qquad\qquad 
+ \tilde{\mathbb{E}} \avint{\Gamma^{m,\varepsilon}_\alpha} \int_{-\tau}^0 \left[ \left\lvert 
\tilde{X}^{r^\prime}_{(s+u)^-} \right\rvert^2  + \mathbf{1}_{\left\lbrace u<0\right\rbrace} 
\left\lvert \tilde{X}^{r^\prime}_{s+u}\right\rvert^2\right] \lambda(\de u) 
\mathcal{R}(\de r^\prime )\Big) \de s 
\\
& \le 3 M_\alpha^{(\varepsilon )} \sum_{m=1}^{M_\alpha^{(\varepsilon )}} 
\left(\frac{\# \mathcal{A}_N \cap \Gamma^{m,\varepsilon}_\alpha } 
{\mathcal{S}_{\mathcal{A}_N,\alpha}} - \mathcal{R} \left(\Gamma^{m,\varepsilon}_\alpha 
\right) \right)^2 \int_0^t \bar{K}_s (\omega ') \left( 1 +  3C_1 (s, \omega ')\right)\, ds    
\end{align*} 
using Lemma \ref{boundedness}. Summing up the above estimates we now obtain that 
\begin{align*}
I_\alpha^\Theta 
& \le 6\left(\frac{\#\mathcal{A}_N\cap\Gamma_\alpha}{\mathcal{S}_{\mathcal{A}_N,
\alpha}^2} + \varepsilon \frac{(\#\mathcal{A}_N\cap\Gamma_\alpha )^2}
{\mathcal{S}_{\mathcal{A}_N,\alpha}^2} 
+ M_\alpha^{(\varepsilon)}\sum_{m=1}^{M_\alpha^{(\varepsilon)}} 
\left( 
\frac{\#\left(\mathcal{A}_N\cap\Gamma^{m,\varepsilon}_\alpha\right)}{\mathcal{S}_{\mathcal{A}_N,
\alpha}}-\mathcal{R}\left(\Gamma^{m,\varepsilon}_\alpha\right)\right)^2\right) 
C_2 (t,{\omega^\prime}) \, . 
\end{align*}
Similar arguments imply that
\begin{align*}
I_\alpha^\eta  
& =\int_0^t\int_U\Bigg\lvert \frac{1}{\mathcal{S}_{\mathcal{A}_N,\alpha}} 
\sum_{\tilde{r}\in \mathcal{A}_N\cap \Gamma_\alpha }\eta\left(s,r,\tilde{r},\bar{X}^r_{s^-},
\bar{X}^{\tilde{r}}_{(s-\tau)^-:s^-},{\omega^\prime},\xi\right) 
\\ 
& \qquad\qquad\quad\qquad 
-\tilde{\mathbb{E}}\int_{\Gamma_\alpha}\eta\left(s,r,r^\prime,\bar{X}^r_{s^-},\tilde{X}^{r^\prime}_{(s-\tau)^-:s^-},{\omega^\prime},\xi\right)\mathcal{R} 
(\de r^\prime)\Bigg\rvert^2\nu(\de \xi)\de s
\\ 
& \le 6\left(\frac{\#\mathcal{A}_N\cap\Gamma_\alpha} 
{\mathcal{S}_{\mathcal{A}_N,\alpha}^2}  
+ \varepsilon \frac{\left(\#\mathcal{A}_N\cap\Gamma_\alpha \right)^2 } 
{\mathcal{S}_{\mathcal{A}_N,\alpha}^2} 
+ M_\alpha^{(\varepsilon)} \sum_{m=1}^{M_\alpha^{(\varepsilon)}} 
\left(\frac{\#\mathcal{A}_N\cap\Gamma^{m,\varepsilon}_\alpha}
{\mathcal{S}_{\mathcal{A}_N,\alpha}} -\mathcal{R}\left(\Gamma^{m,\varepsilon}_\alpha \right) 
\right)^2\right) C_2(t,{\omega^\prime})\, . 
\end{align*}
So
\begin{align*}
\mathbb{E} & \left\lvert X^{r,\mathcal{A}_N}_t 
-\bar{X}^r_t\right\rvert^2
\\ 
&\le\int_0^t\left(L_s({\omega^\prime})
+6\bar{L}_s({\omega^\prime})\sum_{\alpha=1}^P 
\frac{\left(\# \mathcal{A}_N\cap\Gamma_\alpha\right)^2} 
{\mathcal{S}_{\mathcal{A}_N,\alpha}^2}+P\right)\max_{r\in \mathcal{A}_N} 
\sup_{u\in [0,s]}\mathbb{E}\left\lvert X^{r,\mathcal{A}_N}_u-\bar{X}^r_u\right\rvert^2\de s 
\\ 
& \quad 
+ 36\sum_{\alpha=1}^P\left(\frac{\#\mathcal{A}_N\cap\Gamma_\alpha} 
{\mathcal{S}_{\mathcal{A}_N,\alpha}^2} + \varepsilon \frac{\left(\#\mathcal{A}_N \cap 
\Gamma_\alpha\right)^2}{\mathcal{S}_{\mathcal{A}_N,\alpha}^2} 
+ M_\alpha^{(\varepsilon)}\sum_{m=1}^{M_\alpha^{(\varepsilon)}} 
\left( \frac{\#\mathcal{A}_N\cap\Gamma^{m,\varepsilon}_\alpha}  
{\mathcal{S}_{\mathcal{A}_N,\alpha}}-\mathcal{R}\left(\Gamma^{m,\varepsilon}_\alpha
\right)\right)^2\right) 
C_2 (t,{\omega^\prime}) \, . 
\end{align*}
Hence Gronwall's lemma implies
\begin{align*}
\sup_{\substack{s\in[0,T]\\r\in \mathcal{A}_N}} 
\mathbb{E} & \left\lvert X^{r,\mathcal{A}_N}_s-\bar{X}^r_s\right\rvert^2 
\\ 
& \le 36\sum_{\alpha=1}^P 
\left(\frac{\#\mathcal{A}_N\cap\Gamma_\alpha}
{\mathcal{S}_{\mathcal{A}_N,\alpha}^2} 
+ \varepsilon \frac{\left(\#\mathcal{A}_N\cap\Gamma_\alpha\right)^2 } 
{\mathcal{S}_{\mathcal{A}_N,\alpha}^2} + M_\alpha^{(\varepsilon)} 
\sum_{m=1}^{M_\alpha^{(\varepsilon)}} 
\left( \frac{\#\mathcal{A}_N\cap\Gamma^{m,\varepsilon}_\alpha} 
{\mathcal{S}_{\mathcal{A}_N,\alpha}}-\mathcal{R}\left(\Gamma^{m,\varepsilon}_\alpha
\right)\right)^2\right) C_2 (T,{\omega^\prime})
\\ 
& \quad 
\times\exp\left[\int_0^T\left(L_s({\omega^\prime})
 + 6\bar{L}_s({\omega^\prime}) 
\sum_{\alpha=1}^P \frac{\left(\#\mathcal{A}_N\cap\Gamma_\alpha\right)^2} 
{\mathcal{S}_{\mathcal{A}_N,\alpha}^2} 
+ P\right)\de s \right] 
\\ 
& \le  144\sum_{\alpha=1}^P\left(\frac{\#\mathcal{A}_N\cap\Gamma_\alpha}
{\mathcal{S}_{\mathcal{A}_N,\alpha}^2} 
+ \varepsilon \frac{\left(\#\mathcal{A}_N\cap \Gamma_\alpha\right)^2}
{\mathcal{S}_{\mathcal{A}_N,\alpha}^2} + M_\alpha^{(\varepsilon)} 
\sum_{m=1}^{M_\alpha^{(\varepsilon)}} 
\left( \frac{\#\mathcal{A}_N\cap\Gamma^{m,\varepsilon}_\alpha} 
{\mathcal{S}_{\mathcal{A}_N,\alpha}}-\mathcal{R}\left(\Gamma^{m,\varepsilon}_\alpha
\right)\right)^2 \right) 
\\ 
& \quad 
\times\exp\left[\int_0^T\left(2L_s({\omega^\prime})
+\bar{L}_s({\omega^\prime}) \left( 6\sum_{\alpha=1}^P\frac{\left(\# 
\mathcal{A}_N\cap\Gamma_\alpha\right)^2} {\mathcal{S}_{\mathcal{A}_N,\alpha}^2} + P 
\right)+K_s({\omega^\prime}) + 3P\bar{K}_s({\omega^\prime}) +3 P\right)\de s\right] \, . 
\end{align*}
Now by integrating with respect to ${\omega^\prime}$, we get for some finite constant $C(T)$ 
that 
\begin{align*}
\mathcal{E} & \left[\sup_{\substack{s\in [0,T]\\r\in \mathcal{A}_N}} 
\mathbb{E}\left\lvert X^{r,\mathcal{A}_N}_s-\bar{X}^r_s\right\rvert^2\right] 
\\ 
& \qquad \le C(T)\sum_{\alpha=1}^P\left(\frac{\#\mathcal{A}_N\cap\Gamma_\alpha}
{\mathcal{S}_{\mathcal{A}_N,\alpha}^2} 
+ \varepsilon \frac{\left(\#\mathcal{A}_N\cap\Gamma_\alpha\right)^2} 
{\mathcal{S}_{\mathcal{A}_N,\alpha}^2} 
+ M_\alpha^{(\varepsilon)}\sum_{m=1}^{M_\alpha^{(\varepsilon)}}
\left(\frac{\#\mathcal{A}_N\cap\Gamma^{m,\varepsilon}_\alpha}{\mathcal{S}_{\mathcal{A}_N,\alpha}}-\mathcal{R}\left(\Gamma^{m,\varepsilon}_\alpha\right)\right)^2\right) .
\end{align*}
Hence
\[
\lim_{N\to\infty}\mathcal{E}\left[\sup_{\substack{s\in [0,T]\\r\in \mathcal{A}_N}}\mathbb{E}\left\lvert X^{r,\mathcal{A}_N}_s-\bar{X}^r_s\right\rvert^2\right]\le PC(T)\varepsilon \, , 
\]
where $\varepsilon$ is arbitrary and therefore
\[ 
\lim_{N\to\infty}\mathcal{E}\left[\sup_{\substack{s\in [0,T] \\ r\in \mathcal{A}_N}} 
\mathbb{E} \left\lvert X^{r,\mathcal{A}_N}_s -\bar{X}^r_s\right\rvert^2\right]=0\, . 
\]
\end{proof}

\begin{appendix} 
\section{Well-posedness for SDEs with path-dependent delay driven by jump diffusions} 
\label{Appendix}  

The purpose of this Appendix is to provide a general existence and uniqueness result on strong 
solutions of stochastic delay differential equations with monotone coefficients driven by jump 
diffusions, that in particular covers the assumptions on the network equations \eqref{equ1}. 
For further reference we formulate our results under more general assumptions on the coefficients. 

\medskip 
Let $(\Omega,\mathcal{F},\mathbb{P})$ be a complete probability space with filtration  
$(\mathcal{F}_t)_{t\ge 0}$ satisfying the usual conditions. Let $(\Omega^\prime, \mathcal{F}^\prime,\mathbb{P}^\prime)$ be a probability space of parameters $\omega^\prime$.  Consider the following stochastic 
delay differential equation:
\begin{equation}
\label{equ1-}
\begin{split}
\de X_t^{\omega^\prime}
& = f(t,\omega,X_{(t-\tau)^-:t^-}^{\omega^\prime},\omega^\prime)\de t+g(t,\omega, X_{(t-\tau)^-:t^-}^{\omega^\prime},\omega^\prime)\de W_t 
+ \int_U h(t,\omega, X_{(t-\tau)^-:t^-}^{\omega^\prime},\omega^\prime, \xi)\tilde{N}(\de t,\de \xi)
\\ 
X_t^{\omega^\prime} & = z_t^{\omega^\prime}, \quad t\in [-\tau,0] \, . 
\end{split}
\end{equation}  
Here, $W$ is a standard Brownian motion in $\mathbb{R}^m$ adapted to the filtration 
$(\mathcal{F}_t)_{t\ge 0}$ such that $(W_t-W_s)_{t\ge s}$ is independent of $\mathcal{F}_s$, 
$s\ge 0$. $N$ is a time homogeneous Poisson measure on $[0,\infty)\times U$ with 
intensity measure 
$\de t \otimes \nu$, where $(U,\mathcal{U}, \nu)$ is an arbitrary $\sigma$-finite measure space. 
$N$ is adapted to the filtration $(\mathcal{F}_t)_{t\ge 0}$, and $N(A)$ is independent of 
$\mathcal{F}_s$, $s\ge 0$, for every measurable subset $A\subseteq (s,\infty)\times U$. Finally, 
denote with $\tilde{N}:=N-\de t \otimes \nu$ the compensated Poisson measure associated with $N$. 
The initial condition $z_{-\tau : 0}^{\omega^\prime}$ belongs to $L^2(\Omega,\mathbb{P};
\text{C\`adl\`ag}\,([-\tau,0];\mathbb{R}^d))$ and is measurable w.r.t $(t,\omega,\omega^\prime)\in [-\tau,0]\times \Omega\times \Omega^\prime$. Recall that we consider the space 
$\text{C\`adl\`ag}\,([-\tau,0];\mathbb{R}^d)$ as well as $\text{C\`agl\`ad} \,([-\tau,0];
\mathbb{R}^d)$, to be endowed with the supremum norm. Finally we assume that $W$, $N$ and $z_{-\tau:0}^{\omega^\prime}$ 
are independent. The coefficients
\[ 
f ,g :\left( [0, \infty )\times \Omega\times \text{C\`agl\`ad}\,([-\tau,0];\mathbb{R}^d)\times \Omega^\prime ,\mathcal{BF}\otimes\mathcal{B}\left(\text{C\`agl\`ad}\,([-\tau,0];\mathbb{R}^d)\right)\otimes \mathcal{F}^\prime\right)\]\[\to \left(\mathbb{R}^d,\mathcal{B}\left(\mathbb{R}^d\right)\right), \left(\mathbb{R}^{d\times m},\mathcal{B}\left(\mathbb{R}^{d\times m}\right)\right)\]
are progressively measurable and
\[h  : \left([0, \infty )\times \Omega\times \text{C\`agl\`ad}\,([-\tau,0];\mathbb{R}^d)\times \Omega^\prime\times U,\mathcal{P}\otimes\mathcal{B}\left(\text{C\`agl\`ad}\,([-\tau,0];\mathbb{R}^d)\right)\otimes \mathcal{F}^\prime\otimes \mathcal{U}\right)\to\left(\mathbb{R}^d,\mathcal{B}\left(\mathbb{R}^d\right)\right) \]
is predictable. Here $\mathcal{BF}$ and $\mathcal{P}$ are the $\sigma$-field of progressively measurable sets and predictable sets on $[0,\infty)\times \Omega$ respectively. 

The following monotonicity and growth conditions are assumed: 

\begin{hyp}
\label{hyp1-} 
There exist a probability measure $\lambda$ on $[-\tau , 0]$ and nonnegative measurable functions $K_t(\omega^\prime) $, $L_t(R,\omega^\prime)$ and $\tilde{K}_t (R,\omega^\prime)$ in 
$L^1_{loc} ([0,\infty[,dt)$, for all $R > 0$ and all $\omega^\prime\in \Omega^\prime$, such that 
the following conditions hold: 
\begin{enumerate}[label=(C\theenumi)]
\item \label{C1} for $\left\lvert x\right\rvert_{L^\infty}$, $\left\lvert y\right\rvert_{L^\infty}\le R$,
\begin{align*}
&  2\left\langle x_0-y_0,f(t,\omega,x_{-\tau:0},\omega^\prime)-f(t,\omega,y_{-\tau:0},\omega^\prime)\right\rangle 
 +\left\lvert g(t,\omega,x_{-\tau:0},\omega^\prime)-g(t,\omega,y_{-\tau:0},\omega^\prime)\right\rvert^2 \\ 
& \qquad\qquad\qquad\qquad 
+\int_{U}\left\lvert h(t,\omega,x_{-\tau:0},\omega^\prime, \xi)-h(t,\omega,y_{-\tau:0},\omega^\prime,
  \xi)\right\rvert^2 \nu ( \de \xi) 
 \\ 
  & \qquad \le L_t(R,\omega^\prime) \int_{-\tau}^0 \left[\left\lvert x_s-y_s\right\rvert^2 
+ \mathbf{1}_{\left\lbrace s<0\right\rbrace}\left\lvert x_{s^+}-y_{s^+}\right\rvert^2\right] 
\lambda(\de s),
\end{align*}
\item \label{C2}
$ 
2\left\langle  x_0,f(t,\omega,x_{-\tau:0},\omega^\prime)\right\rangle  
+ \left\lvert g(t,\omega,x_{-\tau:0},\omega^\prime)\right\rvert^2 
+ \int_{U}\left\lvert h(t,\omega,x_{-\tau:0},\omega^\prime, \xi)\right\rvert^2\nu(\de \xi) 
$
\begin{flalign*} 
& \le K_t(\omega^\prime)(1+\int_{-\tau}^0\left[\left\lvert x_s\right\rvert^2 
+ \mathbf{1}_{\left\lbrace s<0\right\rbrace}\left\lvert x_{s^+}\right\rvert^2\right] 
\lambda(\de s)) 
\end{flalign*}
\item \label{C3} 
$x_{-\tau:0}\mapsto f(t,\omega,x_{-\tau:0},\omega^\prime)$ as a function from $\text{C\`agl\`ad}\,([-\tau,0];
\mathbb{R}^d)$ to $\mathbb{R}^d$ is continuous.
\item \label{C4}
$\sup_{\left\lvert x\right\rvert_{L^\infty}\le R}\left[\left\lvert  
f(t,\omega,x_{-\tau:0},\omega^\prime)\right\rvert  
+ \left\lvert g(t,\omega,x_{-\tau:0},\omega^\prime)\right\rvert^2+\int_U\left\lvert h(t,\omega, 
 x_{-\tau:0},\omega^\prime, \xi)\right\rvert^2\nu(\de \xi)\right] 
\le \tilde{K}_t(R,\omega^\prime)$
\end{enumerate} 
\end{hyp} 

We are going to prove existence and uniqueness of a strong solution using the Euler method. To 
this end let us introduce for $n\in\mathbb{N}$ the Euler approximation  
\begin{equation}
\label{df Xn-}
\begin{split}
X^{n,\omega^\prime}_t
& = X^{n,\omega^\prime}_{\frac{k\tau}{n}}+\int_{\frac{k\tau}{n}}^t f\left(s,\omega,  
  X^{n,\omega^\prime}_{\kappa(n,(s-\tau):s)},\omega^\prime\right)\de s 
+ \int_{\frac{k\tau}{n}}^t g\left(s,\omega, X^{n,\omega^\prime}_{\kappa(n,(s-\tau):s)},\omega^\prime\right)\de W_s 
\\ 
& \quad +\int_{\frac{k\tau}{n}}^t \int_U h\left(s,\omega, X^{n,\omega^\prime}_{\kappa(n,(s-\tau):s)},\omega^\prime,
\xi\right)\tilde{N}(\de s,\de \xi), \quad t\in \left]\frac{k\tau}{n},\frac{(k+1)\tau}{n}\right]
\end{split}
\end{equation}
to the solution of \eqref{equ1-}. Here, $\kappa(n,t):=\frac{k\tau}{n}$ for 
$t\in \left]\frac{k\tau}{n},\frac{(k+1)\tau}{n}\right]$. The process $X^{n,\omega^\prime}$ can be 
constructed inductively as follows: $X^{n,\omega^\prime}_t := z_t^{\omega^\prime}$ for $t\in [-\tau,0]$, and given 
$X^{n,\omega^\prime}_t$ is defined for $t\le \frac{k\tau}{n}$ we can extend $X^{n,\omega^\prime}_t$ for 
$t\in \left]\frac{k\tau}{n},\frac{(k+1)\tau}{n}\right]$ using \eqref{df Xn-}. Note that 
$X^{n,\omega^\prime}$ is c\`adl\`ag, whereas the process $X^{n,\omega^\prime}_{\kappa(n,t)}$, $t\ge -\tau$, 
is c\`agl\`ad. It is easy to see, using induction w.r.t. to $k$, that 
$X^{n,\omega^\prime}_t$, $t\in \left] \frac{k\tau}n , \frac{(k+1)\tau }n\right]$  and is measurable w.r.t. $(t,\omega,\omega^\prime)$ and is a.s. locally bounded 
and that the stochastic integrals are well-defined .  

\begin{thm}
\label{thmA2}
Under Hypothesis \ref{hyp1-}, equation \eqref{equ1-} has a unique strong solution $\left(X_t^{\omega^\prime}\right)_{t\geq 0}$, and for $\mathbb{P}^\prime$-almost all $\omega^\prime\in \Omega^\prime$,
$\left(X^{n,\omega^\prime}_t\right)_{t\geq 0}$ converges to $\left(X_t^{\omega^\prime}\right)_{t\geq 0}$ locally uniformly in probability, i.e. for all $T>0$ and $\mathbb{P}^\prime$-almost all $\omega^\prime\in \Omega^\prime$, 
\[
\lim_{n\to\infty}\mathbb{P}\left\lbrace \sup_{t\in[0,T]}\left\lvert X^{n,\omega^\prime}_t-X_t^{\omega^\prime}\right\rvert 
  > \varepsilon\right\rbrace=0\qquad\forall\,\varepsilon > 0\, . 
\]
and $X$  is measurable w.r.t. $(t,\omega, \omega^\prime)\in [-\tau,\infty[\times \Omega\times \Omega^\prime$ and satisfies 
\begin{equation}\label{moment estimate}
1+2\mathbb{E}\left\lvert X_t^{\omega^\prime}\right\rvert^2 
\le \left( 1+2\sup_{u\in [-\tau,0]}\mathbb{E}\left\lvert z_u^{\omega^\prime}\right\rvert^2\right) 
\cdot\exp\left(\int_0^t 2 K_s(\omega^\prime)\de s\right), \quad t\geq 0.
\end{equation}
\end{thm}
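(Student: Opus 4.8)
The plan is to construct the solution as the locally-uniform-in-probability limit of the Euler approximations \eqref{df Xn-}, working for each fixed $\omega'$ (so that $K_\cdot(\omega')$, $L_\cdot(R,\omega')$, $\tilde K_\cdot(R,\omega')$ are genuine $L^1_{loc}$ functions of $t$) while tracking joint measurability in $(t,\omega,\omega')$ throughout. First I would record that on each mesh interval $\left]k\tau/n,(k+1)\tau/n\right]$ the sampled segment $X^{n,\omega'}_{\kappa(n,(s-\tau):s)}$ refers only to grid values at times $\le k\tau/n$, already constructed at the previous steps; hence the integrands in \eqref{df Xn-} are adapted, already-defined processes and $X^{n,\omega'}$ is obtained by direct stochastic integration, with no per-step fixed point needed. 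An induction on $k$ shows that $X^{n,\omega'}$ is c\`adl\`ag, a.s.\ locally bounded and jointly measurable. Applying It\^o's formula to $|X^{n,\omega'}_t|^2$, taking expectations, invoking the growth bound \ref{C2} and Gronwall's lemma then yields the moment estimate \eqref{moment estimate} uniformly in $n$; in particular $\sup_{t\le T}\mathbb E|X^{n,\omega'}_t|^2\le C_1(T,\omega')$.

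The convergence is carried out by the same five-step scheme already used for Theorem \ref{existunique}. Set the remainder $p^{n}_t:=X^{n,\omega'}_{\kappa(n,t)}-X^{n,\omega'}_{t^-}$ and the stopping times $\tau^n_R:=\inf\{t\ge 0:|X^{n,\omega'}_t|>R/3\}$. In step \textbf{(i)} I would show $\mathbf 1_{(-\tau,\tau^n_R]}(t)|p^n_t|\to 0$ in probability, estimating the increment of $X^{n,\omega'}$ over a single mesh interval by Chebyshev's inequality and the It\^o isometry for the Brownian and compensated-Poisson integrals, so that the local growth bound \ref{C4} makes every contribution vanish as the mesh width $\tau/n\to 0$. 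Step \textbf{(ii)} gives $\mathbb E|X^{n,\omega'}_{\tau^*}|^2\le C(T,\omega')$ for every stopping time $\tau^*\le T\wedge\tau^n_R$, again from It\^o's formula with \ref{C2}. Step \textbf{(iii)} is non-explosion: Chebyshev applied to step \textbf{(ii)} with $\tau^*=T\wedge\tau^n_R\wedge\inf\{t:|X^{n,\omega'}_t|\ge a\}$ yields $\lim_{R\to\infty}\limsup_{n}\mathbb P\{\tau^n_R<T\}=0$.

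The heart of the argument is the Cauchy property \textbf{(iv)}. For $\bar\tau\le t\wedge\tau^n_R\wedge\tau^m_R$ I apply It\^o to $|X^{n,\omega'}_{\bar\tau}-X^{m,\omega'}_{\bar\tau}|^2$. Since both Euler paths share the initial data and are bounded on the localization interval, the monotonicity bound \ref{C1} applies to the difference of the coefficients at the two sampled segments once $R$ dominates the (a.s.\ finite) sup-norm of the initial condition. Two corrections then have to be absorbed: the remainders $p^n_{(s-\tau):s},p^m_{(s-\tau):s}$ appearing when the sampled segments are written as left-limit segments plus remainder inside the coefficients, and the endpoint mismatch $p^n_s-p^m_s$ between the It\^o pairing $\langle X^{n,\omega'}_{s^-}-X^{m,\omega'}_{s^-},\cdot\rangle$ and the pairing $\langle x_0-y_0,\cdot\rangle$ demanded by \ref{C1}; on the localization interval both are bounded by $\tilde K$ from \ref{C4} times $|p^n|+|p^m|$ and hence tend to zero by step \textbf{(i)}. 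After these reductions \ref{C1} leaves a Gronwall term in $\sup_{u\le s}\mathbb E|X^{n,\omega'}_{u\wedge\bar\tau}-X^{m,\omega'}_{u\wedge\bar\tau}|^2$, and Gronwall's lemma gives $\mathbb E|X^{n,\omega'}_{\bar\tau}-X^{m,\omega'}_{\bar\tau}|^2\le C(T,\omega')\,I^{n,m}_{R,T}$ with $\lim_{R\to\infty}\limsup_{n,m}I^{n,m}_{R,T}=0$; combining this with step \textbf{(iii)} via $\bar\tau=\tau^n_R\wedge\tau^m_R\wedge\inf\{t:|X^{n,\omega'}_t-X^{m,\omega'}_t|\ge\varepsilon\}$ and Chebyshev yields $\sup_{t\le T}|X^{n,\omega'}_t-X^{m,\omega'}_t|\to 0$ in probability.

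By completeness of $L^2\left(\Omega;\text{C\`adl\`ag}\,([-\tau,T];\mathbb{R}^d)\right)$ under convergence in probability there is a limit $X^{\omega'}$, inheriting c\`adl\`ag regularity, joint measurability in $(t,\omega,\omega')$, and \eqref{moment estimate}. In step \textbf{(v)} I pass to the limit in \eqref{df Xn-}: along a subsequence the sampled segments converge a.s.\ uniformly to $X^{\omega'}_{(s-\tau)^-:s^-}$, so continuity \ref{C3} of $f$ with \ref{C4}-uniform integrability and dominated convergence for the stopped process handle the drift, while the It\^o isometry plus localization handle the diffusion and jump integrals, showing $X^{\omega'}$ solves \eqref{equ1-}; uniqueness follows by It\^o on $|X^{\omega'}_t-Y^{\omega'}_t|^2$ for two solutions, Fatou's lemma, \ref{C1} and Gronwall. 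The main obstacle is precisely step \textbf{(iv)}: because \ref{C1} provides monotonicity only for uniformly bounded paths and $f,g,h$ are merely continuous — not Lipschitz — in the path variable, coefficient differences cannot be estimated directly, and one must interlock the localization by $\tau^n_R$, the non-explosion estimate of step \textbf{(iii)}, and the vanishing mesh remainders of step \textbf{(i)} so that the endpoint-mismatch and sampling corrections are all absorbed before Gronwall is applied, keeping every bound uniform in $n,m$ and integrable in $t$ uniformly in $\omega'$.
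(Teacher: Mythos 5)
Your overall architecture is the paper's own Appendix proof: the explicit Euler scheme \eqref{df Xn-} (your observation that the sampled segment only references grid values from earlier steps, so no per-step fixed point is needed, is exactly the paper's inductive construction), the remainder $p^{n,\omega^\prime}$, the localization $\tau^{n,\omega^\prime}_R$, the five steps \ref{i-}--\ref{v-} with the endpoint-mismatch and sampling corrections absorbed via \ref{C4} before Gronwall, and completeness of $L^2\left(\Omega,\text{C\`adl\`ag}\,([-\tau,T];\mathbb{R}^d)\right)$ under convergence in probability. Your uniqueness sketch also matches, with one gloss: since \ref{C1} is only a local condition, the It\^o/Gronwall argument must be run up to the stopping time $\tau(R,\omega^\prime)$ at which either solution exceeds $R$, with $R\to\infty$ at the end; your mention of Fatou suggests you intend this, but it should be said.

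The genuine gap is in your first paragraph. You claim that It\^o's formula, \ref{C2} and Gronwall yield the moment estimate \eqref{moment estimate} \emph{uniformly in $n$} for the Euler approximations, i.e. $\sup_{t\le T}\mathbb{E}\left\lvert X^{n,\omega^\prime}_t\right\rvert^2\le C_1(T,\omega^\prime)$, and that the limit $X$ then inherits \eqref{moment estimate}. This does not go through for the explicit scheme: It\^o produces the pairing $2\left\langle X^{n,\omega^\prime}_{s^-}, f\left(s,\omega,X^{n,\omega^\prime}_{\kappa(n,(s-\tau):s)},\omega^\prime\right)\right\rangle$, whereas \ref{C2} controls the pairing of $f$ with the \emph{endpoint of its own argument segment}, here $X^{n,\omega^\prime}_{\kappa(n,s)}$; the discrepancy $2\left\langle p^{n,\omega^\prime}_{s}, f(\cdots)\right\rangle$ involves $\lvert f\rvert$ itself, which under the hypotheses is controlled only locally through \ref{C4}. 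Hence a second-moment bound for $X^{n,\omega^\prime}$ is available only after stopping at $\tau^{n,\omega^\prime}_R$, and even then with a correction: the paper's step \ref{ii-} reads $\mathbb{E}\left\lvert X^{n,\omega^\prime}_{\tau^*}\right\rvert^2\le C(T,\omega^\prime)\left(1+I^{n,\omega^\prime}_{T,R}\right)$ with $\lim_{n\to\infty}I^{n,\omega^\prime}_{T,R}=0$, not the clean unlocalized bound you assert (your step (ii) carries the same flaw, though there it is harmless, since $I^{n,\omega^\prime}_{T,R}$ is bounded for fixed $R$ and the Cauchy/non-explosion machinery only needs the localized estimate). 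In particular the sharp constant $\left(1+2\sup_{u\in[-\tau,0]}\mathbb{E}\left\lvert z^{\omega^\prime}_u\right\rvert^2\right)\exp\left(\int_0^t 2K_s(\omega^\prime)\,\de s\right)$ in \eqref{moment estimate} cannot be inherited from the approximations. The paper instead proves \eqref{moment estimate} at the very start, directly for an \emph{arbitrary} strong solution of \eqref{equ1-}, via It\^o with the stopping times $\bar\sigma_{R,\omega^\prime}$, \ref{C2}, \ref{C4}, Gronwall and Fatou; you should repair your argument the same way, deleting the uniform-in-$n$ claim and establishing \eqref{moment estimate} as a separate a priori estimate on the limit solution.
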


\begin{remark}\rm
To the best of our knowledge, the above theorem is new in this full generality. Although the 
idea of the proof is based on previous works, in particular  \cite{liu2015stochastic}, 
\cite{von2010existence}. The most far reaching existence and uniqueness results for stochastic 
delay differential equations driven by jump diffusions have been obtained in 
\cite{albeverio2010existence}, \cite{song2012numerical}, and  
\cite{wu2013wiener} for locally Lipschitz continuous coefficients and in \cite{xu2015existence} 
under slightly more general assumptions. 
Existence and uniqueness of stochastic delay differential with merely monotone coefficients 
driven by diffusive noise have been obtained in \cite{von2010existence}. 
\end{remark}

\begin{proof} (of Theorem \ref{thmA2}) 
First we prove that every strong solution $X$ to equation \eqref{equ1-} satisfies the moment estimate \eqref{moment estimate}. To this end consider the stopping time $\bar{\sigma}_{R,\omega^\prime}:=\mathbf{1}_{\left\lbrace R>\left\lvert z^{\omega^\prime}\right\rvert_{\infty}\right\rbrace}\cdot\inf\left\lbrace t\geq 0: \left\lvert X_t^{\omega^\prime}\right\rvert>R\right\rbrace$. By It\^o's formula, \ref{C2} and \ref{C4}, we have
\begin{align*}
\mathbb{E}\left\lvert X_{t\wedge \bar{\sigma}_{R,\omega^\prime}}^{\omega^\prime}\right\rvert^2&=\mathbb{E}\left\lvert z_0^{\omega^\prime}\right\rvert^2+\mathbb{E}\int_0^{t\wedge \bar{\sigma}_{R,\omega^\prime}}\Big[ 2 \left\langle X_{s^-}^{\omega^\prime}, f(s,\omega, X_{(s-\tau)^-:s^-}^{\omega^\prime},\omega^\prime)\right\rangle+\left\lvert g(s,\omega, X_{(s-\tau)^-:s^-}^{\omega^\prime},\omega^\prime)\right\rvert^2\\&\qquad\qquad \qquad +\int_U \left\lvert h(s,\omega, X_{(s-\tau)^-:s^-}^{\omega^\prime},\omega^\prime, \xi )\right\rvert^2 \nu(\de \xi)\Big]\de s\\&\leq \mathbb{E}\left\lvert z_0^{\omega^\prime}\right\rvert^2+\mathbb{E}\int_0^{t\wedge \bar{\sigma}_{R,\omega^\prime}}K_s(\omega^\prime)\left(1+\int_{-\tau}^{0}\left[ \left\lvert X_{(s+u)^-}^{\omega^\prime}\right\rvert^2+\mathbf{1}_{\left\lbrace u<0\right\rbrace} \left\lvert X_{s+u}^{\omega^\prime}\right\rvert^2 \right]\lambda(\de u)\right)\de s\\&\leq\mathbb{E}\left\lvert z_0^{\omega^\prime}\right\rvert^2+ \int_{0}^{t}K_s(\omega^\prime)  
\left(1 + 2 \sup_{u\in[-\tau,s]}\mathbb{E}\left\lvert X_{u\wedge \bar{\sigma}_{R,\omega^\prime}}^{\omega^\prime}\right\rvert^2\right) 
\de s\, .
\end{align*}
Therefore by Gronwall's lemma and subsequently Fatou's lemma,  
\[1+2
\mathbb{E}\left\lvert X_{t}^{\omega^\prime}\right\rvert^2 
\leq1+2\liminf_{R\to\infty}\mathbb{E}\left\lvert X_{t\wedge \bar{\sigma}_{R,\omega^\prime}}^{\omega^\prime}\right\rvert^2 
\leq \left( 1+2\sup_{u\in [-\tau,0]}\mathbb{E}\left\lvert z_u^{\omega^\prime}\right\rvert^2\right) 
\cdot\exp\left(\int_0^t 2 K_s(\omega^\prime)\de s\right). 
\]
\paragraph*{Existence:} Let us define the remainder 
\[
p^{n,\omega^\prime}_t=X^{n,\omega^\prime}_{\kappa(n,t)}-X^{n,\omega^\prime}_{t^-}, \quad t\in (-\tau,\infty)\, . 
\]
We can then write
\begin{equation}  
\label{Xn=phi(Xn+pn)-}
\begin{split}
X^{n,\omega^\prime}_t
& = z_0^{\omega^\prime}+\int_0^t f\left(s,\omega, X^{n,\omega^\prime}_{(s-\tau)^-:s^-}
 + p^{n,\omega^\prime}_{(s-\tau):s},\omega^\prime\right)\de s\\&\quad+\int_0^t g\left(s,\omega, X^{n,\omega^\prime}_{(s-\tau)^-:s^-}
 + p^{n,\omega^\prime}_{(s-\tau):s},\omega^\prime\right)\de W_s\\&\quad+\int_0^t \int_U h\left(s,\omega, 
    X^{n,\omega^\prime}_{(s-\tau)^-:s^-}
 + p^{n,\omega^\prime}_{(s-\tau):s},\omega^\prime, \xi\right)\tilde{N}(\de s,\de \xi)\, .
\end{split}
\end{equation}   
In the next step define the stopping times 
\[
\tau^{n,\omega^\prime}_R:=\mathbf{1}_{\left\lbrace 3\left\lvert z^{\omega^\prime}\right\rvert_\infty<R\right\rbrace}\cdot\inf\left\lbrace t\ge 0: \left\lvert X^{n,\omega^\prime}_t\right\rvert > 
\frac{R}{3}\right\rbrace 
\]
for given $R > 0$. Then  
\[
\left\lvert p^{n,\omega^\prime}_{t}\right\rvert \le \frac{2R}{3}, \left
\lvert X^{n,\omega^\prime}_{t^-}\right\rvert \le \frac{R}{3},\quad t\in(0,
\tau^{n,\omega^\prime}_R]. 
\]
For $R > 3|z^{\omega^\prime}|_\infty$ the above inequalities extend to all $t\in (-\tau , \tau_R^{n,\omega^\prime}]$ due to the right continuity of $X_t^{n,\omega^\prime}$. 

For the proof of existence however, we will need a control of $\left\lvert X_{t\wedge \tau_R^{n,\omega^\prime}}^{n,\omega^\prime}\right\rvert$  
in the mean square. To this end note that for $R > 3|z^{\omega^\prime}|_\infty$ the stochastic integrals 
\begin{align*}
M_{t\wedge\tau_R^{n,\omega^\prime}}^{n,\omega^\prime} &:= 2\int_0^{t\wedge\tau_R^{n,\omega^\prime}} \langle X_{s-}^{n,\omega^\prime} , g(s, \omega , 
X_{\kappa (n, (s-\tau):s)}^{n,\omega^\prime},\omega^\prime )dW_s\\&\quad + 2\int_0^{t\wedge\tau_R^{n,\omega^\prime}}  
\int_U \langle X_{s-}^{n,\omega^\prime}, h(s, \omega , X_{\kappa (n, (s-\tau):s)}^{n,\omega^\prime},\omega^\prime, \xi )  
\tilde{N} (ds, d\xi ) \, , t\ge 0 
\end{align*}
are well-defined and square-integrable centered martingales. It follows that  
\[\sup_{R} \tau_R^{n,\omega^\prime} = \lim_{R\to\infty} \tau_R^{n,\omega^\prime} = +\infty,\]
hence the stochastic 
integral $M_t:= M_{t\wedge \tau_R^{n,\omega^\prime}}^{n,\omega^\prime}$, $t \le \tau_R^{n,\omega^\prime}$, is a local martingale up 
to time $+\infty$ with localizing sequence $\sigma_m$, $m\ge 1$, say.  

From now on we will fix $T > 0$ and prove the following properties which complete the 
proof of existence on $[0, T]$, and hence on $t\ge 0$, since $T$ was arbitrary.  
\begin{enumerate}[label=(\roman{*})] 
\item \label{i-} For every $t\ge 0$ and $\omega^\prime\in \Omega^\prime$, $\mathbf{1}_{(-\tau,\tau^{n,\omega^\prime}_R]}(t)p^{n,\omega^\prime}_t\to 0$ in probability as $n\to\infty$.
\item \label{ii-} For any stopping time $\tau^* \le T\wedge\tau^{n,\omega^\prime}_R$, $R$ as in \ref{i-}, we have
$\mathbb{E}\left\lvert X^{n,\omega^\prime}_{\tau^*}\right\rvert^2\le C(T,\omega^\prime)\left(1+I^{n,\omega^\prime}_{T,R}\right)$, for some upper bound $I^{n,\omega^\prime}_{T,R}$ satisfying $\lim_{n\to\infty}I^{n,\omega^\prime}_{T,R}=0$. 
\item \label{iii-} $\lim_{R\to \infty}\limsup_{n\to\infty}\mathbb{P}\left\lbrace \tau^{n,\omega^\prime}_R < T\right\rbrace=0$.
\item \label{iv-} $\forall \varepsilon > 0, \lim_{n,m\to\infty}\mathbb{P}\left\lbrace \sup_{t\in[0,T]}\left\lvert X^{n,\omega^\prime}_t-X^{m,\omega^\prime}_t\right\rvert >\varepsilon\right\rbrace=0$.
\item \label{v-}  $\exists X: \forall \varepsilon>0, \lim_{n\to\infty}\mathbb{P}\left\lbrace \sup_{t\in[0,T]}\left\lvert X^{n,\omega^\prime}_t-X_t^{\omega^\prime}\right\rvert >\varepsilon\right\rbrace=0$ and $X$ is a strong solution of equation \eqref{equ1-} on $[0, T]$.
\end{enumerate}

\paragraph*{Proof of \ref{i-}:} Since $z^{\omega^\prime}$ is c\`adl\`ag w.r.t. time, $\mathbf{1}_{(-\tau,0]}(t)p^{n,\omega^\prime}_t\to 0$ almost surely. Using \eqref{df Xn-} and Hypothesis \ref{hyp1-}, we have for every $t>0$
\begin{align*}
\MoveEqLeft[1]\mathbb{P}\left\lbrace \left\lvert p^{n,\omega^\prime}_t\right\rvert\ge \varepsilon,0< t\le \tau^{n,\omega^\prime}_R\right\rbrace\\ 
& \le \mathbb{P}\left\lbrace \int_{\kappa(n,t)}^t \sup_{\left\lvert x\right\rvert_{L^\infty}\le R}\left\lvert f(s,\omega, x_{-\tau:0},\omega^\prime)\right\rvert \de s\ge \varepsilon/3\right\rbrace \\ 
& \quad +\mathbb{P}\left\lbrace \left\lvert\int_{\kappa(n,t)}^t g\left(s,\omega, X^{n,\omega^\prime}_{\kappa(n,(s-\tau):s)},\omega^\prime\right)\de W_s\right\rvert\ge \varepsilon/3 , t\le\tau^{n,\omega^\prime}_R
\right\rbrace \\ 
& \quad +\mathbb{P}\left\lbrace \left\lvert\int_{\kappa(n,t)}^{t^-}\int_U h\left(s,\omega, X^{n,\omega^\prime}_{\kappa(n,(s-\tau):s)},\omega^\prime, \xi\right)\tilde{N}(\de s,\de \xi)\right\rvert\ge \varepsilon/3, t\le\tau^{n,\omega^\prime}_R\right\rbrace \\ 
& \le \mathbb{P}\left\lbrace\int_{\kappa(n,t)}^t \tilde{K}_s(R,\omega^\prime)\de s 
\ge \varepsilon/3\right\rbrace \\ 
& \quad +\frac{9}{\varepsilon^2} \left( \mathbb{E}\left\lvert\int_{\kappa(n,t)}^t 
\mathbf{1}_{\left\lbrace s\le\tau^{n,\omega^\prime}_R\right\rbrace} g \left(s,\omega, X^{n,\omega^\prime}_{\kappa(n,(s-\tau):s)},\omega^\prime\right)\de W_s\right\rvert^2  
\right) \\ 
& \quad +\frac{9}{\varepsilon^2}\mathbb{E} \left( \left\lvert \int_{\kappa(n,t)}^{t^-} 
\int_U\mathbf{1}_{\left\lbrace s\le\tau^{n,\omega^\prime}_R 
\right\rbrace} h\left(s,\omega, X^{n,\omega^\prime}_{\kappa(n,(s-\tau):s)},\omega^\prime, \xi\right) 
\tilde{N}(\de s,\de \xi)\right\rvert^2  \right) \\ 
& \le \frac{3}{\varepsilon}\int_{\kappa(n,t)}^t \tilde{K}_s(R,\omega^\prime)\de s 
+\frac{9}{\varepsilon^2}\mathbb{E} \left( \int_{\kappa(n,t)\wedge  
\tau^{n,\omega^\prime}_R}^{t\wedge \tau^{n,\omega^\prime}_R}  \tilde{K}_s(R,\omega^\prime)\de s \right) 
\le \left(\frac{3}{\varepsilon} + \frac{9}{\varepsilon^2} \right) 
\int_{\kappa(n,t)}^t \tilde{K}_s (R,\omega^\prime)\, \de s\, , 
\end{align*}
so
\[
\limsup_{n\to \infty} 
\mathbb{P}\left\lbrace \left\lvert p^{n,\omega^\prime}_t\right\rvert\ge \varepsilon, -\tau<t\le 
\tau^{n,\omega^\prime}_R\right\rbrace = 0
\]
which implies \ref{i-}.
\paragraph*{Proof of \ref{ii-}:} Using It\^o's formula equation \eqref{Xn=phi(Xn+pn)-} implies that 
\begin{equation*} 
\begin{aligned} 
\left\lvert X_t^{n,\omega^\prime}\right\rvert^2 & = \left\lvert z_0^{\omega^\prime}\right\rvert^2 
+ \int_0^t 
\bigg[ 2\left\langle X^{n,\omega^\prime}_{s^-},f\left(s, \omega, 
X^{n,\omega^\prime}_{\kappa(n, (s-\tau):s)},\omega^\prime\right) \right\rangle 
+ \left\lvert g\left(s,\omega,X^{n,\omega^\prime}_{\kappa (n,(s-\tau):s)},\omega^\prime\right)\right\rvert^2\bigg]\de s \\ 
& \quad
+ \int_0^t\int_U \left\lvert h\left(s,\omega,X^{n,\omega^\prime}_{\kappa (n,(s-\tau):s)},\omega^\prime, \xi 
  \right)\right\rvert^2 N(\de s,\de \xi)
+ M_t \, . 
\end{aligned}
\end{equation*}
For any stopping time $\tau^*\le t\wedge\tau_R^{n,\omega^\prime}$ we then have 
\begin{equation*} 
\begin{aligned}
\MoveEqLeft[1]\mathbb{E}\left\lvert X^{n,\omega^\prime}_{\tau^*\wedge\sigma_m}\right\rvert^2\\
& \le \mathbb{E}\left\lvert z_0^{\omega^\prime} \right\rvert^2+\mathbb{E}\int_0^{\tau^*\wedge\sigma_m} 
\bigg[2\left\langle X^{n,\omega^\prime}_{s-},f\left(s,\omega,X^{n,\omega^\prime}_{\kappa (n,(s-\tau):s)},\omega^\prime
\right)\right\rangle 
+ \left\lvert g\left(s,\omega,X^{n,\omega^\prime}_{\kappa (n,(s-\tau):s)},\omega^\prime\right)\right\rvert^2\\ 
& \qquad\qquad\qquad\qquad\qquad 
+\int_U \left\lvert h\left(s,\omega,X^{n,\omega^\prime}_{\kappa (n,(s-\tau):s)},\omega^\prime,
\xi\right)\right\rvert^2 \nu(\de \xi)\bigg]\de s \\ 
& \le \mathbb{E}\left\lvert z_0^{\omega^\prime} \right\rvert^2 
+ \mathbb{E}\int_0^{\tau^*\wedge\sigma_m} 2\left\langle p^{n,\omega^\prime}_{s-},f\left(s, \omega,  
X^{n,\omega^\prime}_{\kappa (n,(s-\tau):s)},\omega^\prime \right) \right\rangle\de s 
\\ 
& \quad + \mathbb{E}\int_0^{\tau^*\wedge\sigma_m} K_s(\omega^\prime)\left(1  + 
\int_{-\tau}^0 \left(\left\lvert X^{n,\omega^\prime}_{(s+u)^-}+p^{n,\omega^\prime}_{s+u}\right\rvert^2 
+ \mathbf{1}_{\left\lbrace u<0\right\rbrace}\left\lvert X^{n,\omega^\prime}_{s+u}  
+ p^{n,\omega^\prime}_{(s+u)^+}\right\rvert^2\right) \lambda(\de u)\right)\de s
\\
& \le  C(T,\omega^\prime)+\underbrace{\mathbb{E}\int_0^T \left[2\tilde{K}_s(R,\omega^\prime) 
\left\lvert p^{n,\omega^\prime}_s\right\rvert 
+ 2K_s(\omega^\prime) \int_{-\tau}^0\left(\left\lvert p^{n,\omega^\prime}_{s+u}\right\rvert^2 
+ \mathbf{1}_{\left\lbrace u<0\right\rbrace}\left\lvert 
p^{n,\omega^\prime}_{(s+u)^+}\right\rvert^2\right)\lambda(\de u)\right]\mathbf{1}_{\left\lbrace s\le  
T\wedge \tau^{n,\omega^\prime}_R\right\rbrace}\de s}_{ =: I^{n,\omega^\prime}_{T,R}}  \\ 
& \quad + \int_0^t 4K_s(\omega^\prime)\sup_{u\in[0,s]}\mathbb{E}\left\lvert X^{n,\omega^\prime}_{u\wedge\tau^*}
\right\rvert^2\de s  \, . 
\end{aligned}
\end{equation*} 
Taking the limit $m\to\infty$ yields by Fatou's lemma that 
\begin{equation} 
\label{theA2:eq1} 
\begin{aligned}
\mathbb{E}\left\lvert X^{n,\omega^\prime}_{\tau^*}\right\rvert^2
& \le  C(T,\omega^\prime) + I_{T,R}^{n,\omega^\prime} + \int_0^t 4K_s(\omega^\prime)\sup_{u\in[0,s]}\mathbb{E}\left\lvert  
   X^{n,\omega^\prime}_{u\wedge\tau^*}\right\rvert^2\de s 
\end{aligned}
\end{equation} 
with 
\[
\lim_{n\to\infty}I^{n,\omega^\prime}_{T,R}=0\, . 
\]
Setting $\tau^* := t\wedge\tau_R^{n,\omega^\prime}$, $t\in [0, T]$, and using Gronwall's inequality, we 
obtain that 
\[ 
\sup_{t\in[0,T]}\mathbb{E}\left\lvert X^{n,\omega^\prime}_{t\wedge \tau_R^{n,\omega^\prime}}\right\rvert^2 
\le C(T,\omega^\prime)\left(1+I^{n,\omega^\prime}_{T,R}\right)
\] 
and for any stopping time $\tau^*\le T\wedge\tau_R^{n,\omega^\prime}$ \eqref{theA2:eq1} now implies that  
$$ 
\mathbb{E}  |X^{n,\omega^\prime}_{\tau^*}|^2 \le C(T,\omega^\prime) \left( 1 + I_{T,R}^{n,\omega^\prime}\right)\, . 
$$ 
Note that $C(T,\omega^\prime)$ may differ from line to line, but always can be chosen to be an increasing 
function. 
 
\paragraph*{Proof of \ref{iii-}:} 
Let 
 \[
 \tau^*=T\wedge\tau^{n,\omega^\prime}_R\wedge \inf\left\lbrace t\ge 0:\left\lvert X^{n,\omega^\prime}_t\right\rvert 
 \ge a\right\rbrace\, . 
 \]
\ref{ii-} then implies that 
\[
\mathbb{P}\left\lbrace \sup_{t\in[0,T\wedge\tau^{n,\omega^\prime}_R]}\left\lvert X^{n,\omega^\prime}_t\right\rvert 
\ge a\right\rbrace\le \frac{1}{a^2}\mathbb{E}\left\lvert X^{n,\omega^\prime}_{\tau^*} 
\right\rvert^2\le\frac{C(T,\omega^\prime)}{a^2}\left(1+I^{n,\omega^\prime}_{T,R}\right) 
\]
for any $a > 0$. In particular, 
\begin{align*}
\limsup_{R\to\infty}\limsup_{n\to \infty}\,  
& \mathbb{P}\left\lbrace 
\sup_{t\in[0,\tau^{n,\omega^\prime}_R]}\left\lvert X^{n,\omega^\prime}_t\right\rvert\ge \frac{R}{4};\tau^{n,\omega^\prime}_R 
  < T \right\rbrace 
\le \limsup_{R\to\infty}\limsup_{n\to \infty}\mathbb{P}\left\lbrace 
\sup_{t\in[0,T\wedge\tau^{n,\omega^\prime}_R]}\left\lvert X^{n,\omega^\prime}_t\right\rvert \ge \frac{R}{4}
\right\rbrace \\ 
& \le\limsup_{R\to\infty}\limsup_{n\to\infty} 
\frac{16C(T,\omega^\prime)\left(1+ I^{n,\omega^\prime}_{T,R}\right)}{R^2} = 0.
\end{align*} 
It follows that  
 \begin{align*} 
\limsup_{R\to\infty}\limsup_{n\to \infty}\mathbb{P}\left\lbrace \tau^{n,\omega^\prime}_R 
< T\right\rbrace 
\le\limsup_{R\to\infty}\limsup_{n\to \infty}\left[\mathbb{P}\left\lbrace  
\sup_{t\in[0,\tau^{n,\omega^\prime}_R]}\left\lvert X^{n,\omega^\prime}_t\right\rvert\ge \frac{R}{4};\tau^{n,\omega^\prime}_R 
< T \right\rbrace+\mathbb{P}\left\lbrace 3\left\lvert z\right\rvert_\infty>R\right\rbrace\right]=0
 \end{align*}
which completes the proof of \ref{iii-}.  

\paragraph*{Proof of \ref{iv-}:} Let $\tau^{n,m,\omega^\prime}_R:=T\wedge 
\tau^{n,\omega^\prime}_R\wedge \tau^{m,\omega^\prime}_R$. Using It\^o's formula, we have for any stopping time 
$\bar{\tau} \le t\wedge \tau^{n,m,\omega^\prime}_R$ that
\begin{align*}
\MoveEqLeft[1]
\mathbb{E}\left\lvert X^{n,\omega^\prime}_{\bar{\tau}}- X^{m,\omega^\prime}_{\bar{\tau}}\right\rvert^2 \\ 
& \le  
\mathbb{E}\int_0^{\bar{\tau}} 2\left\langle X^{n,\omega^\prime}_{s-}-X^{m,\omega^\prime}_{s-}, 
f\left(s,\omega, X^{n,\omega^\prime}_{\kappa (n,(s-\tau):s)},\omega^\prime\right)-f\left(s,\omega, 
X^{m,\omega^\prime}_{\kappa(n,(s-\tau):s)},\omega^\prime\right)\right\rangle\de s \\ 
& \quad + \mathbb{E}\int_0^{\bar{\tau}} \left\lvert g\left(s,\omega, 
X^{m,\omega^\prime}_{\kappa(n,(s-\tau):s)},\omega^\prime\right) - g\left(s,\omega, 
X^{m,\omega^\prime}_{\kappa(n,(s-\tau):s)},\omega^\prime\right)\right\rvert^2\de s \\ 
& \quad + \mathbb{E}\int_0^{\bar{\tau}}\int_U \left\lvert h\left(s,\omega, 
X^{m,\omega^\prime}_{\kappa(n,(s-\tau):s)},\omega^\prime, \xi\right)-h\left(s,\omega, 
X^{m,\omega^\prime}_{\kappa(n,(s-\tau):s)},\omega^\prime, \xi\right)\right\rvert^2\nu(\de \xi)\de s
\end{align*}
Hypothesis \ref{hyp1-} implies
\begin{equation} 
\label{ineq1-}
\begin{aligned} 
\mathbb{E}\Big\lvert X^{n,\omega^\prime}_{\bar{\tau}} & - X^{m,\omega^\prime}_{\bar{\tau}}\Big\rvert^2 \\ 
& \le \mathbb{E}\int_0^{\bar{\tau}} 2\left\langle p^{n,\omega^\prime}_{s-}-p^{m,\omega^\prime}_{s-}, 
f\left(s,\omega, X^{n,\omega^\prime}_{\kappa(n,(s-\tau):s)},\omega^\prime\right) 
-f \left(s, \omega,X^{m,\omega^\prime}_{\kappa (n,(s-\tau):s)},\omega^\prime\right)\right\rangle\de s \\ 
& \quad + \mathbb{E}\int_0^{\bar{\tau}} L_s (R,\omega^\prime) \int_{-\tau}^0 \left(\left\lvert 
X^{n,\omega^\prime}_{\kappa (n,s+u)} - X^{m,\omega^\prime}_{\kappa (n,s+u)} \right\rvert^2 
+\mathbf{1}_{\left \lbrace u<0\right\rbrace}\left\lvert X^{n,\omega^\prime}_{\kappa (n,(s+u)^+)}  
- X^{m,\omega^\prime}_{\kappa (n,(s+u)^+)}\right\rvert^2\right)\lambda(\de u)\de s \\
& \le \mathbb{E}\Bigg( \int_0^T \mathbf{1}_{[0,\tau^{n,m,\omega^\prime}_R]}(s) 
\Bigg[ 4\tilde{K}_s(R,\omega^\prime)\left(\left\lvert p^{n,\omega^\prime}_s\right\rvert+\left\lvert 
p^{m,\omega^\prime}_s\right\rvert\right) \\ 
& \qquad\qquad\qquad\qquad + 3L_s(R,\omega^\prime)\int_{-\tau}^0\left(\left\lvert 
p^{n,\omega^\prime}_{s+u}\right\rvert^2+\left\lvert p^{m,\omega^\prime}_{s+u}\right\rvert^2+\mathbf{1}_{\left 
\lbrace u<0\right\rbrace}\left(\left\lvert p^{n,\omega^\prime}_{(s+u)^+}\right\rvert^2+\left\lvert 
p^{m,\omega^\prime}_{(s+u)^+}\right\rvert^2\right)\right)\lambda(\de u)\Bigg]\de s \Bigg) \\ 
&
\quad+\int_0^t 6L_s (R,\omega^\prime) \sup_{u\in[0,s]}\mathbb{E}\left\lvert X^{n,\omega^\prime}_{u\wedge\tau^{n,m,\omega^\prime}_R} 
- X^{m,\omega^\prime}_{u\wedge\tau^{n,m,\omega^\prime}_R}\right\rvert^2\de s \\ 
& = I^{n,m,\omega^\prime}_{R,T}+\int_0^t6L_s(R,\omega^\prime) \sup_{u\in[0,s]}\mathbb{E}\left\lvert 
X^{n,\omega^\prime}_{u\wedge\tau^{n,m,\omega^\prime}_R}- X^{m,\omega^\prime}_{u\wedge\tau^{n,m,\omega^\prime}_R}\right\rvert^2\de s
\end{aligned}
\end{equation} 
By setting $\bar{\tau}:= t\wedge\tau^{n,m,\omega^\prime}_R$, $t\in [0,T]$, and using Gronwall's inequality, 
we get
\begin{equation}
\sup_{t\in[0,T]}\mathbb{E}\left\lvert X^{n,\omega^\prime}_{t\wedge\tau^{n,m,\omega^\prime}_R} 
- X^{m,\omega^\prime}_{t\wedge\tau^{n,m,\omega^\prime}_R}\right\rvert^2\le \exp\left(\int_0^T 6L_s(R,\omega^\prime)\de s 
\right)I^{n,m,\omega^\prime}_{R,T}.
\end{equation}
 Substituting this bound into the right hand side of \eqref{ineq1-} implies 
\[\mathbb{E}\left\lvert X^{n,\omega^\prime}_{\bar{\tau}}- X^{m,\omega^\prime}_{\bar{\tau}}\right\rvert^2 
\le  C(T,R,\omega^\prime)I^{n,m,\omega^\prime}_{R,T}\]
By setting 
\[ 
\bar{\tau}:=\tau^{n,m,\omega^\prime}_R\wedge \inf\left\lbrace t\ge 0:\left\lvert X^{n,\omega^\prime}_t 
-X^{m,\omega^\prime}_t\right\rvert\ge \varepsilon\right\rbrace, 
\]
we have
\begin{align*}
\mathbb{P} & \left\lbrace \sup_{t\in[0,T]}\left\lvert X^{n,\omega^\prime}_t-X^{m,\omega^\prime}_t\right\rvert\ge 
\varepsilon\right\rbrace \\ 
& \qquad \le \mathbb{P}\left\lbrace T>\tau^{n,\omega^\prime}_R\right\rbrace 
+\mathbb{P}\left\lbrace T>\tau^{m,\omega^\prime}_R\right\rbrace +
\mathbb{P}\left\lbrace \sup_{t\in[0,\tau^{n,m,\omega^\prime}_R]}\left\lvert X^{n,\omega^\prime}_t-X^{m,\omega^\prime}_t\right\rvert\ge 
\varepsilon\right\rbrace \\ 
& \qquad \le\mathbb{P}\left\lbrace T>\tau^{n,\omega^\prime}_R\right\rbrace 
+ \mathbb{P}\left\lbrace T>\tau^{m,\omega^\prime}_R\right\rbrace 
+ \frac{1}{\varepsilon^2}\mathbb{E}\left\lvert X^{n,\omega^\prime}_{\bar{\tau}}- 
X^{m,\omega^\prime}_{\bar{\tau}}\right\rvert^2 \\ 
& \qquad \le \mathbb{P}\left\lbrace T>\tau^{n,\omega^\prime}_R\right\rbrace 
+ \mathbb{P}\left\lbrace T>\tau^{m,\omega^\prime}_R\right\rbrace 
+\frac{C(T,R,\omega^\prime)}{\varepsilon^2} I^{n,m,\omega^\prime}_{R,T}\, . 
\end{align*}
\ref{i-} and dominated convergence now implies that 
\[
\limsup_{n,m\to \infty}I^{n,m,\omega^\prime}_{R,T}=0 
\]
and using \ref{iii-}, we get 
\begin{align*}
\limsup_{n,m\to\infty}\mathbb{P} 
& \left\lbrace \sup_{t\in[0,T]}\left\lvert X^{n,\omega^\prime}_t-X^{m,\omega^\prime}_t\right\rvert\ge 
\varepsilon\right\rbrace \\ 
& 
\le  \lim_{R\to\infty}\limsup_{n,m\to\infty} 
\left[\mathbb{P}\left\lbrace T>\tau^{n,\omega^\prime}_R\right\rbrace 
+ \mathbb{P}\left\lbrace T > \tau^{m,\omega^\prime}_R\right\rbrace+\frac{C(T,R, \omega^\prime)}{\varepsilon^2} 
I^{n,m }_{R,T}\right]=0\, .
\end{align*}
So \ref{iv-} is obtained. 

\paragraph*{Proof of \ref{v-}:} Since the space $L^2\left(\Omega, 
\text{C\`adl\`ag}\,([-\tau,T],\mathbb{R}^d)\right)$ is complete w.r.t. convergence in 
probability, \ref{iv-} yields that there exists $X^{\omega^\prime}\in L^2\left(\Omega, \text{C\`adl\`ag}\,
([-\tau,T],\mathbb{R}^d)\right)$ such that
\[ 
\lim_{n\to\infty}\mathbb{P}\left\lbrace \sup_{t\in[0,T]}\left\lvert X^{n,\omega^\prime}_t 
- X_t^{\omega^\prime}\right\rvert\ge \varepsilon\right\rbrace=0. 
\]
We have to show that all terms of equation \eqref{Xn=phi(Xn+pn)-} for a subsequence of 
$n\in\mathbb{N}$ converge almost surely to the terms of equation \eqref{equ1-}. We have
\begin{align*}
\lim_{n\to\infty}\mathbb{P} & \left\lbrace \sup_{t\in[0,T]}\left\lvert X^{n,\omega^\prime}_{\kappa(n,t)} 
  -X_{t^-}^{\omega^\prime}\right\rvert\ge \varepsilon\right\rbrace \\
& \qquad \le \lim_{n\to\infty}\mathbb{P}\left\lbrace \sup_{t\in[0,T]}\left\lvert 
X^{n,\omega^\prime}_{\kappa(n,t)}-X_{\kappa(n,t)}^{\omega^\prime}\right\rvert \ge \varepsilon/2\right\rbrace  
 +\lim_{n\to\infty}\mathbb{P}\left\lbrace \sup_{t\in[0,T]}\left\lvert 
X_{\kappa(n,t)}^{\omega^\prime}-X_{t^-}^{\omega^\prime}\right\rvert\ge \varepsilon/2\right\rbrace=0\, . 
\end{align*} 
We can find a subsequence, say 
$\left\lbrace n_l\right\rbrace_{l\in\mathbb{N}}$, such that as $l\to\infty$,
\[
\sup_{t\in [0,T]}\left\lvert X^{n_l,\omega^\prime}_{\kappa(n_l,t)}-X_{t^-}^{\omega^\prime}\right\rvert\to 0 
\quad \mathbb{P}-a.s.
\]
for all $\omega^\prime$ in a subset $\Omega^\prime_0$ of full $\mathbb{P}^\prime$-measure. Now let us define
\[
S(t,\omega^\prime):=\sup_{l\in\mathbb{N}} \left\lvert X^{n_l,\omega^\prime}_{\kappa(n_l,t)}\right\rvert \, , 
\]
then
\[
\sup_{t\in [0,T]}S(t,\omega^\prime)<\infty\quad \mathbb{P}-a.s. 
\]
for all $\omega^\prime$ in $\Omega^\prime_0$. So by using \ref{C3}, \ref{C4} and dominated convergence, we obtain for all $\omega^\prime$ in $\Omega^\prime_0$ that
\[
\lim_{l\to\infty}\int_0^t f\left( s,\omega,  X^{n_l,\omega^\prime}_{\kappa(n_l,(s-\tau):s)},\omega^\prime\right)\de s 
= \int_0^t f\left(s,\omega,X_{(s-\tau)^-:s^-}^{\omega^\prime},\omega^\prime\right)\de s\quad \mathbb{P}-a.s. 
\]
Let $\tau (R):=\inf\left\lbrace t\ge 0:S(t)>R\right\rbrace\wedge T$. For all $t\in [0,T]$ and all $\omega^\prime\in\Omega^\prime_0$,  
we have by dominated convergence that
\begin{align*}
\MoveEqLeft[3]\lim_{l\to\infty}\mathbb{E}\left\lvert \int_0^{t\wedge\tau (R)}\left[g\left(s,
\omega,X^{n_l,\omega^\prime}_{\kappa(n_l,(s-\tau):s)},\omega^\prime\right)-g\left( s,\omega, 
X_{(s-\tau)^-:s^-}^{\omega^\prime},\omega^\prime\right)\right]\de W_s\right\rvert^2 \\ 
& =\lim_{l\to\infty}\mathbb{E}\int_0^t \mathbf{1}_{\left\lbrace s\le \tau (R) 
\right\rbrace}\left\lvert g\left(s,\omega,X^{n_l,\omega^\prime}_{\kappa(n_l,(s-\tau):s)},\omega^\prime\right) 
-g\left( s,\omega,X_{(s-\tau)^-:s^-}^{\omega^\prime},\omega^\prime\right)\right\rvert^2\de s=0
\end{align*}
So
\begin{align*}
\MoveEqLeft[3]\mathbb{P}\left\lbrace \left\lvert \int_0^t\left[g\left(s,\omega, 
X^{n_l,\omega^\prime}_{\kappa(n_l,(s-\tau):s)},\omega^\prime\right)-g\left( s,\omega,X_{(s-\tau)^-:s^-}^{\omega^\prime},\omega^\prime\right)\right] 
\de W_s\right\rvert>\varepsilon\right\rbrace \\ 
& \le \mathbb{P}\left\lbrace \left\lvert \int_0^{t\wedge\tau (R)}\left[g\left(s,\omega, 
X^{n_l,\omega^\prime}_{\kappa(n_l,(s-\tau):s)},\omega^\prime\right)-g\left( s,\omega,X_{(s-\tau)^-:s^-}^{\omega^\prime},\omega^\prime\right)\right] 
\de W_s\right\rvert>\varepsilon\right\rbrace\\&\quad+\mathbb{P}\left\lbrace t>\tau (R) 
\right\rbrace.
\end{align*}
Fix sufficiently large $R$ such that the second term on the right hand side is less than 
$\delta>0$, then taking the limit $l\to\infty$ implies
\[ 
\lim_{l\to \infty}\mathbb{P} 
\left\lbrace \left\lvert \int_0^t\left[g\left(s,\omega,X^{n_l,\omega^\prime}_{\kappa(n_l,
(s-\tau):s)},\omega^\prime\right)-g\left( s,\omega,X_{(s-\tau)^-:s^-}^{\omega^\prime},\omega^\prime\right)\right] 
\de W_s\right\rvert > 
\varepsilon\right\rbrace\le \delta\]
where  $\delta>0$ is arbitrary. Therefore 
\[ 
\int_0^tg\left(s,\omega,X^{n_l,\omega^\prime}_{\kappa(n_l,(s-\tau):s)},\omega^\prime\right)\de W_s\to \int_0^t 
g \left(s,\omega,X_{(s-\tau)^-:s^-}^{\omega^\prime},\omega^\prime\right)\de W_s\quad \text {in probability} 
\]
The same argument implies
\[ 
\int_0^t\int_U h\left(s,\omega,X^{n_l,\omega^\prime}_{\kappa(n_l,(s-\tau):s)},\omega^\prime, \xi\right) 
\tilde{N}(\de s,\de \xi)\to \int_0^t\int_U h\left(s,\omega,X_{(s-\tau)^-:s^-}^{\omega^\prime},\omega^\prime, \xi\right) 
\tilde{N}(\de s,\de \xi)\quad \text {in probability} 
\]
and for some subsequence $n_{l_k}$ the above convergences are $\mathbb{P}-a.s$. 
Therefore $X$ is a solution of equation \eqref{equ1-} on $[0, T]$. Since $X^{n,\omega^\prime}_t$ is measurable w.r.t. $(t,\omega,\omega^\prime)\in [-\tau,\infty[\times \Omega\times \Omega^\prime$, $X_t^{\omega^\prime}$ is also measurable.

\paragraph*{Uniqueness:} Let $X$ and $Y$ be two solutions of equation \eqref{equ1-} and define 
\[
\tau(R,\omega^\prime):=\inf\left\lbrace t\ge 0; \left\lvert X_t^{\omega^\prime}\right\rvert>R \text{  or  } \left\lvert 
Y_t^{\omega^\prime}\right\rvert>R\right\rbrace \, . 
\]
We have
\begin{align*}
\mathbb{E}\left\lvert X_{t\wedge \tau(R,\omega^\prime)}^{\omega^\prime}-Y_{t\wedge \tau(R,\omega^\prime)}^{\omega^\prime}\right\rvert^2 
& = \mathbb{E}\int_0^{t\wedge \tau(R,\omega^\prime)} \big[2\left\langle X_{s^-}^{\omega^\prime}-Y_{s^-}^{\omega^\prime}, 
f\left( s, \omega,X_{(s-\tau)^-:s^-}^{\omega^\prime},\omega^\prime\right)-f\left(s, \omega , Y_{(s-\tau)^-:s^-}^{\omega^\prime},\omega^\prime
   \right)\right\rangle  \\ 
& \qquad\qquad\qquad+\left\lvert g\left( s,\omega,X_{(s-\tau)^-:s^-}^{\omega^\prime},\omega^\prime\right)  
   - g\left(s,\omega,Y_{(s-\tau)^-:s^-}^{\omega^\prime},\omega^\prime\right)\right\rvert^2\big]\de s \\ 
& \quad +\mathbb{E}\int_0^{t\wedge \tau(R,\omega^\prime)}\int_U \left\lvert h\left( s,\omega , 
    X_{(s-\tau)^-:s^-}^{\omega^\prime},\omega^\prime, \xi\right)-h\left(s,\omega,Y_{(s-\tau)^-:s^-}^{\omega^\prime},\omega^\prime, \xi\right)\right   
 \rvert^2\nu(\de \xi)\de s \\ 
& \le \mathbb{E}\int_0^{t\wedge \tau(R,\omega^\prime)}L_s(R,\omega^\prime)\int_{-\tau}^0\left(  \left  
 \lvert X_{(s+u)^-}^{\omega^\prime}-Y_{(s+u)^-}^{\omega^\prime}\right\rvert^2+\mathbf{1}_{\left\lbrace u<0\right\rbrace}  
 \left\lvert X_{s+u}^{\omega^\prime}-Y_{s+u}^{\omega^\prime}\right\rvert^2\right)\lambda(\de u)\de s
\end{align*} 
so that 
\begin{align*}
\sup_{s\le t}\mathbb{E}\left\lvert X_{s\wedge \tau(R)}^{\omega^\prime}-Y_{s\wedge \tau(R)}^{\omega^\prime}\right\rvert^2 
& \le \int_0^t2L_s(R,\omega^\prime)\sup_{u\le s}\mathbb{E}\left\lvert X_{u\wedge \tau(R)}^{\omega^\prime} 
 -Y_{u\wedge \tau(R)}^{\omega^\prime}\right\rvert^2\de s\, . 
\end{align*}
Gronwall's lemma now implies that 
\[
\sup_{s\le T}\mathbb{E}\left\lvert X_{s\wedge \tau(R)}^{\omega^\prime}-Y_{s\wedge \tau(R)}^{\omega^\prime}\right\rvert^2=0, 
\] 
so
\[
\mathbb{P}\left\lbrace X_s^{\omega^\prime}=Y_s^{\omega^\prime}\right\rbrace = \lim_{R\to\infty} 
\left[\mathbb{P}\left\lbrace X_s^{\omega^\prime} =Y_s^{\omega^\prime},  
s\le \tau(R)\right\rbrace+\mathbb{P}\left\lbrace s>\tau(R)\right\rbrace\right]=1
\]
and the uniqueness is proved.
\end{proof}
\end{appendix}

\subsection*{Acknowledgements}

We would like to thank the referee as well as the associate editor for several constructive comments and 
suggestions. We are in particular thankful for pointing out an unjustified application of the moment 
estimate (Lemma \ref{boundedness}) in the first version of the proof of Theorem \ref{existunique}, that 
could be corrected by changing the construction of a solution of \eqref{equ2} to a semi-implicit Euler 
scheme.  
 
\bibliographystyle{plain}

\bibliography{biblio}

\end{document}